\titleformat{\subsubsection}[runin]
	{\normalfont\normalsize\bfseries\filcenter}{\thesubsubsection.}{1 ex}{}
\newcommand{\emailaddr}[1]{\href{mailto:#1}{\texttt{#1}}}
\declaretheoremstyle[headpunct={ --- },headfont=\normalfont\itshape]{myremark}
\declaretheoremstyle[bodyfont=\normalfont]{mydefinition}
\declaretheorem[name=Theorem]{Thm}
\declaretheorem[within=section,name=Lemma]{Lem}
\declaretheorem[sibling=Lem,name=Proposition]{Prop}
\declaretheorem[sibling=Lem,name=Corollary]{Cor}
\declaretheorem[style=myremark,sibling=Lem,name=Remark]{Rem}
\declaretheorem[style=mydefinition,sibling=Lem,name=Definition]{Def}
\declaretheorem[style=mydefinition,sibling=Lem,name=Notation]{Not}
\declaretheorem[style=mydefinition,sibling=Lem,name=Example]{Ex}
\declaretheorem[style=mydefinition,sibling=Lem,name=Problem]{Prob}
\newcommand{\Tr}{\operatorname{Tr}}
\newcommand{\card}{\#}
\newcommand{\calA}{\mathcal{A}}
\newcommand{\calB}{\mathcal{B}}
\newcommand{\calC}{\mathcal{C}}
\newcommand{\calP}{\mathcal{P}}
\newcommand{\calQ}{\mathcal{Q}}
\newcommand{\calS}{\mathcal{S}}
\newcommand{\calV}{\mathcal{V}}
\newcommand{\calX}{\mathcal{X}}
\newcommand{\calY}{\mathcal{Y}}
\newcommand{\calZ}{\mathcal{Z}}
\newcommand{\fraks}{\mathfrak{s}}
\newcommand{\RR}{\ensuremath{\mathbb{R}}}
\newcommand{\CC}{\ensuremath{\mathbb{C}}}
\newcommand{\NN}{\ensuremath{\mathbb{N}}}
\newcommand{\KK}{\ensuremath{\mathbb{K}}}
\DeclareMathOperator*{\Id}{I}
\DeclareMathOperator*{\Van}{V}
\providecommand*{\diff}%
        {\@ifnextchar^{\DIfF}{\DIfF^{}}}
\def\DIfF^#1{%
        \mathop{\mathrm{\mathstrut d}}%
                \nolimits^{#1}\gobblespace
}
\def\gobblespace{%
        \futurelet\diffarg\opspace}
\def\opspace{%
        \let\DiffSpace\!%
        \ifx\diffarg(%
                \let\DiffSpace\relax
        \else
                \ifx\diffarg\[%
                        \let\DiffSpace\relax
                \else
                        \ifx\diffarg\{%
                                \let\DiffSpace\relax
                        \fi\fi\fi\DiffSpace}
\begin{document}
\title{The Algebraic Approach to Phase Retrieval, and\\ 
Explicit Inversion at the Identifiability Threshold}
 \author{Franz J. Kiraly\thanks{Department of Statistical Science, University College London. \url{f.kiraly@ucl.ac.uk}}
\and
Martin Ehler\thanks{University of Vienna. \url{martin.ehler@univie.ac.at}}}
\author{
	\href{http://www.ucl.ac.uk/statistics/people/franz-kiraly}{Franz J. Király} \thanks{Department of Statistical Science, University College London; and MFO; \emailaddr{f.kiraly@ucl.ac.uk}}
	\and \href{http://homepage.univie.ac.at/martin.ehler/}{Martin Ehler}
		\thanks{Department of Mathematics, University of Vienna, \emailaddr{martin.ehler@univie.ac.at}}
}

\maketitle
\begin{abstract}
\begin{normalsize}
We study phase retrieval from magnitude measurements of an unknown signal as an algebraic estimation problem. Indeed, phase retrieval from rank-one and more general linear measurements can be treated in an algebraic way. It is verified that a certain number of generic rank-one or generic linear measurements are sufficient to enable signal reconstruction for generic signals, and slightly more generic measurements yield reconstructability for all signals. Our results solve a few open problems stated in the recent literature. Furthermore, we show how the algebraic estimation problem can be solved by a closed-form algebraic estimation technique, termed ideal regression, providing non-asymptotic success guarantees.
\end{normalsize}
\end{abstract}


\section{Introduction}\label{sec:intro}
Intensity measurements in diffraction imaging, microscopy, and $x$-ray crystallography represent magnitudes of Fourier samples, and the recovery of their phases is a difficult problem in optical physics. Within a finite model, phase retrieval is the task of reconstructing a vector in $\mathbb{K}^d$ from the magnitude of finitely many rank-$1$ projections. Classical algorithms are due to Gerchberg/Saxton \cite{Gerchberg:1972kx} and Fienup \cite{Fienup:1982vn} involving alternate projection schemes and fit into standard methods from convex optimization \cite{Bauschke:2002ys}, but signal reconstruction is not guaranteed. Sparse nonconvex optimization is applied in \cite{Balan:2007fk}. Semidefinite programming is used in \cite{Candes:uq}, but success guarantees are only obtained asymptotically with growing dimension. Algebraic reconstruction formulas were derived in \cite{Balan:2009fk}, but require the number of measurements to scale quadratically with the dimension. Jointly, algebraic reconstruction and semidefinite programs were applied in \cite{Bachoc:2012fk} to treat rank-$k$ projectors. For further approaches rooted in signal processing, we refer to \cite{Davidoiu:2012fk,Yagle:1999uq} and references therein.

To successfully reconstruct, measurements must contain sufficient information about the signal. If the number of rank-one magnitude measurements is sufficiently large, then generic measurements allow identifiability of all signals, and there is a range of fewer measurements, in which at least generic signals can still be identified, cf.~\cite{Balan:2006fk}.
Measurements using orthogonal projectors of arbitrary rank have been discussed in \cite{Cahill:2013fk}, from where we cite the following open problems:
\begin{description}
\item[(1)] What is the minimal number of orthogonal projectors enabling phase retrieval for all signals in the real case?
\item[(2)] Do sufficiently many generic orthogonal projectors enable phase retrieval for all signals in the real case?
\item[(3)] Does the minimal number of required orthogonal projectors for retrieving phases for all signals in the complex case depend on the rank of the projectors?
\end{description}
In view of investigating the above mentioned transition range from generic to identifiability of all signals, we derive three additional questions\\

\noindent{\bf(4-6)} by replacing ``for all signals'' in (1-3) with ``generic signals''.\\

Furthermore, the results in~\cite{Balan:2013fk,Bodmann:2013fk} directly lead to one more question, which is formulated as a conjecture in~\cite{Bandeira:2013fk}:
\begin{description}
\item[(7)] Do $4n-4$ generic rank-one measurements allow phase retrieval for all signals in the complex case?
\end{description}

So besides the aim for a better understanding of the structure of phase retrieval in general, we are also left with $7$ open problems that we intend to solve.

In this paper, we claim that phase retrieval is in its core an algebraic problem and emphasize the potential of algebraic tools. This change of perspective enables us to not only answer all of the $7$ above questions, but we can also apply symbolic computations and schemes from approximate algebra to design a reconstruction algorithm. Indeed, we observe that phase retrieval can be tackled by ideal regression as introduced in \cite{Kiraly:2012fk} leading to an algebraic signal reconstruction algorithm for few measurements with nonasymptotic success guarantees.

\subsection*{A short note on question 7}
We would like to note that after submission of this paper, question 7 has independently been answered in~\cite{conca2013} by different techniques. The approach of~\cite{conca2013} is more specifically designed for that question, and uses very explicit computations which are not essentially required to obtain the result since it follows from general principles, as we show. On the same note, the article~\cite{conca2013} contains interesting structural results which can be appreciated independently from question 7. 

\section{The Algebra of Phase Retrieval}\label{sec:theorems}
\subsection{Algebraization of Phase Retrieval}
\label{sec:theorems.algpr}

In this section, we will describe how phase retrieval can be viewed as an algebraic problem. This will be crucial in deriving algebraic solution techniques for phase retrieval. In the usual formulation, the two variants of phase retrieval pose two differently flavoured major obstacles to amenability for algebraic tools: in the real formulation, the mapping is algebraic, but the ground field, the real numbers $\RR$, is not algebraically closed. In the complex formulation, the ground field $\CC$ is algebraically closed, but the measurement mapping includes complex conjugation, making it non-algebraic. The latter problem can be overcome - as it has been demonstrated for example in~\cite{Balan:2006fk}, by treating the real and imaginary part separately, making the mapping algebraic, but the ground field real in its stead, and therefore reducing the second problem to the first one.

We will overcome this obstacle by, again, regarding the algebraic mapping over the complex numbers as base field, and restricting back to the reals when necessary. This procedure will allow us to algebraize the measurement process, derive theoretical bounds on reconstructability, and develop accurate reconstruction algorithms.

First we recapitulate the measurement process:

\begin{Prob}[Phase Retrieval, original version]\label{Prob:original}
Let $\KK=\RR$ or $\KK=\CC$. Let $z\in\mathbb{K}^n$ be an unknown vector. Let $P_1,\dots, P_k\in\mathbb{K}^{r\times n}$ be known matrices. Reconstruct $z$ from the measurements
$$b_i = \|P_i z \|^2 = \Tr (z z^*\cdot P_i^* P_i) ,\quad 1\le i\le k,$$
and the knowledge of the $P_i$.
\end{Prob}

In the usual phase retrieval scenario, the $P_i$ are projectors of rank one. The slightly generalized setting above can be treated with the same mathematical and algorithmical tools, so it means no loss of generality or specifity. Also note that if $\KK=\RR$, then $z$ can be reconstructed only up to sign, and if $\KK=\CC$, then only up to phase.

We will now stepwise reformulate the problem, in order to make it amenable to algebraic tools. First we note that phase retrieval is known to be an inverse problem. That is, there is a so-called forward mapping, which takes the (unknown to the observer) signal $z$, and outputs the (observed) values $b_i$. The backward problem is then to obtain $z$ from the $b_i$. Since $z$ can be obtained only up to sign or phase, this is equivalent to obtaining the matrix $Z=zz^*$. Writing all of this explicitly, we obtain as a reformulation of the original Problem~\ref{Prob:original} the following inverse problem:

\begin{Prob}\label{Prob:inverse}
Let $\KK=\RR$ or $\KK=\CC$. Consider the forward mapping
\begin{align*}
\phi:& \left(\KK^{r\times n}\right)^k\times \KK^{n\times n}\rightarrow \left(\KK^{r\times n}\right)^k \times \KK^k\\
& (P_1,\dots, P_k, Z)\mapsto \left(P_1,\dots, P_k, \Tr( Z\cdot P_1^* P_1),\dots, \Tr (Z\cdot P_k^* P_k)\right).
\end{align*}
Reconstruct $\tau:=(P_1,\dots, P_k, Z)$, given $\phi(\tau)$, and assuming that $Z$ is rank one and Hermitian.
\end{Prob}

Note that we have deliberately included the $P_i$ in the range and the image of $\phi$, in order to mathematically model the fact that the projectors $P_i$ are known to the observer; and for technical reasons - equivalent to the latter - which will become apparent further on. Furthermore, assuming that $Z$ is rank one and Hermitian is equivalent to assuming that $Z=zz^*$ for suitable $z$, since knowing $Z$ is equivalent to know $z$ up to sign/phase.

As said in the beginning, there are two major difficulties in applying algebraic techniques to Problem~\ref{Prob:inverse}. The first is that (A) the base field is not algebraically closed if $\KK=\RR$, the second being that (B) the mapping $\phi$ is not algebraic if $\KK=\CC$, since it includes complex conjugation. The solution approach for problem (A) is relatively straithgforward: since the mapping $\phi$ includes only transposes, it is algebraic, therefore we consider the same mapping over the complex numbers. Also, we replace the matrices $P_i\in\RR^{r\times n}$ by matrices $A_i:=P_i^\top P_i$ for reason of convenience:

\begin{Prob}\label{Prob:complex}
Let $z\in\CC^n$ be an unknown vector. Consider the forward mapping
\begin{align*}
\phi:& \left(\CC^{n\times n}\right)^k\times \CC^{n\times n}\rightarrow \left(\CC^{n\times n}\right)^k \times \CC^k\\
& (A_1,\dots, A_k, Z)\mapsto \left(A_1,\dots, A_k, \Tr( Z\cdot A_1),\dots, \Tr (Z\cdot A_k)\right)
\end{align*}
Reconstruct $\tau:=(A_1,\dots, A_k, Z)$, given $\phi(\tau)$, and assuming that $Z$ is symmetric rank one, and that the $A_i$ are symmetric of rank $r$.
\end{Prob}

There are now several things to note: first, the map $\phi$ is algebraic, and range and image are now complex. In particular, the measurements can be complex. Note that we want both $Z$ and $A_i$ to be symmetric, not Hermitian, otherwise the problem would not be algebraic.

Most importantly, however, Problem~\ref{Prob:complex} is a problem which is a-priori different from Problem~\ref{Prob:inverse}, since we have enlarged image and range. When restricting to reals, we obtain the original phase retrieval Problem~\ref{Prob:inverse}, but there is no a-priori reason to believe that the behavior of the complex variant is fundamentally the same as for the original problem.

However, as will turn out, Problem~\ref{Prob:complex} is much easier amenable to tools from algebraic geometry, both on the theoretical and the practical side, and results and algorithms will give rise to solutions for questions and tasks over the reals, as it will be explained in the following section.\\

We proceed treating the variant of the phase retrieval problem~\ref{Prob:inverse} where complex signals are allowed. Recall that the problem was that (B) the map $\phi$ is not algebraic. The solution for this is to ``algebraize'' the map by considering real and imaginary part separately. Namely, writing $P_i = Q_i + \iota\cdot S_i$ with $Q_i,S_i\in\mathbb{R}^{m\times n}$ and $z=x+\iota y$, where $\iota$ denotes the imaginary unit, we obtain:

\begin{Prob}\label{Prob:inversecompalgreal}
Let $x,y \in\mathbb{R}^{n}$ be unknown vectors, write $R:=xx^\top + yy^\top$ and $\Phi:=yx^\top -xy^\top$. Also, write $B_i:=Q_i^\top Q_i + S_i^\top S_i$ and $C_i:= Q_i^\top S_i - S_i^\top Q_i$ for $Q_i,S_i\in\mathbb{R}^{m\times n}$. Consider the forward mapping
\begin{align*}
\phi:& \left(\RR^{n\times n}\right)^{2k}\times \RR^n \rightarrow \left(\RR^{n\times n}\right)^{2k} \times \RR^k\\
& (B_1,C_1,\dots, B_k,C_k, R,\Phi)\mapsto \left(B_1,C_1,\dots, B_k,C_k, \Tr(R\cdot B_1+ \Phi\cdot C_1) ,\dots, \Tr(R\cdot B_n+ \Phi\cdot C_n)\right)
\end{align*}
Reconstruct $\tau= (B_1,C_1,\dots, B_k,C_k,R,\Phi)$, given $\phi(\tau)$, assuming that $B_i,C_i,R,\Phi$ were of the above form.
\end{Prob}

An elementary computation shows that Problem~\ref{Prob:inversecompalgreal} is equivalent to the original complex phase retrieval problem~\ref{Prob:original}: namely, $zz^*= R +\iota\Phi$, so knowing $R$ and $\Phi$ is equivalent to knowing $z$ up to phase. Observe that $\phi$ is now an algebraic map, since the rule is algebraic, and so is the possible set of $B_i,C_i,X,Y$. However, the mapping $\phi$ is now over the reals, a field which is not algebraically closed, entailing an analogue of complication (A) which we have treated in the real case by allowing complex matrices in the range. We will once more do the same and allow a complex range. The set of matrices though have a very specific structure, so we introduce notation for them in our final formulation of the complex phase retrieval problem:

\begin{Prob}[algebraized phase retrieval of complex signal]\label{Prob:inversecompalg}
Define the following sets of matrices:
\begin{align*}
\calS_\CC&:=\{(xx^\top + yy^\top, yx^\top - xy^\top)\;:\;x,y\in\CC^n\}\subseteq \CC^{n\times n}\times \CC^{n\times n}\\
\calP_\CC(r)&:=\{(Q^\top Q + S^\top S,Q^\top S - S^\top Q) \;:\;S,Q\in\CC^{r\times n}\}\subseteq \CC^{n\times n}\times \CC^{n\times n}
\end{align*}
Consider the forward mapping
\begin{align*}
\phi:& \calP_\CC(r)^{k}\times \calS_\CC \rightarrow \calP_\CC(r)^{k} \times \CC^k\\
& (B_1,C_1,\dots, B_k,C_k, R,\Phi)\mapsto \left(B_1,C_1,\dots, B_k,C_k, \Tr(R\cdot B_1+ \Phi\cdot C_1) ,\dots, \Tr(R\cdot B_n+ \Phi\cdot C_n)\right)
\end{align*}
Given $\tau=\phi(B_1,C_1,\dots, B_k,C_k,R,\Phi)$, determine $\phi^{-1}(\tau)$.
\end{Prob}

The set $\calS_\CC$ parameterizes the possible signals, while $\calP_\CC(r)$ parameterizes the possible projections (of rank $r$). Note that $\calS_\CC=\calP_\CC(1)$; nevertheless we make this notational distinction between $\calS_\CC$ and $\calP_\CC(.)$ for clarity.

We reformulate the phase retrieval problem for real signals in analogy, by defining symbols for the space of matrices, yielding in the final version:

\begin{Prob}[algebraized phase retrieval of real signal]\label{Prob:inverserealalg}
Define the following sets of matrices:
\begin{align*}
\calS_\rho &:=\{zz^\top \;:\;z\in\CC^n\}\subseteq \CC^{n\times n}\\
\calP_\rho(r)&:=\{P_i^\top P_i \;:\;P_i\in\CC^{r\times n}\}\subseteq \CC^{n\times n}
\end{align*}
Consider the forward mapping
\begin{align*}
\phi:& \calP_R(r)^{k}\times \calS_R \rightarrow \calP_R (r)^{k} \times \CC^k\\
& (A_1,\dots, A_k, Z)\mapsto \left(A_1,\dots, A_k, \Tr(Z\cdot A_1) ,\dots, \Tr(Z\cdot A_n)\right)
\end{align*}
Given $\tau=\phi(A_1,\dots, A_k,Z)$, determine $\phi^{-1}(\tau)$.
\end{Prob}
Observe that $\calS_\rho$ models the possible signals, and is exactly the set of symmetric complex matrices of rank $1$ (or less), whereas $\calP_\rho(r)$ models the projections, and is exactly the set of symmetric complex matrices of rank $r$ (or less). Note that we have formulated both the real and the complex problem with almost the same forward mapping, the difference lies in the different sets of projection matrices, where in the real case we have single matrices, and in the complex case we have related pairs. Also, for the complex variant of phase retrieval, we have related pairs of matrices $R$ and $\Phi$ instead of the single matrix $Z$.\\

In order to make the notation uniform for both the real and complex cases, we introduce the following convention:
\begin{Not}
Let $Z,A\in\CC^{n\times n}\times \CC^{n\times n}$, with $Z=(X,Y)$ and $A=(B,C)$. Then, we will write, by convention,
$$\Tr (Z\cdot A):=\Tr (X\cdot B + Y\cdot C).$$
\end{Not}

\subsection{Identifiability and Genericity}

A signal $z$ is called \emph{identifiable} if it is uniquely determined in $\KK^n$ by the measurements $b_i$ up to a global phase factor, which is an ambiguity one cannot avoid. The choice of $k$ generic measurements by means of rank-$1$ projectors yield identifiability of generic signals if and only if $k\geq n+1$ in the real and $k\geq 2n$ in the complex case, cf.~\cite[Theorems~2.9 and~3.4]{Balan:2006fk}. Generic rank-$1$ projectors yield identifiability for all signals if and only if $k\geq 2n-1$ in the real case. For the complex setting, examples with $k\geq 4n-4$ that allow reconstruction are known, and this bound is conjectured to be necessary \cite{Bodmann:2013fk}.

We will generalize the statements to the scenario of general linear projections. As described earlier, the strategy is to consider first the corresponding algebraized problem over an algebraically closed field, namely $\CC$, instead of $\RR$, and then descend the results back to the real numbers $\RR$. Again, it is important to note that this is subtly different from considering the projection problem over the complex numbers, since instead of complex conjugation, we consider transposition in order to keep the problem algebraic.

\subsubsection*{A Short Note on Technical Conditions}
The following exposition will use some technical conditions on varieties and maps, namely them being \emph{irreducible}, and (generically) \emph{unramified}. These are standard notions in algebraic geometry and can be found in most introductory books - we refrain from explaining them here as this is beyond the scope of the paper; the logic in the proofs can be understood without knowing what these mean exactly - a glossary of definitions can be found in Appendix~\ref{app:algebraic-glossary}. Intuitively, an algebraic set being irreducible means that there is only one prototypical behaviour for its elements. Unramifiedness is a point-wise algebraic certificate for a mapping staying stable under perturbation in a certain sense. In our case, unramifiedness will certify for identifiability which is stable under perturbation of signals or measurements.

\subsubsection{Identifiability of Signals}\label{sec:theorems.signals}
In this paragraph, we translate identifiability of a signal into an algebraic statement. The main concepts will be identifiability, and identifiability which is stable under perturbation, both corresponding to certain algebraic properties of the signal.

\begin{Not}
We fix some notation and technical assumptions that will be valid in the relevant cases of real and complex phase recognition:
\begin{description}
\item[(i)] The signals will be modelled by an irreducible variety $\calS\subseteq \left(\CC^{n\times n}\right)^\gamma,$ with $\gamma=1$ in the real and $\gamma=2$ in the complex case. For example, $\calS=\calS_\rho$ or $\calS=\calS_\CC$, as in Section~\ref{sec:theorems.algpr}.
\item[(ii)] A measurement scheme will be modelled by the tuple $A=(A_1,\dots, A_k)\in \left(\left(\CC^{n\times n}\right)^\gamma\right)^k$ with $k\in\NN$ being the number of measurements.
\item[(iii)] The measurement process is the formal forward mapping
$$ \phi_A: \calS \rightarrow \CC^k,\quad Z\mapsto \left(\Tr(Z\cdot A_1) ,\dots, \Tr(Z\cdot A_k)\right).$$
\end{description}
\end{Not}

The condition that $\calS$ is irreducible is fulfilled in the cases discussed in the introductory Section~\ref{sec:theorems.algpr}. Namely, both $\calS_\rho$ and $\calS_\CC$ are irreducible varieties, as it is proved in Proposition~\ref{Prop:irrvarieties}

We recapitulate a statement from the last section which expresses identifiability in this formal, slightly more technical setting:

\begin{Rem}\label{Rem:identalg}
By definition, the following are the same:
\begin{description}
   \item[(i)] $Z\in\calS$ is identifiable from $\phi_A(Z)$.
   \item[(ii)] $\# \phi_A^{-1}\phi_A(Z) = 1.$
\end{description}
\end{Rem}

The following statement is crucial in obtaining our local-to-global principle for identifiability. It characterizes signals which are identifiable and stably so under perturbation just in terms of the signal itself, therefore allowing to remove any reference to open neighbourhoods.

\begin{Prop}\label{Prop:algcritsignal}
Assume that $\phi_A$ is generically unramified. Let $Z\in\calS$. Then, the following three statements are equivalent:
\begin{description}
   \item[(i)] $Z$ is identifiable from $\phi_A(Z)$, and remains identifiable under infinitesimal perturbation. (That is, there is a relatively Borel-open neighborhood $U\subseteq \calS$ with $Z\in U$ such that for all $Y\in U$, it holds that $\card{ \phi_A^{-1}\phi_A(Z)} = 1.$)
   \item[(ii)] $Z$ is identifiable from $\phi_A(Z)$, and $\phi_A$ is unramified over $Z$.
   \item[(iii)] A generic $Y\in\calS$ is identifiable from $\phi_A(Z)$. (That is, the set of non-identifiable $Y\in\calS$ is a proper Zarkiski closed subset and therefore Hausdorff measure zero subset of $\calS$.)
\end{description}
In particular, condition (i) is a Zariski open condition on the signal $Z$; that is, the set of signals $Z$ with property (i) is a Zariski open subset of $\calS$.
\end{Prop}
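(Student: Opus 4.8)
The plan is to reduce all three conditions to the geometry of a single auxiliary variety, the \emph{collision variety}
\[
\calD := \calS \times_{\CC^k} \calS = \{(Z_1,Z_2)\in\calS\times\calS \;:\; \phi_A(Z_1)=\phi_A(Z_2)\},
\]
which carries the diagonal $\Delta\cong\calS$ and the first projection $\pi_1\colon\calD\to\calS$, $(Z_1,Z_2)\mapsto Z_1$. By Remark~\ref{Rem:identalg}, a signal $Z$ is identifiable exactly when $\pi_1^{-1}(Z)$ meets $\calD$ only in the diagonal point $(Z,Z)$; equivalently, $Z$ lies outside the set $N:=\pi_1(\calD\setminus\Delta)$ of non-identifiable signals. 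The first task is to record that $N$ is constructible (the image of a morphism of varieties, by Chevalley) and that its Zariski and Euclidean closures coincide, so that the phrase ``$Z$ admits a neighbourhood of identifiable signals'' in (i) is literally $Z\notin\overline N$. This already yields the final assertion for free: condition (i) cuts out precisely the Zariski-open set $\calS\setminus\overline N$.

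The heart of the argument is (i)$\Leftrightarrow$(ii), which I would prove through the standard characterisation that $\phi_A$ is unramified at $Z$ if and only if the diagonal $\Delta$ is open in $\calD$ at $(Z,Z)$ (equivalently, the diagonal morphism $\calS\to\calD$ is a local isomorphism there). Granting this, suppose $Z$ is identifiable but $\phi_A$ is ramified at $Z$; then $\Delta$ is not open in $\calD$ near $(Z,Z)$, so there is a sequence of off-diagonal pairs $(Y_j,Y_j')\in\calD\setminus\Delta$ converging to $(Z,Z)$, whose first coordinates $Y_j$ are non-identifiable signals accumulating at $Z$, whence $Z\in\overline N$ and (i) fails. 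Conversely, if $Z\notin\overline N$ then near $(Z,Z)$ the collision variety coincides with the diagonal, so $\Delta$ is open there and $\phi_A$ is unramified at $Z$; as $Z\notin N$ it is also identifiable, giving (ii). Thus (i), (ii), and ``$Z\notin\overline N$'' coincide.

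For the link with (iii) I would invoke the hypothesis that $\phi_A$ is generically unramified together with the irreducibility of $\calS$: these force $\phi_A$ to be generically finite onto its image $W:=\overline{\phi_A(\calS)}$, so that the generic fibre consists of a fixed number $d$ of reduced points. Generic identifiability---the statement in (iii) that the non-identifiable signals form a proper closed subset---is then exactly the condition $d=1$, i.e.\ $\overline N\neq\calS$, i.e.\ that $\calD\setminus\Delta$ fails to dominate $\calS$ under $\pi_1$. I would close the circle by noting that the open locus $\calS\setminus\overline N$ is nonempty (equivalently, a generic signal satisfies (i)--(ii)) precisely when $\overline N\neq\calS$, so that (iii) records exactly the nonemptiness of the open condition characterised in the previous paragraph.

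The main obstacle is the bridge used in the second paragraph: making rigorous that scheme-theoretic ramification at $Z$ is detected by genuine off-diagonal collisions accumulating at $(Z,Z)$. The ``unramified $\Leftrightarrow$ diagonal open'' criterion supplies this infinitesimally, but upgrading an infinitesimal non-splitting to an actual convergent sequence of distinct collision pairs requires finiteness of the relevant fibres---this is where generic unramifiedness, and restriction to the smooth locus of $\calS$ (the degenerate signal being the only singular point of $\calS_\rho,\calS_\CC$), enter. Away from that locus a ramification direction lies in $\Ker(\mathrm d\phi_A)_Z$ and, $\phi_A$ being the restriction of a linear map, integrates to a curve along which $\phi_A$ folds, producing the required nearby non-identifiable signals; arranging this uniformly rather than merely pointwise is the step demanding the most care.
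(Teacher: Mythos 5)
Your first and third paragraphs are correct and in fact give a more self-contained route than the paper's: $N=\pi_1(\calD\setminus\Delta)$ is constructible by Chevalley, Euclidean and Zariski closures of a constructible set over $\CC$ agree, so condition (i) is literally $Z\notin\overline{N}$, which is Zariski open, and (iii) is literally $\overline{N}\neq\calS$, i.e.\ nonemptiness of that open locus. (Your existential reading of the (i)$\Leftrightarrow$(iii) link is the only tenable one; cf.\ Example~\ref{Ex:psidvsid}, where (iii) holds but (i) fails for every $Z\in\calZ$.) The paper instead obtains all of this by citing Proposition~\ref{Prop:opencert}, which rests on the EGA openness and semicontinuity results collected in Theorem~\ref{Thm:openconds}; your Chevalley-plus-constructible-closure argument is more elementary. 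The genuine gap is your second paragraph, and it is not the repairable loose end you describe but a false bridge: with the scheme-theoretic notion of unramifiedness (diagonal an open immersion into $\calD$), neither direction of (i)$\Leftrightarrow$(ii) holds at the proposition's level of generality, because non-openness of $\Delta$ at $(Z,Z)$ can be carried entirely by nilpotents while $\calD=\Delta$ as sets. Concretely, let $\calS$ be the smooth irreducible curve of matrices with $Z_{11}=t$, $Z_{12}=t^2$, $Z_{13}=t^3$ (all other entries zero), and let $A_1,A_2$ read off the entries $Z_{12}$ and $Z_{13}$, so that $\phi_A$ has image the cuspidal cubic. Then $\phi_A$ is injective on all of $\calS$, every signal satisfies (i), and $\overline{N}=\emptyset$; yet $\mathrm{d}\phi_A=(2t,3t^2)$ kills the tangent direction at $t=0$, so $\phi_A$ is scheme-theoretically ramified there and your (ii) fails at the origin. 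Your proposed repair --- restrict to the smooth locus of $\calS$ and integrate a direction in $\Ker(\mathrm{d}\phi_A)_Z$ to a curve along which $\phi_A$ folds --- dies on the same example: $\calS$ is smooth everywhere, the kernel direction exists, and there is no fold, because the pathology sits in the non-normal image, not in $\calS$.

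The resolution is that the paper never uses the scheme-theoretic notion. Its glossary (Appendix~\ref{app:algebraic-glossary}) defines ``$\phi_A$ unramified over $Z$'' as local constancy of the fiber cardinality $\card{\phi_A^{-1}\phi_A(\cdot)}$ on a Euclidean neighborhood of $Z$; with that definition, ``identifiable and unramified over $Z$'' says verbatim that all fibers near $Z$ are singletons, so (i)$\Leftrightarrow$(ii) is definitional and needs no collision-variety bridge at all (in the cusp example above, $\phi_A$ is unramified over every point in the paper's sense, consistent with (i)). So the correct repair of your write-up is: keep paragraphs one and three, which already prove everything of substance --- Zariski openness of (i) and the local-to-global equivalence with (iii) --- and replace paragraph two with this one-line observation, deleting the diagonal-immersion criterion and the integration argument. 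One smaller caveat in your third paragraph: under the paper's definition, generic unramifiedness does not by itself force generically finite fibers (local constancy of an infinite cardinal is not excluded), but you never actually need the generic degree $d$: the equivalence of (iii) with $\overline{N}\neq\calS$ is immediate, and irreducibility of $\calS$ then makes the nonempty open set $\calS\setminus\overline{N}$ dense.
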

\begin{proof}
This is implied by Proposition~\ref{Prop:opencert} in the appendix.
\end{proof}

The condition that $\phi_A$ is generically unramified is slightly technical and fulfilled in the prototypical cases, see Proposition~\ref{Prop:unrphi} in the appendix for a proof. The condition that $\phi_A$ is \emph{unramified} over $Z$, on the other hand, is the crucial local property to which we translate perturbation-stability. Intuitively, Proposition~\ref{Prop:algcritsignal}~(iii) means that an identifiable signal which remains so under perturbation certifies for the whole signal space. It is also important to note that condition~(ii) in Proposition~\ref{Prop:algcritsignal} is essentially independent from the choice of $\calS$ while (i) and (iii) are a-priori not. We introduce terminology for the condition described in~(i):

\begin{Def}
For brevity, we will call a signal $Z\in\calS$ that is identifiable from $\phi_A(Z)$, and remains identifiable under infinitesimal perturbation, a \emph{perturbation-stably identifiable} signal (by Proposition~\ref{Prop:algcritsignal}~(ii), this is equivalent to $Z$ being identifiable and $\phi_A$ being unramified over $Z$).
\end{Def}

We can reformulate Proposition~\ref{Prop:algcritsignal} as a principle of excluded middle, stating that either almost all signals are perturbation-stably identifiable, or none:

\begin{Cor}\label{Cor:genericpsid}
\begin{description}
   \item[(i)] If there exists a signal $Z\in\calS$ which is perturbation-stably identifiable from $\phi_A (Z)$, then a random signal $Y\in\calS$ is perturbation-stably identifiable with probability one under any Hausdorff continuous probability density on $\calS$.
   \item[(ii)] It cannot happen that there are sets $\calA,\calB\subseteq \calS$, both with positive Hausdorff measure, such that all signals $Z\in\calA$ are perturbation-stably identifiable, and all signals $Z\in\calA$ are not perturbation-stably identifiable.
\end{description}
\end{Cor}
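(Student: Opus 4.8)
The plan is to derive both parts directly from the final clause of Proposition~\ref{Prop:algcritsignal}, which asserts that the set
$$U:=\{Z\in\calS\;:\;Z\text{ is perturbation-stably identifiable}\}$$
is a Zariski open subset of $\calS$, combined with the standing assumption that $\calS$ is irreducible. The only algebro-geometric fact needed beyond this is standard: in an irreducible variety, every nonempty Zariski open set is dense, and its complement --- a proper Zariski closed subset --- has Hausdorff measure zero. Since a Hausdorff continuous probability density is by definition absolutely continuous with respect to the Hausdorff measure on $\calS$, any Hausdorff-null set is also null under such a density.

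For part~(i), the hypothesis supplies a single $Z\in\calS$ that is perturbation-stably identifiable, so $U\neq\emptyset$. By irreducibility of $\calS$, the complement $\calS\setminus U$ is then a proper Zariski closed subset, hence of Hausdorff measure zero, and therefore of probability zero under any Hausdorff continuous density. Consequently a random $Y\in\calS$ lies in $U$ --- that is, is perturbation-stably identifiable --- with probability one.

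For part~(ii), I would split into two cases according to whether $U$ is empty. Writing $\calA$ for the set of perturbation-stably identifiable signals, we have $\calA=U$, and its complement is $\calB:=\calS\setminus U$. If $U=\emptyset$, then $\calA$ is empty and cannot carry positive Hausdorff measure. If $U\neq\emptyset$, then as above $\calB=\calS\setminus U$ is a proper Zariski closed subset of an irreducible variety and thus has Hausdorff measure zero, so $\calB$ cannot carry positive measure either. In both cases it is impossible for $\calA$ and $\calB$ to both have positive Hausdorff measure, which is precisely the assertion (reading the second occurrence of $\calA$ in the statement as $\calB$).

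There is no serious obstacle here: the entire content is packaged in Proposition~\ref{Prop:algcritsignal} together with irreducibility of $\calS$. The one point deserving care is the passage from \emph{proper Zariski closed} to \emph{Hausdorff measure zero}; this is exactly where irreducibility is essential, since on a reducible $\calS$ a proper closed subset could contain an entire component and hence carry positive measure. I would therefore make explicit that the argument invokes the hypothesis, recorded in the notation of Section~\ref{sec:theorems.signals}, that $\calS$ is irreducible.
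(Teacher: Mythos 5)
Your proposal is correct and follows essentially the same route as the paper, whose one-line proof invokes Proposition~\ref{Prop:algcritsignal} (Zariski-openness of perturbation-stable identifiability, hence Hausdorff-nullity of the complement on the irreducible $\calS$) together with the Radon--Nikodym observation that $\calS$-Hausdorff null sets are null for any continuous probability measure --- exactly your absolute-continuity step; you merely spell out the details the paper leaves implicit, including the typo in (ii). One cosmetic point: in part~(ii) the sets $\calA,\calB$ of the statement are arbitrary subsets of the perturbation-stably identifiable locus $U$ and its complement, so you should write $\calA\subseteq U$ and $\calB\subseteq\calS\setminus U$ rather than $\calA=U$, but the argument is unchanged since a subset of a null (or empty) set is null.
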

\begin{proof}
This is a direct consequence of Proposition~\ref{Prop:algcritsignal}, using that by taking Radon-Nikodym derivatives, $\calS$-Hausdorff zero sets are as well probability measure zero sets for any continuous probability measure.
\end{proof}

We give an example to illustrate that for a signal, being identifiable is different from being perturbation-stably identifiable:

\begin{Ex}\label{Ex:psidvsid}
Consider the situation where the signals $\calS =\{zz^\top\;:\; z\in\CC^3\}$ are symmetric rank one matrices (i.e., the usual phase recognition setting), and our matrices are chosen as
$$ A_1 =   \left(\begin{array}{ccc}
      1 & 0 & 0\\
      0 & 0 & 0\\
      0 & 0 & 0
      \end{array}\right),\quad
A_2 =   \left(\begin{array}{ccc}
      0 & 0 & 0\\
      0 & 1 & 0\\
      0 & 0 & 0
      \end{array}\right),\quad
A_3 =   \left(\begin{array}{ccc}
      0 & 0 & 0\\
      0 & 0 & 0\\
      0 & 0 & 1
      \end{array}\right), \quad
A_4 =   \left(\begin{array}{ccc}
      1 & 1 & 1\\
      1 & 1 & 1\\
      1 & 1 & 1
      \end{array}\right).$$
For $\ell=1,2,3$, write $\calZ_\ell:=\{zz^\top \;:\;z\in\CC^3, z_i=-z_j,\;\mbox{where}\; \{1,2,3\}=\{\ell,i,j\}\}$ and $\calC_\ell:=\{zz^\top \;:\;z\in\CC^3, z_\ell = 0\}$. Write $\calZ=\calZ_1\cup\calZ_2\cup\calZ_3$ and $\calC=\calC_1\cup\calC_2\cup\calC_3$. Then, one can check, by an elementary computation:
\begin{description}
   \item[(i)] the perturbation-stably identifiable signals are exactly the signals in $\CC^{3\times 3}\setminus \calZ$.
   \item[(ii)] the signals in $\calZ\setminus \calC$ are not identifiable.
   \item[(iii)] the signals in $\calZ\cap \calC$ are identifiable, but due to (ii) not perturbation-stably identifiable.
\end{description}
Note that $\calZ\cap \calC$ is the image of six lines under the map $z\mapsto zz^\top$, three of which are the coordinate axes.
\end{Ex}

\subsubsection{Identifyingness as a Measurement Property}\label{sec:theorems.measurem}

In Corollary~\ref{Cor:genericpsid}, it has been shown that if one signal is perturbation-stably identifiable, then almost all signals are. Therefore the fact whether almost all signals are identifiable can be regarded as a property of the measurement regime. The following theorem makes this statement exact and states that measurement regimes fall into exactly one of three classes:

\begin{Thm}\label{Thm:algsignalshort}
For a fixed measurement regime $(A_1,\dots, A_k),$ consider the three cases
\begin{description}
   \item[(a)] A generic signal $Z\in\calS$ is not identifiable from $\phi_A(Z)$.
   \item[(b)] A generic, but not all signals $Z\in\calS$, are identifiable from $\phi_A(Z)$.
   \item[(c)] All signals $Z\in\calS$ are identifiable from $\phi_A(Z)$.
\end{description}
The three cases above are mutually exclusive and exhaustive, and equivalent to
\begin{description}
   \item[(a)] No signal $Z\in\calS$ is perturbation-stably identifiable from $\phi_A(Z)$.
   \item[(b)] A generic, but not all signals $Z\in\calS$, are perturbation-stably identifiable from $\phi_A(Z)$.
   \item[(c)] All signals $Z\in\calS$ are perturbation-stably identifiable from $\phi_A(Z)$.
\end{description}
\end{Thm}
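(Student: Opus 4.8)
The plan is to route the entire argument through the set $\calU\subseteq\calS$ of perturbation-stably identifiable signals, which admits a clean geometric description, whereas the set of merely identifiable signals does not. Three facts are available essentially for free. First, by definition every perturbation-stably identifiable signal is in particular identifiable, so $\calU$ is contained in the set of identifiable signals. Second, by the final sentence of Proposition~\ref{Prop:algcritsignal}, $\calU$ is a Zariski open subset of $\calS$. Third, by Corollary~\ref{Cor:genericpsid} together with the irreducibility of $\calS$, the set $\calU$ is either empty or Zariski dense. Consequently the perturbation-stable trichotomy is immediate: either $\calU=\emptyset$, or $\calU$ is a nonempty proper subset of $\calS$ (hence generic but not all, by the dichotomy), or $\calU=\calS$; these three alternatives are manifestly mutually exclusive and exhaustive.

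The bridge to the identifiability trichotomy is the equivalence: a generic signal is identifiable if and only if $\calU\neq\emptyset$. One implication is clear, since if $\calU\neq\emptyset$ then $\calU$ is generic and, being contained in the identifiable locus, forces a generic signal to be identifiable. For the converse I would apply Proposition~\ref{Prop:algcritsignal} to a generic $Z\in\calS$: such a $Z$ is identifiable by hypothesis, and $\phi_A$ is unramified over it because $\phi_A$ is generically unramified; hence the implication (ii)$\Rightarrow$(i) of that proposition certifies $Z\in\calU$, so $\calU\neq\emptyset$. This equivalence yields the matching of case~(a) in the identifiability version with case~(a) in the perturbation-stable version at once: a generic signal fails to be identifiable precisely when $\calU=\emptyset$, that is, when no signal is perturbation-stably identifiable.

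For the matching of the two versions of case~(c) the only nontrivial direction is that identifiability of all signals implies perturbation-stable identifiability of all signals, and here I would invoke the definition directly rather than the unramifiedness criterion: if every signal in $\calS$ is identifiable, then for an arbitrary $Z$ the neighbourhood $U=\calS$ consists entirely of identifiable signals, so $Z$ is perturbation-stably identifiable by condition~(i) of the definition; thus $\calU=\calS$. The reverse direction is just the containment of $\calU$ in the identifiable locus. With (a) matched to (a) and (c) to (c), and both trichotomies exhaustive and mutually exclusive, the intermediate cases (b) correspond by elimination; alternatively, case~(b) states exactly that $\calU\neq\emptyset$ (a generic signal is identifiable) and $\calU\neq\calS$ (not all signals are identifiable, using the equivalence just established for case~(c)), i.e.\ that $\calU$ is a nonempty proper subset of $\calS$.

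I expect the single genuine obstacle to be the forward direction of the bridge equivalence: upgrading ``a generic signal is identifiable'' to ``a generic signal is perturbation-stably identifiable''. Set-theoretic identifiability of the generic fibre does not on its own guarantee stability under perturbation, and it is precisely the generic unramifiedness of $\phi_A$ --- a standing hypothesis, established for the prototypical $\calS_\rho$ and $\calS_\CC$ in the appendix --- that supplies the missing certificate via Proposition~\ref{Prop:algcritsignal}. Everything else is bookkeeping: the substantive geometric content, namely that perturbation-stable identifiability is a Zariski open and all-or-nothing property, has already been isolated in Proposition~\ref{Prop:algcritsignal} and Corollary~\ref{Cor:genericpsid}, so the theorem amounts to repackaging them as a trichotomy.
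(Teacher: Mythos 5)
Your architecture is sound and, modulo packaging, close to the paper's own proof (the paper proves Theorem~\ref{Thm:algsignal} by inserting a third, birational/isomorphism formulation of the trichotomy, whereas you do the bookkeeping directly on the open locus $\calU$). But there is one genuine logical gap, located exactly where you say the matching follows ``at once'': case (a). Your bridge equivalence reads ``a generic signal is identifiable $\iff \calU\neq\emptyset$'', and negating it only yields ``$\calU=\emptyset \iff$ it is \emph{not} the case that a generic signal is identifiable''. Case (a) of the theorem asserts the strictly stronger statement that a generic signal \emph{is not} identifiable, i.e.\ that the identifiable locus is negligible. ``Not generically identifiable'' and ``generically non-identifiable'' are not a priori equivalent: conceivably the identifiable locus is neither negligible nor co-negligible, and ruling this out is precisely the exhaustiveness content of the theorem, so it cannot be smuggled in via a ``precisely when''. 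Note also that openness of $\calU$ does not help directly here, since the identifiable locus itself is in general neither Zariski open nor Zariski closed (cf.\ Examples~\ref{Ex:psidvsid} and~\ref{Ex:psidvsidlow} in the paper).

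The gap closes with one more use of the tools you already have on the table. By the contrapositive of the implication (ii)$\Rightarrow$(i) in Proposition~\ref{Prop:algcritsignal}, any identifiable signal that is not perturbation-stably identifiable must be a point over which $\phi_A$ is ramified; by the standing generic-unramifiedness hypothesis the ramification locus is contained in a proper Zariski closed subset $R\subsetneq\calS$. Hence the identifiable locus is contained in $\calU\cup R$, and if $\calU=\emptyset$ it is contained in $R$, so a generic signal is indeed non-identifiable. This gives (a2)$\Rightarrow$(a1); together with your (b2)$\Rightarrow$(b1) and (c2)$\Rightarrow$(c1), exhaustiveness of the identifiability trichotomy is inherited from the manifest exhaustiveness of the perturbation-stable one, and the rest of your argument stands. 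Two minor points: the emptiness-or-density of $\calU$ follows from its Zariski openness plus irreducibility of $\calS$ alone --- Corollary~\ref{Cor:genericpsid} concerns measure-one statements and is not what you need there; and in the converse bridge direction you should note explicitly that the generic identifiable set and the generic unramified set intersect (both have negligible complement in the irreducible $\calS$), which is what licenses picking a single $Z$ satisfying both hypotheses of Proposition~\ref{Prop:algcritsignal}(ii).
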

\begin{proof}
This is implied by Theorem~\ref{Thm:algsignal} in the appendix.
\end{proof}

Recall Example~\ref{Ex:psidvsid} which shows that the set of identifiable and perturbation-stably identifiable signals in case (b) of Theorem~\ref{Thm:algsignalshort} may differ. The following example shows that the set of identifiable and perturbation-stably identifiable signals may differ in case (a) of Theorem~\ref{Thm:algsignalshort} as well.

\begin{Ex}\label{Ex:psidvsidlow}
Consider the situation where the signals $\calS =\{zz^\top\;:\; z\in\CC^3\}$ are symmetric rank one matrices (i.e., the usual phase recognition setting), and our matrices are chosen as
$$ A_1 =   \left(\begin{array}{ccc}
      1 & 0 & 0\\
      0 & 0 & 0\\
      0 & 0 & 0
      \end{array}\right),\quad
A_2 =   \left(\begin{array}{ccc}
      0 & 0 & 0\\
      0 & 1 & 0\\
      0 & 0 & 0
      \end{array}\right),\quad
A_3 =   \left(\begin{array}{ccc}
      0 & 0 & 0\\
      0 & 0 & 0\\
      0 & 0 & 1
      \end{array}\right).$$
For $\ell=1,2,3$, write $\calC_\ell:=\{zz^\top\;:\; z\in\CC^3, z_\ell = 0\}$ and $\calC=\calC_1\cup\calC_2\cup\calC_3$.
One can check, by an elementary computation:
\begin{description}
   \item[(i)] No signal is perturbation-stably identifiable.
   \item[(ii)] the signals in $\CC^{3\times 3}\setminus\calC$ are not identifiable.
   \item[(iii)] the signal in $\calC$ are identifiable, but not perturbation-stably identifiable.
\end{description}
This is the not the simplest example of this kind, since choosing $n=1$ and one single non-zero $(1\times 1)$-matrix exposes the same behavior, with the origin being the simple identifiable point. However, it is informative to compare this example to Example~\ref{Ex:psidvsid}, in which one more measurement is taken. In this example, the ramification locus (= the set of ramified points) is the union of the coordinate axes $\calC$, a Zariski closed set. There is no non-empty set such that the restriction of $\phi_A$ to it unramified or bijective. In particular, while $\phi_A$ restricted to the origin $(0,0,0)$ is bijective as a map, and therefore an isomorphism of sets, it is generically ramified, since the origin is contained in $\calC$.

Moreover, the identifiable signals in Example~\ref{Ex:psidvsid} consist exactly of the union of this Zariski closed set, and the Zariski open set of perturbation-stably identifiable signals, explaining why the set of identifiable signals in the other example are neither Zariski closed nor Zariski open.
\end{Ex}

Theorem~\ref{Thm:algsignalshort} allows to regard the different grades of identifiability (a), (b), (c) as properties of the measurement regime. We therefore introduce the following abbreviating notation:

\begin{Def}\label{def:measures stable def}
We call a measurement tuple $A=(A_1,\dots, A_k)$:
\begin{description}
   \item[(a)] \emph{non-identifying} for signals in $\calS$, if no signal $Z\in\calS$ is perturbation-stably identifiable from $\phi_A(Z)$.
   \item[(b)] \emph{generically identifying} for signals in $\calS$, if generic signals $Z\in\calS$ are (perturbation-stably) identifiable from $\phi_A(Z)$, and \emph{incompletely identifying}, if generic, but not all signals $Z\in\calS$ are (perturbation-stably) identifiable from $\phi_A(Z)$.
   \item[(c)] \emph{completely identifying} for signals in $\calS$, if all signals $Z\in\calS$ are (perturbation-stably) identifiable from $\phi_A(Z)$.
\end{description}
If $\calS$ is obvious from the context, we will omit the qualifier ``for signals in $\calS$'', always keeping in mind that the terminology depends on $\calS$.
\end{Def}

Theorem~\ref{Thm:algsignalshort} then can be rephrased that a measurement regime $A_1,\dots, A_k$ is either non-identifying, incompletely identifying, or completely identifying - note that due to the theorem, it does not matter whether the ``perturbation-stably'' in the brackets is there or not. We will now show that being non-identifying, generically and completely identifying are properties of the space of possible measurements, just as identifiability is not only a property of the signal, but of signal space.

\begin{Not}
We introduce some notation modelling the space of measurements:
\begin{description}
\item[(iv)] The space of measurements of type $(A_1,\dots, A_k)$ will be modelled by irreducible varieties $\calP_1,\dots, \calP_k \subseteq \left(\CC^{n\times n}\right)^\gamma,$ with $\gamma=1$ in the real and $\gamma=2$ in the complex case. We will write $\calP^{(k)}=\calP_1\times\dots\times \calP_k$ for the space of measurement tuples of size $k$. For example, $\calP^{(k)}=\calP_\CC (r)^k$ for complex signals, or $\calP^{(k)}=\calP_\rho(r)^k$ for real ones.
\item[(v)] The extended measurement process will be modelled by the formal forward mapping
\begin{align*}
\phi:& \calP^{k}\times \calS \rightarrow \calP^{k} \times \CC^k\\
& (A_1,\dots, A_k, Z)\mapsto \left(A_1,\dots, A_k, \Tr(Z\cdot A_1) ,\dots, \Tr(Z\cdot A_n)\right)
\end{align*}
\end{description}
\end{Not}

The condition that the $\calP_i$ is irreducible is fulfilled in the cases discussed in the introductory Section~\ref{sec:theorems.algpr}: both $\calP_\rho(r)$ and $\calP_\CC (r)$ are irreducible varieties, see Proposition~\ref{Prop:irrvarieties}. Our main result is an analogue to the characterization in Proposition~\ref{Prop:algcritsignal}, now for the measurement matrices:

\begin{Prop}\label{Prop:algcritmatrices}
Assume that $\phi$ is generically unramified. Then, the following three statements are equivalent:
\begin{description}
   \item[(i)] $Z\in\calS$ is identifiable from $\phi(A,Z)$, and remains identifiable under infinitesimal perturbation of $A$ and $Z$. (That is, there is a relatively Borel-open neighborhood $U\subseteq \calP^{k}\times\calS$ with $(A,Z)\in U$ such that for all $Y\in U$, it holds that $\card{ \phi^{-1}\phi(Y)} = 1.$)
   \item[(ii)] $(A,Z)$ is identifiable from $\phi (A,Z)$, and $\phi$ is unramified over $(A,Z)$.
   \item[(iii)] For generic $B\in\calP^{k}$, a generic $Y\in\calS$ is identifiable from $\phi(A,Y)$. (That is, the set of $(B,Y)\in \calP^{k}\times\calS$ where $Y\in\calS$ is non-identifiable from $\phi (B,Y)$ is a proper Zarkiski closed subset and therefore Hausdorff measure zero subset of $\calP^{k}\times \calS$.)
\end{description}
In particular, condition (i) is a Zariski open property on the measurement-signal-pair $(A,Z)$; that is, the set of measurement-signal-pairs $(A,Z)$ with property (i) is a Zariski open subset of $\calP^{k}\times \calS$.
\end{Prop}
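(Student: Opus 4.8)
The key observation is that this is precisely the analogue of Proposition~\ref{Prop:algcritsignal} for the \emph{extended} forward map $\phi$, obtained by letting the measurement scheme vary as well. The plan is therefore to apply the same abstract local-to-global principle, Proposition~\ref{Prop:opencert}, that proves Proposition~\ref{Prop:algcritsignal}, but now to the pair $(\phi,\calX)$ with the enlarged domain $\calX:=\calP^{k}\times\calS$ in place of $(\phi_A,\calS)$. Everything then reduces to verifying the hypotheses of Proposition~\ref{Prop:opencert} and translating its abstract conclusion into the concrete statements (i)--(iii).

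First I would check the hypotheses. The map $\phi$ is a morphism of varieties, as each of its components is either a coordinate projection onto a $\calP_i$-block or a trace form $\Tr(Z\cdot A_i)$, hence polynomial; and generic unramifiedness of $\phi$ is assumed outright. The only substantive point is irreducibility of the domain: $\calX=\calP_1\times\dots\times\calP_k\times\calS$ is a finite product of irreducible varieties (the $\calP_i$ and $\calS$ by assumption, and in the prototypical cases by Proposition~\ref{Prop:irrvarieties}), and a finite product of irreducible varieties over the algebraically closed field $\CC$ is again irreducible. I would stress that algebraic closedness is genuinely used here --- over $\RR$ a product of irreducibles can split --- so this is exactly where the earlier decision to work over $\CC$ is needed.

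Next I would set up the dictionary between the abstract and concrete notions of identifiability. Because $\phi$ retains the measurement block unchanged, $\phi^{-1}\phi(A,Z)=\{A\}\times\{Y\in\calS:\Tr(Y\cdot A_i)=\Tr(Z\cdot A_i)\text{ for all }i\}$, so $\card{\phi^{-1}\phi(A,Z)}=1$ says exactly that $Z$ is the unique signal producing the observed measurements under the known scheme $A$; this is why identifiability of the pair $(A,Z)$ and identifiability of the signal $Z$ given $A$ coincide. With this dictionary, Proposition~\ref{Prop:opencert} applied to $(\phi,\calX)$ delivers the three-way equivalence at once: its stable-identifiability clause is (i), where the perturbation now ranges over $(A,Z)$ in $\calX$, i.e.\ over both measurements and signal; its pointwise unramifiedness clause is (ii) verbatim; its generic clause states that the non-identifiable locus $\calB\subseteq\calX$ is a proper Zariski closed subset, which is the parenthetical content of (iii); and the concluding Zariski openness of the stable locus is the final ``in particular''.

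The one step that is more than bookkeeping, and which I expect to be the main (though standard) obstacle, is reconciling the \emph{nested}-genericity reading of (iii) --- ``for generic $B$, a generic $Y$ is identifiable'' --- with the product statement that $\calB$ is proper Zariski closed in $\calP^{k}\times\calS$. This requires a fibration argument passing from the closed set $\calB$ to its fibres $\calB_B=\{Y\in\calS:(B,Y)\in\calB\}$ over the projection $\pi:\calX\to\calP^{k}$. Decomposing $\calB$ into irreducible components and applying the fibre-dimension theorem to each, every component either fails to dominate $\calP^{k}$ (so a generic $B$ receives an empty fibre from it) or dominates it with generic fibre of dimension $\dim\calB_j-\dim\calP^{k}<\dim\calX-\dim\calP^{k}=\dim\calS$; intersecting the resulting dense-open conditions on $B$, the fibre $\calB_B$ is, for generic $B$, a finite union of proper closed subsets of the irreducible $\calS$, hence proper closed. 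The converse direction is immediate, since if $\calB_B$ is proper for $B$ in a dense set then $\calX\setminus\calB$ is dense and, being open, has proper closed complement. With the two phrasings of (iii) thereby identified, the proof is complete.
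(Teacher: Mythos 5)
Your proposal is correct and follows essentially the same route as the paper, whose entire proof is the single observation that Proposition~\ref{Prop:opencert} applies to the irreducible variety $\calX=\calP^{k}\times\calS$; your verification of irreducibility of the product, the fibre computation $\phi^{-1}\phi(A,Z)=\{A\}\times\{Y\in\calS:\Tr(Y\cdot A_i)=\Tr(Z\cdot A_i)\ \text{for all}\ i\}$, and the fibre-dimension argument reconciling the nested genericity in (iii) with properness of the closed locus in the product are all sound elaborations of details the paper leaves implicit.
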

\begin{proof}
The statement follows from Proposition~\ref{Prop:opencert}, applied to the irreducible variety $\calX=\calP^{k}\times \calS$.
\end{proof}

The main obstacle in generalizing Theorem~\ref{Thm:algsignalshort} to an algebraic characterization, or a local-global-property of measurements lies in the fact that the perturbation can occur in both the signal $Z$ and the measurement regime $A$. We therefore need to provide an intermediate result which removes the dependence on the measurement:

\begin{Prop}\label{Prop:opencond}
Assume that $\phi$ is generically unramified. Then, the following two conditions on measurement regimes $A\in \calP^{k}$ are (Zariski) open conditions:
\begin{description}
   \item[(i)] $A$ is generically identifying and remains generically identifying under perturbation. That is, there is a (relatively Borel-) open neighborhood $U\subseteq \calP^{k}$ with $A\in U$ such that all $B\in U$ are generically identifying.
   \item[(ii)] $A$ is completely identifying and remains completely identifying under perturbation. That is, there is a (relatively Borel-) open neighborhood $U\subseteq \calP^{k}$ with $A\in U$ such that all $B\in U$ are completely identifying.
\end{description}
\end{Prop}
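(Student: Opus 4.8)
The plan is to express each of the two conditions as the strong-topology (Borel/Euclidean) interior of a \emph{constructible} subset of $\calP^{(k)}$, and then to invoke the standard fact that over $\CC$ the Euclidean closure of a constructible set agrees with its Zariski closure, which forces such interiors to be Zariski open.

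First I would set up the relevant loci. Let $W\subseteq\calP^{(k)}\times\calS$ be the Zariski-open locus of perturbation-stably identifiable pairs furnished by Proposition~\ref{Prop:algcritmatrices}, let $F=(\calP^{(k)}\times\calS)\setminus W$ be its Zariski-closed complement, and write $\pi:\calP^{(k)}\times\calS\to\calP^{(k)}$ for the projection. Two bookkeeping ingredients are needed to relate $W$ to the fixed-regime notions of Theorem~\ref{Thm:algsignalshort}: (1) since $\phi$ records the measurement tuple in its output, the fibre $\phi^{-1}\phi(A,Z)$ equals $\{A\}\times\phi_A^{-1}\phi_A(Z)$, so joint identifiability of $(A,Z)$ coincides with identifiability of $Z$ under $\phi_A$; and (2) the Jacobian of $\phi$ is block-triangular with an identity block in the measurement-preserving directions, so $\phi$ is unramified over $(A,Z)$ exactly when $\phi_A$ is unramified over $Z$. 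Combining these with the criteria in Proposition~\ref{Prop:algcritmatrices}(ii) and Proposition~\ref{Prop:algcritsignal}(ii), I would conclude that $(A,Z)\in W$ if and only if $Z$ is perturbation-stably identifiable for the fixed regime $A$.

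With this identification I would translate the two target conditions. By Theorem~\ref{Thm:algsignalshort}, $A$ is generically identifying if and only if \emph{some} signal is perturbation-stably identifiable for $A$, i.e. if and only if the fibre $W\cap(\{A\}\times\calS)$ is nonempty; hence the set $G$ of generically identifying regimes equals $\pi(W)$. Likewise $A$ is completely identifying if and only if the whole fibre $\{A\}\times\calS$ lies in $W$, i.e. if and only if it misses $F$; hence the set $C$ of completely identifying regimes equals $\calP^{(k)}\setminus\pi(F)$. By Chevalley's theorem the images $\pi(W)$ and $\pi(F)$ are constructible, so $G$, $C$, and their complements are constructible subsets of $\calP^{(k)}$. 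Now condition (i), respectively (ii), of the proposition is by definition exactly the set of $A$ admitting a Borel-open neighborhood contained in $G$, respectively in $C$ --- that is, the Euclidean interior of $G$, respectively of $C$.

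Finally I would close with the interior computation. For any constructible $S\subseteq\calP^{(k)}$ one has $\overline{S}^{\,\mathrm{Eucl}}=\overline{S}^{\,\mathrm{Zar}}$, since a constructible set contains a Zariski-dense, hence Euclidean-dense, open subset of each component of its closure. Applying this to the constructible complements $G^c$ and $C^c$ yields that the Euclidean interior of $G$ equals $\calP^{(k)}\setminus\overline{G^c}^{\,\mathrm{Zar}}$ and that of $C$ equals $\calP^{(k)}\setminus\overline{C^c}^{\,\mathrm{Zar}}$, both manifestly Zariski open. I expect the main obstacle to be the bookkeeping of the second paragraph: carefully checking that the joint perturbation-stability recorded by $W$ agrees fibrewise with the fixed-regime notion of Theorem~\ref{Thm:algsignalshort}, so that the clean equalities $G=\pi(W)$ and $C=\calP^{(k)}\setminus\pi(F)$ hold exactly. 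Once the two sets are pinned down as constructible, the passage from Euclidean interior to Zariski openness is routine.
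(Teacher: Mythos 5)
Your overall architecture is the same as the paper's (take the Zariski-open locus of jointly perturbation-stable pairs $\calY=W$ furnished by Proposition~\ref{Prop:algcritmatrices}, project it to $\calP^{(k)}$ for condition (i), and work with the complement for condition (ii)), but your ``bookkeeping'' step contains a genuine error, and it is exactly the one you flagged as the main obstacle. The claim that $\phi$ is unramified over $(A,Z)$ \emph{exactly when} $\phi_A$ is unramified over $Z$ is false under this paper's definition of unramifiedness. The paper does not use the standard differential/scheme-theoretic notion (for which your block-triangular Jacobian heuristic, or the fact that the fibre of $\phi$ through $(A,Z)$ is $\{A\}\times\phi_A^{-1}\phi_A(Z)$, would indeed suffice); it defines unramifiedness at $x$ as local constancy of the \emph{total fibre cardinality} on a Borel neighborhood of $x$ in the ambient space. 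Since $\card{\phi^{-1}\phi(B,Y)}=\card{\phi_B^{-1}\phi_B(Y)}$, joint unramifiedness of $\phi$ at $(A,Z)$ demands that this count stay constant as the measurement tuple $B$ is perturbed too, which is strictly stronger than unramifiedness of $\phi_A$ at $Z$. The paper itself exhibits the failure of your ``exactly when'': in Section~\ref{sec:theorems.ramex} (Example~\ref{Ex:ramex}) there are regimes $A$ for which ``$\phi_A$ does not ramify for generic signals $Z$, while $\phi$ is ramified at $(A,Z)$ for all $Z$.'' Consequently $(A,Z)\in W$ is strictly stronger than fixed-regime perturbation-stable identifiability, the inclusion $\pi(W)\subseteq G$ can be proper (such an $A$ is generically identifying in the sense of Theorem~\ref{Thm:algsignalshort} yet lies outside $\pi(W)$), and likewise $C$ need not equal $\calP^{(k)}\setminus\pi(F)$ (a regime with all signals identifiable may still meet the joint ramification locus). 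Since your constructibility of $G$ and $C$ rests entirely on these two set identities, the Chevalley step has nothing to stand on as written.

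The repair is to apply your mechanism to $\pi(W)$ and $\calP^{(k)}\setminus\pi(F)$ themselves, which is in effect what the paper does: it identifies condition (i) with $\psi(\calY)$ and condition (ii) with the complement of $\psi(\calY^C)$ ``by construction'' --- that is, the clause ``remains \ldots under perturbation'' is read through the joint locus, precisely to exclude the non-stable regimes of Section~\ref{sec:theorems.ramex} --- and then concludes openness from openness of the projection, respectively closedness of $\psi(\calY^C)$. It is worth noting that your closing move is actually \emph{more} careful than the paper's at one point: the paper's assertion in (ii) that the image $\psi(\calY^C)$ of a Zariski-closed set is closed is not automatic, since the fibre factor $\calS$ is affine rather than complete (it needs, e.g., the cone structure of $\calS$ and projectivization, or precisely your Chevalley-plus-Euclidean-interior argument, under which constructibility of $\pi(F)$ suffices). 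So your third paragraph is sound and even patches a glossed-over step; the second paragraph is where the proof breaks, because it conflates the paper's fibre-count notion of unramifiedness with the differential one and thereby collapses the distinction between ``generically identifying'' and ``stably generically identifying'' that the surrounding text of the paper is explicitly at pains to maintain.
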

\begin{proof}
Consider the maps
\begin{align*}
\phi: &\calP^{(k)}\times \calS \rightarrow \calP^{(k)} \times \CC^k,\; (A_1,\dots, A_k, Z)\mapsto \left(A_1,\dots, A_k, \Tr(Z\cdot A_1),\dots,\Tr(Z\cdot A_n)\right),\\
\psi: &\calP^{(k)}\times \calS \rightarrow \calP^{(k)},\; (A_1,\dots, A_k, Z)\mapsto (A_1,\dots, A_k),\\
\pi: &\calP^{(k)}\times \calS \rightarrow \calS, \;(A_1,\dots, A_k, Z)\mapsto Z.
\end{align*}
(i) Consider the set $\calY=\{x\in\calP^{(k)}\times \calS\;:\;\phi(x)\;\mbox{is identifiable and unramified}\}.$ By Proposition~\ref{Prop:algcritmatrices} $\calZ$ is a Zariski open set (and possibly empty). Since $\psi$ is surjective, the set $\psi(\calY)$ is therefore an open subset of $\calP^{(k)}$, and by construction, describes the condition (i), therefore proving its openness.\\

(ii) Keep the notations above, and consider the set-complement $\calY^C$ of $\calY$ in $\calP^{(k)}\times \calS$. Since $\calY$ is open, $\calY^C$ is closed, and $\calV:=\psi\left(\calY^C\right)$ is closed as well. Therefore, the set-complement $\calV^C$ in $\calP^{(k)}$ is open. By construction, $\calV^C$ describes condition (ii), therefore openness of condition (ii) follows.
\end{proof}

\begin{Def}
We call a measurement regime $A\in\calP^{(k)}$:
\begin{description}
   \item[(a)] \emph{stably non-identifying} in $\calP^{(k)}$, if $A$ is non-identifying and remains non-identifying under perturbation, as in Proposition~\ref{Prop:opencond}~(i).
   \item[(b)] \emph{stably generically identifying} in $\calP^{(k)}$, if $A$ is generically identifying and remains generically identifying under perturbation, as in condition (i). \emph{stably incompletely identifying} in $\calP^{(k)}$, if $A$ is incompletely identifying and remains incompletely identifying under perturbation, as in Proposition~\ref{Prop:opencond}~(i).
   \item[(c)] \emph{stably completely identifying} in $\calP^{(k)}$, if $A$ is completely identifying and remains completely identifying under perturbation, as in Proposition~\ref{Prop:opencond}~(ii).
\end{description}
If $\calP^{(k)}$ is obvious from the context, we will omit the qualifier ``in $\calP^{(k)}$'', always keeping in mind that the terminology depends on $\calP^{(k)}$.
\end{Def}

Note that a measurement regime can be, at the same time, neither stably non-identifying, stably generically identifying, nor stably completely identifying. However, by definition, stably non-identifying is mutually exclusive to stably generically identifying, and stably incompletely identifying is mutually exclusive to stably completely identifying.

Proposition~\ref{Prop:opencond} can be reformulated as a local-to-global-principle:

\begin{Cor}\label{Cor:genericmeas}
Keep the notations of Proposition~\ref{Prop:opencond}. Then:
\begin{description}
   \item[(ia)] If there exists a stably generically identifying measurement regime $A\in\calP^{(k)}$, a generic measurement $B\in\calP^{(k)}$ is stably generically identifying.
   \item[(ib)] If there exists a stably completely identifying measurement regime $A\in\calP^{(k)}$, a generic measurement $B\in\calP^{(k)}$ is stably completely identifying.
   \item[(iia)] If there exists a stably non-identifying measurement regime $A\in\calP^{(k)}$, a generic measurement $B\in\calP^{(k)}$ is stably non-identifying, and no measurement is stably incompletely identifying or stably completely identifying.
   \item[(iib)] If there exists a stably incompletely identifying measurement regime $A\in\calP^{(k)}$, a generic measurement $B\in\calP^{(k)}$ is stably incompletely identifying, and no measurement is stably completely identifying.
\end{description}
\end{Cor}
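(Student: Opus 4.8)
The plan is to reduce all four assertions to one topological principle on the irreducible variety $\calP^{(k)}$: a nonempty Zariski-open subset is Zariski-dense, dense in the Borel topology, and has complement a proper Zariski-closed set of Hausdorff measure zero; moreover any two nonempty Zariski-open subsets meet (this is exactly irreducibility). Combined with the mutual exclusivity of the identifiability types and with Proposition~\ref{Prop:opencond} (which supplies the Zariski-openness of the ``stable'' conditions), each part becomes a short deduction. I would first fix loci $W_{\mathrm{gen}},W_{\mathrm{com}},W_{\mathrm{inc}},W_{\mathrm{non}}\subseteq\calP^{(k)}$ for the stably generically, completely, incompletely, and non-identifying regimes. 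By Proposition~\ref{Prop:opencond}, $W_{\mathrm{gen}}$ and $W_{\mathrm{com}}$ are Zariski open; and since complete (resp.\ incomplete) identifiability implies generic identifiability, $W_{\mathrm{com}}\subseteq W_{\mathrm{gen}}$ and $W_{\mathrm{inc}}\subseteq W_{\mathrm{gen}}$.

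Parts (ia) and (ib) are then immediate: the hypothesis asserts the relevant set ($W_{\mathrm{gen}}$, resp.\ $W_{\mathrm{com}}$) is nonempty, it is Zariski open by Proposition~\ref{Prop:opencond}, hence dense with complement of Hausdorff measure zero, so a generic $B\in\calP^{(k)}$ lies in it.

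For (iia) I would first show $W_{\mathrm{gen}}=\emptyset$. A stably non-identifying $A$ has a Borel-open neighborhood of non-identifying regimes; as non-identifying and generically identifying are mutually exclusive, this neighborhood misses $W_{\mathrm{gen}}$. Were $W_{\mathrm{gen}}$ nonempty it would be Borel-dense, a contradiction; hence $W_{\mathrm{gen}}=\emptyset$, and therefore $W_{\mathrm{com}}=W_{\mathrm{inc}}=\emptyset$, which is the ``no measurement is \dots'' clause. To see that a generic $B$ is stably non-identifying, consider the Zariski-open set $\calY\subseteq\calP^{(k)}\times\calS$ of perturbation-stably identifiable pairs furnished by Proposition~\ref{Prop:algcritmatrices}. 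For $B$ in the neighborhood above, Theorem~\ref{Thm:algsignalshort} (case (a)) gives that no signal is perturbation-stably identifiable, so $\calY$ misses a nonempty Borel-open set and hence $\calY=\emptyset$ by density. Then Theorem~\ref{Thm:algsignalshort} makes \emph{every} regime non-identifying, so every regime (all of whose perturbations are likewise non-identifying) is stably non-identifying, in particular the generic one.

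Part (iib) is the main obstacle. The hypothesis gives $W_{\mathrm{inc}}\neq\emptyset$, whence $W_{\mathrm{gen}}\neq\emptyset$ is dense, and the exclusivity argument of (iia) — now using that incomplete and complete identifiability are mutually exclusive — forces $W_{\mathrm{com}}=\emptyset$, proving the ``no measurement is stably completely identifying'' clause. The delicate step is to upgrade this to ``a generic regime is \emph{not} completely identifying''. Here I would use that the set $C$ of completely identifying regimes is constructible: it is the complement of the projection to $\calP^{(k)}$ of the collision locus $\{(A,Z,Z'):Z\neq Z',\ \phi(A,Z)=\phi(A,Z')\}$, which is constructible by Chevalley's theorem. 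A dense constructible subset of an irreducible variety contains a nonempty Zariski-open set, every point of which would be stably completely identifying, contradicting $W_{\mathrm{com}}=\emptyset$; hence $C$ is contained in a proper Zariski-closed set and $(\overline{C})^{c}$ is nonempty Zariski open. Finally $W_{\mathrm{gen}}\cap(\overline{C})^{c}$ is an intersection of two nonempty Zariski-open subsets of the irreducible $\calP^{(k)}$, hence nonempty, open, and dense; each of its points has a Borel-open neighborhood of generically-but-not-completely- (that is, incompletely-) identifying regimes, so $W_{\mathrm{gen}}\cap(\overline{C})^{c}\subseteq W_{\mathrm{inc}}$, and a generic $B$ is stably incompletely identifying. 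The recurring difficulty, and the reason constructibility and irreducibility are both needed, is the bridge between the Borel topology in which ``remains \dots\ under perturbation'' is phrased and the Zariski topology in which openness and genericity are controlled.
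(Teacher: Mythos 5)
Your parts (ia) and (ib) are correct and coincide with the paper's own one-line argument: the stable conditions are Zariski open by Proposition~\ref{Prop:opencond}, and a nonempty Zariski-open subset of the irreducible $\calP^{(k)}$ is dense with complement of measure zero. Your part (iib) is also correct, and takes a genuinely different route: the paper derives (iia) and (iib) ``as negations'' from the trichotomy of Theorem~\ref{Thm:algmeasureshort} (proved in the appendix as Theorem~\ref{Thm:algmeasure} via birationality of $\phi$), whereas your Chevalley-constructibility argument for the completely identifying locus re-proves the relevant piece of that theorem from scratch; the bridge you build there --- a dense constructible set contains a nonempty Zariski-open, hence Borel-open, set whose points are automatically \emph{stably} completely identifying --- is exactly the right way to reconcile the Borel and Zariski formulations.

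However, (iia) contains a genuine error. From $\calY=\emptyset$ you conclude, citing Theorem~\ref{Thm:algsignalshort}, that \emph{every} regime is non-identifying, and deduce stability trivially from that universal claim. The inference is false: $\calY=\emptyset$ says no pair $(B,Z)$ is identifiable stably under \emph{joint} perturbation of measurement and signal, while a regime $B$ is non-identifying only if no signal is perturbation-stably identifiable under perturbation of $Z$ \emph{alone}, with $B$ held fixed; it can happen that $\phi_B$ is unramified at generic $Z$ even though $\phi$ ramifies at $(B,Z)$ for all $Z$. The paper itself exhibits exactly this: in Section~\ref{sec:theorems.ramex}, Example~\ref{Ex:ramex} gives $n$ real measurements that are generically identifying, although for $k=n$ a generic regime is stably non-identifying (so the hypothesis of (iia) holds while ``every regime is non-identifying'' fails). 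The conclusion you want is still reachable from what you established: by Proposition~\ref{Prop:algcritmatrices}, $\calY\neq\emptyset$ is equivalent to birationality of $\phi$, so $\calY=\emptyset$ places you in case (a) of Theorem~\ref{Thm:algmeasure}, whose stable formulation says precisely that a generic regime is stably non-identifying; alternatively, run your own (iib)-style constructibility argument on the locus of identifiable pairs $\{(B,Z)\;:\;\card{\phi^{-1}\phi(B,Z)}=1\}$. As written, though, the final step of (iia) rests on a false universal statement.
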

\begin{proof}
(ia) and (ib) follow from Zariski-openness of the conditions asserted in Proposition~\ref{Prop:opencond}. (iia) and (iib) can be derived as negations.
\end{proof}

Proposition~\ref{Prop:opencond} also allows to prove an analogue of Theorem~\ref{Thm:algsignalshort}, now for classes of measurements instead of a single measurement regime:

\begin{Thm}\label{Thm:algmeasureshort}
Assume that $\phi$ is generically unramified. Consider the three cases
\begin{description}
   \item[(a)] A generic measurement regime $A\in\calP^{k}$ is non-identifying.
   \item[(b)] A generic measurement regime $A\in\calP^{k}$ is incompletely identifying.
   \item[(c)] A generic measurement regime $A\in\calP^{k}$ is completely identifying.
\end{description}
The three cases above are mutually exclusive and exhaustive, and equivalent to
\begin{description}
   \item[(a)] A generic measurement regime $A\in\calP^{k}$ is stably non-identifying. No measurement regime $A\in\calP^{k}$ is stably generically identifying.
   \item[(b)] A generic measurement regime $A\in\calP^{k}$ is stably incompletely identifying.
   \item[(c)] A generic measurement regime $A\in\calP^{k}$ is stably completely identifying.
\end{description}
\end{Thm}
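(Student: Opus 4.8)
The plan is to realize the three classes of measurement regimes as the image and a preimage of a single Zariski-open subset of $\calP^{(k)}\times\calS$ under the projection $\psi\colon\calP^{(k)}\times\calS\to\calP^{(k)}$, and then to read off the trichotomy from the fact that on the irreducible variety $\calP^{(k)}$ every non-empty Zariski-open set is dense while every proper Zariski-closed set is nowhere dense. All the work of passing between ``identifiable'' and ``perturbation-stably identifiable'' has already been done by the signal-level Theorem~\ref{Thm:algsignalshort}, so I would phrase everything in terms of perturbation-stable identifiability, which is the notion governed by a Zariski-open condition.

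Concretely, I would introduce the set $\calY=\{x\in\calP^{(k)}\times\calS\ :\ \phi(x)\text{ is identifiable and }\phi\text{ is unramified over }x\}$, which is Zariski open by Proposition~\ref{Prop:algcritmatrices} (this is where the standing hypothesis that $\phi$ is generically unramified enters). Writing $\calY_A=\{Z\in\calS:(A,Z)\in\calY\}$ for the fibre over $A$, irreducibility of $\calS$ forces $\calY_A$ to be either empty or dense; hence $A$ is generically identifying exactly when $\calY_A\neq\emptyset$, i.e. $A\in\psi(\calY)$, and completely identifying exactly when $\calY_A=\calS$, i.e. $A\notin\psi(\calY^C)$. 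Since a projection $X\times Y\to X$ is an open map, the set $\calG:=\psi(\calY)$ of generically identifying regimes is open; and by the argument already used in the proof of Proposition~\ref{Prop:opencond}(ii), $\psi(\calY^C)$ is closed, so the set $\calC$ of completely identifying regimes is open with $\calC\subseteq\calG$. Consequently $\calI:=\calG\setminus\calC$ (incompletely identifying) is open as an intersection of open sets, while $\calN:=\calG^C$ (non-identifying) is closed. Because $\calG,\calC,\calI$ are each already open, being generically / completely / incompletely identifying coincides with being \emph{stably} so; this settles the equivalence of cases (b) and (c) in the two lists at once.

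The trichotomy is then obtained by a case split on $\calP^{(k)}$. If $\calC\neq\emptyset$, then $\calC$ is dense, a generic regime is (stably) completely identifying — case (c) — and $\calI,\calN\subseteq\calC^C$ are nowhere dense. If $\calC=\emptyset$ but $\calI\neq\emptyset$, then $\calI$ is dense, a generic regime is (stably) incompletely identifying — case (b) — and $\calN=\calI^C$ is nowhere dense. If $\calC=\calI=\emptyset$, then $\calG=\emptyset$ and $\calN=\calP^{(k)}$, so every regime is non-identifying — case (a). This yields mutual exclusivity and exhaustivity of the first list. It remains to treat the equivalence in case (a): if a generic regime is non-identifying, then the closed set $\calN$ is dense, hence $\calN=\calP^{(k)}$, whence $\calG=\emptyset$ (no regime is stably generically identifying) and the interior of $\calN$ is all of $\calP^{(k)}$ (every regime is stably non-identifying); the converse is immediate, since $\calG=\emptyset$ forces $\calN=\calP^{(k)}$. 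The local-to-global Corollary~\ref{Cor:genericmeas} may be cited to phrase these conclusions, but beyond the two density facts it is not strictly required.

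I expect the one genuinely non-formal input to be the closedness of $\psi(\calY^C)$, equivalently the openness of the ``completely identifying'' condition: for an affine $\calS$ the projection $\psi$ need not be a closed map, so this cannot be taken for granted and must be imported from Proposition~\ref{Prop:opencond}(ii). Once that is in hand, the remainder is bookkeeping on an irreducible variety, and the slightly delicate point is merely to keep straight that the per-case equivalences for (b) and (c) are free (the relevant sets are already open), whereas case (a) genuinely uses that a dense closed subset of an irreducible variety is the whole space.
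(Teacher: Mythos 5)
There is a genuine gap, and it sits exactly at the point the theorem is about. Your two pointwise identifications --- ``$A$ is generically identifying exactly when $A\in\psi(\calY)$'' and ``completely identifying exactly when $A\notin\psi(\calY^C)$'' --- are false in the ``only if'' direction. Membership in $\calY$ requires $\phi$ (the joint map, perturbing both $A$ and $Z$) to be unramified over $(A,Z)$, whereas the paper's notions non-/generically/completely identifying are defined through the signal-level map $\phi_A$ (perturbing $Z$ only, Definition~\ref{def:measures stable def}). Slicing a neighborhood does give ``$(A,Z)\in\calY$ $\Rightarrow$ $Z$ perturbation-stably identifiable under $\phi_A$'', so your ``if'' directions are fine; but the converses fail, and the paper says so explicitly at the end of Section~\ref{sec:theorems.measurem} and in Section~\ref{sec:theorems.ramex}: $\phi_A$ can be generically unramified with singleton fibres while $\phi$ is ramified at $(A,Z)$ for \emph{every} $Z$. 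Consequently $\calN:=\psi(\calY)^C$ is \emph{not} the set of non-identifying regimes, and your case-(a) conclusion ``$\calN=\calP^{(k)}$, so every regime is non-identifying'' is actually false --- in case (a) exceptional generically identifying regimes may exist. What your sets $\calG=\psi(\calY)$ and $\calC=\psi(\calY^C)^C$ correctly capture are the \emph{stable} notions (they are literally the sets built in the proof of Proposition~\ref{Prop:opencond}); the missing content is the bridge from these to the plain $\phi_A$-level notions for \emph{generic} $A$, which is precisely the equivalence of the two lists. The repair needs a genericity-of-slices argument: if $\calY=\emptyset$, then by Proposition~\ref{Prop:opencert} and irreducibility the generic fibre cardinality of $\phi$ is $\nu\ge 2$, so the singleton-fibre locus $W=\{(A,Z):\card{\phi_A^{-1}\phi_A(Z)}=1\}$ lies in a proper Zariski-closed subset; hence $\{A:\overline{W_A}=\calS\}$, which by Proposition~\ref{Prop:algcritsignal} and Theorem~\ref{Thm:algsignalshort} contains all generically identifying regimes, is a proper closed subset, and a generic $A$ is (stably) non-identifying. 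Dually, for exhaustiveness of case (c) you must show ``generic $A$ completely identifying $\Rightarrow$ $\calC\neq\emptyset$'': here $W\supseteq U\times\calS$ for some dense open $U$, fibres are then locally constant of cardinality one on the open set $U\times\calS$, so $U\times\calS\subseteq\calY$ and $\calC\supseteq U$ --- true, but it is an argument, not a definition. (The paper routes all of this through the third, birational characterization of $\phi$ in Theorem~\ref{Thm:algmeasure}, mimicking Theorem~\ref{Thm:algsignal}; that formulation packages exactly the generic-slice reasoning you skipped.)

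Two smaller points. First, ``$\calI:=\calG\setminus\calC$ is open as an intersection of open sets'' is wrong: $\calG\setminus\calC=\calG\cap\calC^C$ and $\calC^C$ is closed. In general $\calI$ is not open, and the pointwise claim that incompletely identifying coincides with stably incompletely identifying fails when $\calC\neq\emptyset$ (a point of $\calG\setminus\calC$ in the Euclidean closure of $\calC$ has completely identifying regimes in every neighborhood). This is harmless for your trichotomy, since case (b) forces $\calC=\emptyset$ and then $\calI=\calG$ is open dense, but the sentence ``this settles the equivalence of cases (b) and (c) at once'' leans on the false version and should be restricted to the relevant case. Second, you rightly flag that closedness of $\psi(\calY^C)$ cannot come from $\psi$ being a closed map and must be imported from Proposition~\ref{Prop:opencond}(ii); that instinct is correct, and citing it is legitimate since the proposition precedes the theorem.
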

\begin{proof}
A proof by analogy is described in the appendix, as Theorem~\ref{Thm:algmeasure}
\end{proof}

We can therefore define terminology that describe cases (a) to (c) shortly:

\begin{Def}
Keep the notations of Theorem~\ref{Thm:algmeasureshort}. We will call a the set of measurements $\calP^{k}$ \emph{generically unramified} if $\phi$ is generically unramified. We will call a generically unramified $\calP^{k}$:
\begin{description}
   \item[(a)] \emph{non-identifying} if a generic measurement $A\in\calP^{k}$ is non-identifying.
   \item[(b)] \emph{generically identifying} if a generic measurement $A\in\calP^{k}$ is generically identifying. \emph{incompletely identifying} if a generic measurement $A\in\calP^{k}$ is incompletely identifying.
   \item[(c)] \emph{completely identifying} if a generic measurement $A\in\calP^{k}$ is completely identifying.
\end{description}
\end{Def}

Note that, somewhat differently as it happens in Theorem~\ref{Thm:algsignalshort} for signals, generically identifying measurements can exist in generically non-identifying measurement regimes - with other words, there can be a measurement in $\calP^{k}$ which identifies a generic signal, while a generic measurement in $\calP^{k}$ does not identify a generic signal. This does not contradict the above discussion since a generically non-identifying measurement regime does include perturbation-stability with respect to the measurements, while a generically idenfitying measurement does not. Algebraically spoken, $\phi_A$ does not ramify for generic signals $Z$, while $\phi$ is ramified at $(A,Z)$ for all $Z$. An explicit example for this behaviour is given in section~\ref{sec:theorems.ramex}.

\subsection{Transfer Results for Identifyingness}
In this section we will collect different results that allow to transfer identifiyingness properties from one set of potential measurements to another:

\begin{Not}
We will consider irreducible varieties $\calP^{(k)}=\calP_1\times \dots \times \calP_k$ and $\calQ^{(k)}=\calQ_1\times\dots\times \calQ_k$, with corresponding forward maps $\phi,\varphi$.
\end{Not}

The first lemma allows to obtain identifiability for a broader measurement space, if it is already established for a smaller:

\begin{Lem}\label{Lem:widen}
Assume $\calP_i\subseteq \calQ_i$ for all $i$, that is, $\calP^{(k)}\subseteq \calQ^{(k)}$. Then:
\begin{description}
   \item[(i)] If $\calP^{(k)}$ is generically unramified, then so is $\calQ^{(k)}$.
   \item[(ii)] If $\calP^{(k)}$ is generically identifying, then so is $\calQ^{(k)}$.
   \item[(iii)] If $\calP^{(k)}$ is completely identifying, then so is $\calQ^{(k)}$.
\end{description}
\end{Lem}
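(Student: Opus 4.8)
The plan is to reduce all three assertions to one structural observation: for a fixed measurement tuple $A=(A_1,\dots,A_k)\in\calP^{(k)}\subseteq\calQ^{(k)}$ and a signal $Z\in\calS$, both the identifiability of $Z$ from $\phi_A(Z)$ and the unramifiedness of the forward map over the pair $(A,Z)$ are intrinsic to the triple $(A,Z,\calS)$; in particular they are unchanged if we regard $A$ as a point of the larger space $\calQ^{(k)}$ rather than of $\calP^{(k)}$. Granting this, each part becomes an instance of ``a nonempty Zariski-open locus in an irreducible variety is dense'', applied to $\calQ^{(k)}\times\calS$ (which is irreducible, being a product of irreducible varieties) and to $\calQ^{(k)}$ itself.

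I would first establish the observation. For identifiability, note that $\phi$ and $\varphi$ carry the measurement matrices identically into the target, so the fibre of $\varphi$ over $\varphi(A,Z)$ is $\{A\}\times\phi_A^{-1}\phi_A(Z)$; hence $\card{\varphi^{-1}\varphi(A,Z)}=\card{\phi_A^{-1}\phi_A(Z)}$, a quantity referring only to $A$, $Z$ and $\calS$. For unramifiedness, I would compute the differential $d\varphi_{(A,Z)}(\dot A,\dot Z)=\bigl(\dot A,\,(\Tr(\dot Z\cdot A_i)+\Tr(Z\cdot\dot A_i))_{i}\bigr)$. Its first component forces $\dot A=0$ on the kernel, so $d\varphi$ is injective at $(A,Z)$ if and only if the map $\dot Z\mapsto(\Tr(\dot Z\cdot A_i))_{i}$ is injective on $T_Z\calS$. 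This latter condition does not see whether $\dot A$ ranges over $T_A\calP^{(k)}$ or $T_A\calQ^{(k)}$, so $\phi$ is unramified at $(A,Z)$ exactly when $\varphi$ is.

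With the observation in hand I would argue as follows. For (i): generic unramifiedness of $\phi$ yields a point $(A,Z)\in\calP^{(k)}\times\calS$ at which $\phi$ is unramified; by the observation $\varphi$ is unramified there too, so the (open) unramified locus of $\varphi$ is nonempty, hence dense in the irreducible $\calQ^{(k)}\times\calS$, giving generic unramifiedness of $\varphi$. For (ii), part (i) first supplies that $\calQ^{(k)}$ is generically unramified; generic identifyingness of $\calP^{(k)}$ furnishes some $A_0\in\calP^{(k)}$ admitting a perturbation-stably identifiable signal $Z_0$, i.e.\ a pair $(A_0,Z_0)$ that is identifiable and over which $\phi$ is unramified. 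By the observation the same holds for $\varphi$, so $A_0$ is stably generically identifying in $\calQ^{(k)}$ in the sense of Proposition~\ref{Prop:opencond}(i) (via Proposition~\ref{Prop:algcritmatrices}); Corollary~\ref{Cor:genericmeas}(ia) then makes a generic $B\in\calQ^{(k)}$ stably, hence plainly, generically identifying. For (iii) the argument is parallel: a completely identifying $A_0\in\calP^{(k)}$ satisfies ``$(A_0,Z)$ is identifiable and $\phi$-unramified for every $Z\in\calS$'', a property which by the observation transfers verbatim to $\varphi$; this is exactly the open, stably-completely-identifying condition of Proposition~\ref{Prop:opencond}(ii), so Corollary~\ref{Cor:genericmeas}(ib) yields that a generic $B\in\calQ^{(k)}$ is completely identifying.

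The main obstacle is the unramifiedness half of the structural observation: one must check that the measurement-perturbation directions decouple and are mapped isomorphically by the differential, so that enlarging the tangent space from $T_A\calP^{(k)}$ to $T_A\calQ^{(k)}$ cannot spoil injectivity. The remaining care points are routine: invoking irreducibility of $\calQ^{(k)}\times\calS$ so that nonempty open loci are dense, and noting that in (ii) and (iii) the hypotheses already presuppose $\calP^{(k)}$ generically unramified, with the corresponding property for $\calQ^{(k)}$ supplied by part (i) before ``generically/completely identifying'' is even defined there.
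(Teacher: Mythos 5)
Your proposal is correct and follows essentially the same route as the paper's own proof: transfer a single witness pair $(A,Z)\in\calP^{(k)}\times\calS$ that is identifiable with unramified forward map to the larger family, using the fact that the map records the measurement matrices exactly (so the fibre over $\varphi(A,Z)$ is $\{A\}\times\phi_A^{-1}\phi_A(Z)$, intrinsic to $(A,Z,\calS)$), and then invoke the openness/local-to-global machinery on the irreducible variety $\calQ^{(k)}\times\calS$ — your use of Proposition~\ref{Prop:algcritmatrices} and Corollary~\ref{Cor:genericmeas} is interchangeable with the paper's route through Theorem~\ref{Thm:algmeasure} and Proposition~\ref{Prop:opencert}, since the former is proved by the latter. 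The one point where you go beyond the paper is welcome: where the paper simply asserts ``also $\varphi$ is unramified above $(A,Z)$'', your tangent-space computation (the first component of the differential kills $\dot A$, reducing injectivity to $\dot Z\mapsto(\Tr(\dot Z\cdot A_i))_i$ on $T_Z\calS$, a condition blind to whether $\dot A$ ranges over $T_A\calP^{(k)}$ or $T_A\calQ^{(k)}$) actually justifies that step, in line with the scheme-theoretic notion of unramifiedness underlying the EGA results cited in Theorem~\ref{Thm:openconds}.
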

\begin{proof}
(i) follows from Theorem~\ref{Thm:openconds}~(ii) in the appendix. For (ii), the characterization in Theorem~\ref{Thm:algmeasure} yields that is birational. Therefore, there is $(A,Z)\in \calP^{(k)}\times \calS$ above which $\phi$ is unramified and for which $\card{\phi^{-1}\phi(A,Z)}=1$. Since $\phi$ remembers $A$ exactly, this is equivalent to $\card{\varphi^{-1}\varphi(A,Z)}=1$; also $\varphi$ is unramified above $(A,Z)$. We can therefore apply Proposition~\ref{Prop:opencert} to infer that $\varphi$ is birational, which implies the statment by Theorem~\ref{Thm:algmeasure}. (iii) follows in analogy, repeating the argument for all $Z\in\calS$.
\end{proof}
As Example~\ref{Ex:psidvsidlow} shows, the reverse implications do not hold in general.

We prove another lemma, which is specific to the case of matrices stratified by rank, and which will allow to restrict to orthogonal or unitary matrices, once properties of all matrices of fixed rank are established:

\begin{Lem}\label{Lem:orthdescent}
Assume the $\calP_i\subseteq \left(\CC^{n\times n}\right)^\gamma$ are all spaces of rank at most $r_i$ matrices, that is, of the form $\calP_\rho(r_i)$ or $\calP_\CC(r_i)$. Assume that the $\calQ_i$ are the corresponding variety of orthogonal/unitary projection matrices of rank exactly $r_i$. Then:
\begin{description}
   \item[(i)] $\calP^{(k)}$ is generically identifying if and only if $\calQ^{(k)}$ is.
   \item[(ii)] $\calP^{(k)}$ is completely identifying if and only if $\calQ^{(k)}$ is.
\end{description}
\end{Lem}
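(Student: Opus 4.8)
The plan is to prove each equivalence as two implications, handling the ``easy'' inclusion by Lemma~\ref{Lem:widen} and the ``descent'' to projectors by a genericity argument. First I would record that the rank-$r$ orthogonal (resp.\ unitary) projectors sit inside the rank-$\le r$ matrices: a symmetric idempotent of rank $r$ can be written $U^\top U$ with $U\in\CC^{r\times n}$, $UU^\top=I_r$, so $\calQ_i\subseteq\calP_\rho(r_i)$, and analogously $\calQ_i\subseteq\calP_\CC(r_i)$ componentwise in the complex case. Since the $\calQ_i$ are irreducible, Lemma~\ref{Lem:widen}, applied with the projector variety playing the role of the \emph{smaller} space, immediately gives ``$\calQ^{(k)}$ (generically / completely) identifying $\Rightarrow\calP^{(k)}$ (generically / completely) identifying'', settling one direction of both (i) and (ii).

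For the reverse ``descent'' direction I would exploit two identifiability-preserving symmetries. Per-measurement rescaling $A_i\mapsto\lambda_i A_i$ with $\lambda_i\in\CC^\ast$ leaves every fibre of $\phi_A$ unchanged, hence preserves (perturbation-stable) identifiability and the identifyingness grade of the tuple; and the simultaneous congruence $A_i\mapsto g^\top A_i g$ together with the signal relabelling $Z\mapsto g^{-1}Zg^{-\top}$ ($g\in GL_n$) is an automorphism of the forward map, since $\Tr(g^{-1}Zg^{-\top}\cdot g^\top A_i g)=\Tr(ZA_i)$, and it preserves $\calS$ and each $\calP_i$. By the local-to-global principle (Corollary~\ref{Cor:genericmeas}, whose generic-unramifiedness hypothesis for $\calQ^{(k)}$ is supplied by Proposition~\ref{Prop:unrphi}), it then suffices to exhibit a \emph{single} perturbation-stably (generically, resp.\ completely) identifying projector tuple, given that the identifying tuples form a nonempty Zariski-open, hence dense, subset $\calU\subseteq\calP^{(k)}$. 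For $r=1$ this is immediate: a generic rank-$1$ matrix $ww^\top$ has $w^\top w\ne0$, so $ww^\top=(w^\top w)\,vv^\top$ with $v=w/\sqrt{w^\top w}$ a unit vector; thus a generic element of $\calP_\rho(1)$ is a rescaling of a projector, and the scaling symmetry alone moves $\calU$ into $\calQ^{(k)}$.

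The main obstacle is the descent for $r\ge2$, where these symmetries no longer suffice: the orbit of $\calQ^{(k)}$ under congruence and rescaling has dimension at most $n^2+\sum_i\bigl(r_i(n-r_i)+1\bigr)$, which for $r_i\ge2$ and $k$ large is strictly smaller than $\dim\calP^{(k)}=\sum_i\bigl(r_in-\binom{r_i}{2}\bigr)$, so the orbit is not dense and one cannot deduce $\calU\cap\calQ^{(k)}\ne\emptyset$ from invariance alone. To bypass this I would argue through the incidence variety $\calI=\{(A,Z,Z'):Z\ne Z'\in\calS,\ \Tr\bigl((Z-Z')A_i\bigr)=0\ \forall i\}$, whose dominance over the measurement space is, by the birationality criterion of Theorem~\ref{Thm:algmeasure}, equivalent to failure of generic identifiability; the analogous $\calI_Q$ governs the projector case. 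The decisive non-degeneracy is that \emph{neither} $\calP_i$ \emph{nor} $\calQ_i$ is contained in any hyperplane $W^\perp=\{A:\Tr(WA)=0\}$ with $W\ne0$, because the rank-$r$ projectors already linearly span the full space of symmetric matrices, exactly as all rank-$r$ matrices do (in the complex case this is checked on each of the two components, with the convention $\Tr(Z\cdot A)=\Tr(XB+YC)$). Consequently, for each fixed difference $W=Z-Z'\ne0$ the ``bad'' locus is a proper hyperplane section of codimension exactly one in every factor of both $\calP^{(k)}$ and $\calQ^{(k)}$; fibring $\calI$ and $\calI_Q$ over the pairs $(Z,Z')$ then yields $\dim\calI-\dim\calP^{(k)}=\dim\calI_Q-\dim\calQ^{(k)}$, so the two dominance conditions, and hence generic identifyingness, coincide.

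I expect the remaining effort to lie in upgrading this equal-dimension statement to a genuine equivalence of dominance — establishing irreducibility and expected-dimensional fibres of $\calI$ and $\calI_Q$ so that no component of the image collapses — and in re-running the same hyperplane-section bookkeeping with the full, rather than the generic, fibre to obtain the completely identifying statement (ii); once one identifying projector tuple is produced this way, Corollary~\ref{Cor:genericmeas} promotes it to the generic statement and closes the descent.
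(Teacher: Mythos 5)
Your first direction (embedding $\calQ_i\subseteq\calP_i$ and invoking Lemma~\ref{Lem:widen}) is fine and matches one half of what the paper needs. The descent direction, however, has a genuine gap at its decisive step: the inference ``$\dim\calI-\dim\calP^{(k)}=\dim\calI_Q-\dim\calQ^{(k)}$, so the two dominance conditions coincide'' is not valid. Dominance of the projection from an incidence variety is governed by the fibre dimensions of that projection, which your hyperplane-section bookkeeping does not control: the total dimension of $\calI$ is compatible with both a dominant projection with small fibres and a non-dominant one with large fibres. This is not a removable technicality in the present problem. For (ii) one must decide dominance of $\calI\rightarrow\calP^{(k)}$, and the condition $\Tr\bigl((Z-Z')A_i\bigr)=0$ is invariant under the simultaneous scaling $(Z,Z')\mapsto(tZ,tZ')$, so every fibre of this projection contains a positive-dimensional orbit and the count is never ``expected-dimensional.'' Concretely, in the real rank-one case ($\dim\calS_\rho=n$, $\kappa=2n-1$) your bookkeeping gives $\dim\calI=2n+\dim\calP^{(k)}-k$, i.e.\ $\dim\calP^{(k)}+2$ at $k=2n-2$ and $\dim\calP^{(k)}+1$ at $k=2n-1$; in both cases $\dim\calI$ exceeds the base, yet the projection is dominant in the first case (complete identifiability fails) and non-dominant in the second (it holds). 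So identical dimension data produce opposite dominance behaviour, and your closing paragraph's ``remaining effort'' of upgrading the equal-dimension statement is in fact the entire difficulty. (A smaller imprecision: for generic identifyingness the relevant dominance is of $\calI\rightarrow\calP^{(k)}\times\calS$, not of $\calI\rightarrow\calP^{(k)}$; only the complete case projects to the measurement space alone.)

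You also dismissed the symmetry route too quickly, and thereby missed the paper's actual argument. Your orbit-dimension computation only rules out mapping $\calQ^{(k)}$ onto a dense subset of $\calP^{(k)}$ by a single simultaneous congruence-plus-rescaling action; the paper never attempts that. Instead it uses that over $\CC$ every symmetric matrix of rank $r$ is congruent to a rank-$r$ orthogonal projector, $U=S^\top A S$, and that the simultaneous substitution $A_i\mapsto S^\top A_i S$, $z\mapsto S^{-1}z$ leaves all measurement values unchanged, hence preserves genericity of the signal and the identifyingness grade of the tuple. One then converts the factors \emph{one at a time}: make $A_1$ into a projector $U_1$; the transformed $S^\top A_jS$, $j\ge2$, are still generic and independent of $U_1$, so they may be re-chosen generically, and the argument is repeated $k$ times (for (ii) one notes identifiability is required for all $z$ but only generic tuples). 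This chain of grade-preserving equivalences never requires the projector tuples to sweep out $\calP^{(k)}$, which is exactly why the orbit-dimension obstruction you computed is irrelevant to it. Your $r=1$ rescaling observation is the special case of this mechanism, but the congruence version works uniformly for all $r_i$, for both $\calP_\rho(r_i)$ and $\calP_\CC(r_i)$, and yields (i) and (ii) simultaneously.
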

\begin{proof}
Let $A_1,\dots, A_k$ be generic in $\calP_1,\dots, \calP_k$; we will treat the $A_i$ as single matrices. Since $z^* A z = \frac{1}{2} z^* \left(A^* + A\right) z$, we can assume that the $A_i$ are symmetric/Hermitian and generic. (i) $(A_1,\dots, A_k)$ are generically identifying if for generic $z$, one can reconstruct $z$ up to phase/sign from the $z^* A_i z$. By definition, there is an invertible matrix $S_1$ such that $U_1 = S_1^* A_1 S_1$ is an orthogonal/unitary projector of rank $a_i$. Since $S_1$ is invertible, a vector $z$ is generic if and only if the vector $S_1\cdot z$ is generic, therefore $(A_1,\dots, A_k)$ is generically identifying if and only if $(U_1, S_1^* A_2 S_1,\dots, S_1^* A_k S_1)$ is generically identifying. Since $A_j, j\ge 2$ was generic, the matrices $S_1^* A_2 S_1,\dots, S_1^* A_k S_1$ are also generic, and independent of $U_1$ therefore they can be replaced anew by generic $A_2,\dots, A_k$. Repeating the argument $k$ times yields the claim. The proof for (ii) is analogous, noting that identifiability holds for generic $(A_1,\dots, A_k)$, but all $z$.
\end{proof}

In our terminology, Proposition~\ref{Prop:opencond} also implies that the behavior of random projectors is completely determined by their number, and no other properties. This motivates the following:
\begin{Def}\label{Def:thres}
Consider an arbitrary family of irreducible varieties $\calP_i, i\in \NN.$ We will denote
\begin{description}
   \item[(i)] in case of existence, the smallest number $k$ such that $(\calP_1,\dots, \calP_k)$ is generically identifying by $\lambda (\calP_1,\calP_2,\dots)$. We will denote the number, if clear from the context, by $\lambda (\calP)$, and call it the \emph{generic identifiability threshold}.
   \item[(ii)] in case of existence, the smallest number $k$ such that $(\calP_1,\dots, \calP_k)$ is completely identifying by $\kappa (\calP_1,\calP_2,\dots)$. We will denote the number, if clear from the context, by $\kappa (\calP)$, and call it the \emph{complete identifiability threshold}.
\end{description}
If $\calP_i=\calX$ for all $i$, for some variety $\calX$, we also write $\lambda(\calX)$ and $\kappa (\calX)$ instead of $\lambda(\calP_1,\calP_2,\dots)$ and $\kappa (\calP_1,\calP_2,\dots)$.
\end{Def}

\subsection{From Complex to Real Identifiability}
\label{sec:theorems.descent}
Before deriving identifiability statements in the given terminology, we briefly derive results which allow to return to the original phase retrieval problem~\ref{Prob:original}; that is, we state the principle of excluded middle for real measurements and signals. It implies that the conclusions of our main theorems~\ref{Thm:algsignalshort} and~\ref{Thm:algmeasureshort} hold for the non-algebraized, real formulation as well:

\begin{Prop}\label{Prop:proboneR}
Write $\calS_\RR:=\calS\cap (\RR^{n\times n})^\gamma$ and $\calP_\RR:=\left(\calP_1\cap (\RR^{n\times n})^\gamma\right)\times \dots \times \left(\calP_k\cap (\RR^{n\times n})^\gamma\right)$ for their real parts. Assume that $\calS$ and $\calP$ are observable over the reals (as defined in appendix~\ref{app:realcompgen}). Then, the following statements, about identifying signals $Z\in\calS$ from $\Tr(Z\cdot A_1),\dots, \Tr(Z\cdot A_1)$ hold:
\begin{description}
   \item[(i)] If $(A_1,\dots, A_k)\in\calP_\RR$ is not generically identifying (viewed as an element of $\calP$), then no signal $Z\in\calS_\RR$ can be perturbation-stably identified.
   \item[(ii)] If $(A_1,\dots, A_k)\in\calP_\RR$ is generically identifying (viewed as an element of $\calP$), then a generic signal $Z\in\calS_\RR$ can be perturbation-stably identified.
   \item[(iii)] If $(A_1,\dots, A_k)\in\calP_\RR$ is completely identifying (viewed as an element of $\calP$), then all signals $Z\in\calS_\RR$ can be perturbation-stably identified.
   \item[(iv)] If $\calP$ is not generically identifying, then no signal $Z\in\calS_\RR$ can be perturbation-stably identified by a generic $(A_1,\dots, A_k)\in\calP_\RR$.
   \item[(v)] If $\calP$ is generically identifying, then a generic signal $Z\in\calS_\RR$ can be perturbation-stably identified by a generic $(A_1,\dots, A_k)\in\calP_\RR$.
   \item[(vi)] If $\calP$ is completely identifying, then all signals $Z\in\calS_\RR$ can be perturbation-stably identified by a generic $(A_1,\dots, A_k)\in\calP_\RR$.
\end{description}
Here, in the statements above, generic means that all objects fulfill the statement, except possibly a Hausdorff measure zero set, where the Hausdorff measure is taken to be positive only on the highest dimensional components.\\

Furthermore, all statements hold when replacing $\calS_\RR$ by any positive measure subset $\calS'_\RR$ such that the Zariski closure of $\calS'_\RR$ is $\calS_\RR$, or replacing $\calP_\RR$ by any positive measure subset $\calP'_\RR$ such that the Zariski closure of $\calP'_\RR$ is $\calP_\RR$.
\end{Prop}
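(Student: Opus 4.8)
The plan is to descend the complex, Zariski-theoretic characterizations established above --- the signal-level Theorem~\ref{Thm:algsignalshort} and Proposition~\ref{Prop:algcritsignal}, and the measurement-level Theorem~\ref{Thm:algmeasureshort}, Proposition~\ref{Prop:opencond} and Corollary~\ref{Cor:genericmeas} --- to the real loci $\calS_\RR$ and $\calP_\RR$. From the observability hypothesis of Appendix~\ref{app:realcompgen} I would extract two consequences. First, that $\calS_\RR$ (resp.\ $\calP_\RR$) is Zariski dense in $\calS$ (resp.\ $\calP$), so that a non-empty complex Zariski-open set restricts to a real-Zariski-dense, full-Hausdorff-measure subset, while a proper complex Zariski-closed set restricts to a Hausdorff-null subset. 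Second, a fiber-comparison statement: for generic real data the real preimages under $\phi_A$ exhaust the complex preimages, so that generic real identifiability is equivalent to generic complex identifiability. The forward statements (ii), (iii), (v), (vi) will need only the first consequence, together with the trivial implications ``unique complex preimage $\Rightarrow$ unique real preimage'' and ``unramified over $\CC$ $\Rightarrow$ unramified over $\RR$'' (the differential of $\phi_A$ at a real point is a matrix with real entries, hence has the same rank over both fields); the non-identifiability statements (i) and (iv) will need the second, deeper consequence.

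For (iii), if $A\in\calP_\RR$ is completely identifying over $\CC$, then by Theorem~\ref{Thm:algsignalshort}(c) every $Z\in\calS$ is perturbation-stably identifiable over $\CC$, i.e.\ has a single complex preimage and $\phi_A$ is unramified there; restricting to $Z\in\calS_\RR$ forces a single real preimage and real unramifiedness, and since the property holds for all nearby signals (in particular real ones), $Z$ is perturbation-stably identifiable over $\RR$. For (ii), generic identifiability of $A$ over $\CC$ makes the set $U\subseteq\calS$ of perturbation-stably identifiable signals a non-empty Zariski-open set (Proposition~\ref{Prop:algcritsignal}); by Zariski density of $\calS_\RR$, the intersection of $U$ with $\calS_\RR$ is real-open, dense, and of full Hausdorff measure, and each of its points inherits real perturbation-stable identifiability exactly as in (iii). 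No fiber comparison is needed, since we only pass from complex to real identifiability.

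The crux is (i): assuming $A$ is not generically identifying over $\CC$, I must rule out that some real signal is nonetheless perturbation-stably identifiable over $\RR$. Because $\phi_A$ is generically unramified (the standing hypothesis), non-generic-identifiability over $\CC$ means the generic complex fiber of $\phi_A$ carries at least two points. The danger is that the extra preimages of a generic real datum could occur in complex-conjugate pairs, leaving a \emph{unique} real preimage and thereby spuriously identifying a generic real signal stably under real perturbations. Ruling this out is exactly the content of observability: it equates the generic real and generic complex fiber cardinalities (and supplies the corresponding real local-to-global principle), so that the existence of a single real perturbation-stably identifiable signal would upgrade to generic complex identifiability, contradicting the hypothesis. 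This descent of non-identifiability is the main obstacle; everything else is density bookkeeping around it.

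Finally, (iv)--(vi) follow by first descending the measurement-level trichotomy. Using Theorem~\ref{Thm:algmeasureshort}, Proposition~\ref{Prop:opencond} and Corollary~\ref{Cor:genericmeas}: in case (c) the completely identifying $A$ form a non-empty Zariski-open subset of $\calP$, in case (b) the generically identifying $A$ likewise, and in case (a) the stably non-identifying $A$ form a Zariski-open dense subset whose complement (the closure of the generically identifying locus) is proper. Invoking Zariski density of $\calP_\RR$ in $\calP$, a generic real $A\in\calP_\RR$ inherits the corresponding complex property, whereupon (vi) follows from (iii), (v) from (ii), and (iv) from (i). The closing addendum --- that $\calS_\RR$ may be replaced by any positive-measure $\calS'_\RR$ with Zariski closure $\calS_\RR$, and similarly for $\calP_\RR$ --- is immediate, since every exceptional set produced above is a proper real-Zariski-closed (hence Hausdorff-null) set, which stays null and proper when intersected with any such $\calS'_\RR$, while every full-measure set remains full-measure there.
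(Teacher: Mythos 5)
Your treatment of the positive statements (ii), (iii), (v), (vi) and of the closing addendum is essentially the paper's own route: the property of being perturbation-stably identifiable (resp.\ stably generically/completely identifying) is Zariski open by Propositions~\ref{Prop:algcritsignal}, \ref{Prop:algcritmatrices} and~\ref{Prop:opencond}, and genericity descends from $\calS$ to $\calS_\RR$ (and $\calP$ to $\calP_\RR$) by observability --- this is exactly Theorem~\ref{Thm:genreal}. One small caveat there: Zariski density of the real points (which here does follow, via irreducibility and Proposition~\ref{Prop:realdefobs}~(iii)) is not by itself enough to conclude ``full Hausdorff measure''; you need the dimension comparison $\dim \calZ_\RR \le \dim \calZ < \dim\calX = \dim\calX_\RR$ of Lemma~\ref{Lem:cmprl}, which is precisely how Theorem~\ref{Thm:genreal} is proved. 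That is a presentational gloss, not a gap.

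The genuine gap is in your ``crux'' for (i) (and hence (iv)). You read ``perturbation-stably identified'' as a real-intrinsic notion (unique \emph{real} preimage, stable under \emph{real} perturbation), and then attempt to rule out conjugate-pair preimages by claiming that observability ``equates the generic real and generic complex fiber cardinalities.'' Observability is only the dimension statement $\dim\calX_\RR = \dim\calX$ and implies no such fiber comparison; the claim is in fact false. Consider $\phi:\CC\to\CC$, $z\mapsto z^3$: the variety $\CC$ is observable over the reals and $\phi$ is generically unramified, the generic complex fiber has three points, yet every real value has exactly one real preimage, stably so under real perturbation. So under your real-intrinsic reading, statement (i) would not even be provable by this route. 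The proposition avoids the issue because perturbation-stable identifiability was \emph{defined} ambiently on the complex variety $\calS$ (fibers of $\phi_A$ taken in $\calS$, neighborhoods in $\calS$); with that reading, (i) is immediate: if $A$ is not generically identifying, Theorem~\ref{Thm:algsignalshort}~(a) says \emph{no} signal in $\calS$ is perturbation-stably identifiable, a fortiori none in $\calS_\RR\subseteq\calS$. Likewise (iv) follows directly from Proposition~\ref{Prop:algcritmatrices} and Proposition~\ref{Prop:opencond}: if some pair $(A,Z)$ with real entries were stably identifiable, $\calP$ would be generically identifying. This is what the paper means by (i) and (iv) ``following already from the definitions'': the negative statements are the trivial ones, and the descent machinery (Theorem~\ref{Thm:genreal}) is needed only for the positive ones --- the exact opposite of the weighting in your proposal.
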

\begin{proof}
Statements (i) and (iv) follow already from the definitions, and Proposition~\ref{Prop:opencond}. The other numbered statements are implied by Theorem~\ref{Thm:genreal} in the appendix, noting that all properties above (or their negations) are algebraic. The last statement follows from the fact that if a set $V\subseteq \calS_\RR$ containing no, generic, or all elements of $\calS_\RR$, the set $V\cap \calS'_\RR$ contains no, generic, or all elements of $\calS'_\RR$, and the analogue for $\calP_\RR$ and $\calP'_\RR$.
\end{proof}

\begin{Rem}
There are several remarks to make:
\begin{description}
   \item[(i)] The assumption that $\calP$ and $\calS$ are observable over the reals are valid for our prototypical cases, see Proposition~\ref{Prop:irrvarieties}.
   \item[(ii)] An analogue of Proposition~\ref{Prop:proboneR} could be derived for the projections $A_i$ allowed to be complex and $z$ restricted to real signals or vice versa.
\end{description}
\end{Rem}

\subsection{Identifiability of Real Signals}

For the reals, there are natural lower bounds on the identifiability thresholds:

\begin{Prop}\label{Prop:lowerbound}
Consider identifiability from real signals $\calS=\{zz^\top, z\in\CC^n\}.$ For any family of irreducible varieties $\calP_i\subseteq \CC^{n\times n}, i\in \NN,$ with $n\ge 2$, it holds that $\kappa(\calP)\ge \lambda(\calP)$, and $\lambda(\calP)\ge n+1$.
\end{Prop}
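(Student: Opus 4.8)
The plan is to prove the two inequalities separately. The inequality $\kappa(\calP)\ge\lambda(\calP)$ is purely formal: at $k=\kappa(\calP)$ the tuple $(\calP_1,\dots,\calP_k)$ is completely identifying, so every signal, in particular a generic one, is identifiable; hence it is also generically identifying, and since $\lambda(\calP)$ is the least such $k$ we obtain $\lambda(\calP)\le\kappa(\calP)$ (if $\kappa(\calP)$ does not exist the inequality is vacuous). The substance is therefore the bound $\lambda(\calP)\ge n+1$, which I would obtain by combining a dimension count (giving $\ge n$) with a degree count (excluding $k=n$).

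For the dimension count, note that $\calS=\{zz^\top:z\in\CC^n\}$ is the image of the generically $2{:}1$ map $z\mapsto zz^\top$, whose fibre over $zz^\top$ is $\{z,-z\}$, so that $\dim\calS=n$. Each measurement $Z\mapsto\Tr(Z\cdot A_i)$ is linear, so $\phi_A\colon\calS\to\CC^k$ is the restriction of a linear map; if a generic signal is identifiable then the generic fibre of $\phi_A$ is a single point, forcing $\dim\overline{\phi_A(\calS)}=\dim\calS=n$ and hence $k\ge n$. This already yields $\lambda(\calP)\ge n$.

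To exclude the borderline case $k=n$, I would pass to the composite $\Psi_A\colon\CC^n\to\CC^n$, $z\mapsto(z^\top A_1z,\dots,z^\top A_nz)$, taking each $A_i$ symmetric (legitimate since $z^\top A_iz=\tfrac12 z^\top(A_i+A_i^\top)z$). As $z\mapsto zz^\top$ is $2{:}1$, generic identifiability of $\calS$ by $\phi_A$ is equivalent to $\Psi_A$ being generically $2{:}1$. Now $\Psi_A$ is given by $n$ quadrics: homogenising $z^\top A_iz=c_i$ to $z^\top A_iz=c_iw^2$ in $\PP^n$ and applying B\'ezout, the generic fibre has $2^n$ points, and the involution $z\mapsto-z$ then makes the induced map $\calS\to\CC^n$ generically $2^{n-1}{:}1$. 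For $n\ge2$ this exceeds $1$, so $\phi_A$ is not generically injective, $k=n$ is not generically identifying, and $\lambda(\calP)\ge n+1$. The conclusion is transported back to the original real problem by Proposition~\ref{Prop:proboneR}.

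The step I expect to be the genuine obstacle is this B\'ezout count: one must ensure that for a generic measurement tuple the $n$ quadrics $z^\top A_iz$ meet in the full number $2^n$ of affine points, i.e.\ that they have no common zeros at infinity and no excess base locus that would drop the generic degree to $1$ (equivalently, that the $A_i$ do not already generate the whole even subfield $\CC(z)^{\mathrm{even}}$). This is precisely where the non-degeneracy of the measurement space must be invoked; concretely it reflects the fact that the second Veronese variety of $\PP^{n-1}$ has degree $2^{n-1}\ge2$, so that a general linear projection of it onto $\PP^{n-1}$ is finite of degree $>1$ rather than birational.
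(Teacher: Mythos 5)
Your formal derivation of $\kappa(\calP)\ge\lambda(\calP)$, your dimension count ruling out $k\le n-1$ (which is indeed valid for \emph{every} tuple, with no genericity needed), and your B\'ezout computation ($2^n$ affine solutions for generic symmetric $A_i$, hence $2^{n-1}\gneq 1$ signals after the sign involution) are all correct, and the counting is exactly the one at the heart of the paper's proof. But there is a genuine gap at precisely the point you flag, and the fix you sketch cannot work as stated. The proposition quantifies over \emph{arbitrary} irreducible families $\calP_i\subseteq\CC^{n\times n}$, so ``a generic measurement tuple'' means generic \emph{within that family}, whose members may be highly special matrices (rank-one projectors, say). For such families the non-degeneracy your B\'ezout step needs -- no common zeros at infinity and a reduced fibre of full degree $2^n$ -- is not merely hard to verify inside the family: it can outright fail. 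The paper's own Example~\ref{Ex:ramex} exhibits a specific $n$-tuple that \emph{is} generically identifying; there the homogenised quadrics have a positive-dimensional intersection at infinity and the affine fibre collapses to $\{\pm z\}$. So ``the quadrics of a generic member of the family cut out $2^n$ points'' is false in general, and no amount of non-degeneracy analysis of the given $\calP$ (your ``even subfield''/Veronese-projection considerations) can close the argument for arbitrary families.

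The paper closes this step by a different mechanism, and this is the missing idea: it argues by contradiction that no tuple $(A_1,\dots,A_n)\in\left(\CC^{n\times n}\right)^n$ can be \emph{stably} generically identifying, i.e.\ identifying stably under perturbations in the \emph{full ambient space} of matrices. By Proposition~\ref{Prop:algcritmatrices} -- the Zariski-openness/local-to-global principle, applied with $\calX=\left(\CC^{n\times n}\right)^n\times\calS$ -- the existence of a single such stable tuple would force a generic tuple of the full matrix space to be generically identifying; only at that point, with the $A_i$ generic among \emph{all} symmetric matrices, does your B\'ezout count become legitimate and deliver the contradiction. In other words, the genericity needed for B\'ezout is manufactured by the perturbation-stability formalism, not verified within the family; this is also why the paper's remark immediately after the proof (``there are $(A_1,\dots,A_n)$ which are generically identifying but not stably so'') and Example~\ref{Ex:ramex} emphasise that the distinction between identifying and stably identifying tuples is not vacuous. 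To repair your write-up, replace the final paragraph by this reduction: assume some size-$n$ regime is stably generically identifying, invoke Proposition~\ref{Prop:algcritmatrices} to pass to a full-space generic tuple, and then run your B\'ezout argument verbatim.
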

\begin{proof}
If suffices to show that no $(A_1,\dots, A_n)\in \left(\CC^{n\times n}\right)^n$ can be stably generically identifying. We proceed by contradiction and assume the contrary. Proposition~\ref{Prop:algcritmatrices} then implies that $\left(\CC^{n\times n}\right)^n$ is generically identifying, so we may replace $A_1,\dots, A_n$ by a generic choice in $\left(\CC^{n\times n}\right)^n$. Fixing $z^\top A_1 z,\dots, z^\top A_n z$ yields $n$ equations on $z$, of degree $2$. By Bezout's theorem, and using that the $A_i$ are generic, those equations have $2^n$ solutions. Sign ambiguity leaves $2^{n-1}\gneq 1$ solutions, yielding a contradiction.
\end{proof}

Note that there are $(A_1,\dots, A_n)$ which are generically identifying but not stably so.

We now summarize some results which can be readily inferred from literature for real signals:

\begin{Thm}\label{Thm:knownthresreal}
Consider identifiability from real signals, corresponding to the complex signal variety $\calS_\rho=\{zz^\top, z\in\CC^n\}$, and projectors $\calP =\calS$. Then:
$$\lambda (\calP)=n+1,\quad\mbox{and}\quad \kappa(\calP) = 2n-1.$$
\end{Thm}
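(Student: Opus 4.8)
The plan is to compute the two thresholds directly inside the complex-algebraic framework, since with $\calP=\calS=\calP_\rho(1)$ the measurements are rank-one and $\Tr(zz^\top\, aa^\top)=(a^\top z)^2$, so the problem is exactly the algebraization of classical real phase retrieval and the two numbers can be extracted from the counting and complement-property arguments underlying \cite{Balan:2006fk}. First I would record that $\calP_\rho(1)=\calS_\rho$ is irreducible and that $\phi$ is generically unramified (Propositions~\ref{Prop:irrvarieties} and~\ref{Prop:unrphi}), so that Theorem~\ref{Thm:algmeasureshort} and Corollary~\ref{Cor:genericmeas} apply and the descent to genuine real signals is automatic via Proposition~\ref{Prop:proboneR}.

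For $\lambda(\calP)=n+1$, the lower bound $\lambda(\calP)\ge n+1$ is already supplied by Proposition~\ref{Prop:lowerbound}. For the upper bound it suffices, by Corollary~\ref{Cor:genericmeas}(ia), to produce a single perturbation-stably identifying regime of size $n+1$. I would take $a_1,\dots,a_n$ the standard basis and $a_{n+1}=(1,\dots,1)^\top$, so that the measurements of $z$ are $z_1^2,\dots,z_n^2$ together with $(\sum_i z_i)^2$. The first $n$ values determine each $z_i$ up to sign, giving candidates $z^{(\epsilon)}$ with $\epsilon\in\{\pm1\}^n$; the last forces $\sum_i\epsilon_i|z_i|=\pm\sum_i z_i$, and for generic $z$ the numbers $|z_1|,\dots,|z_n|$ are independent across sign patterns, so only $\epsilon=\pm\sgn(z)$ survive. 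Hence a generic $z$ is identifiable, and the differential of $z\mapsto(z_1^2,\dots,z_n^2,(\sum z_i)^2)$ is injective on the tangent space at generic $z$, so $\phi$ is unramified there; Proposition~\ref{Prop:algcritmatrices} then certifies perturbation-stable identifiability, giving $\lambda(\calP)\le n+1$.

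For $\kappa(\calP)=2n-1$ I would argue through the complement property over the symmetric bilinear form. Two signals yield equal measurements exactly when $a_i^\top(z_1-z_2)\cdot a_i^\top(z_1+z_2)=0$ for every $i$, so non-identifiability of a pair $z_1\ne\pm z_2$ amounts to a partition $[k]=S\sqcup S^c$ together with nonzero $u,v$ such that $v\perp\spn\{a_i:i\in S\}$ and $u\perp\spn\{a_i:i\in S^c\}$. For the lower bound $\kappa(\calP)\ge 2n-1$, any $2n-2$ vectors (generic or not) split into two blocks of size $n-1$, each spanning a proper subspace, so such nonzero $u,v$ always exist and no $2n-2$ measurements are completely identifying. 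For the upper bound, $2n-1$ vectors in generic position have every $n$-subset spanning, so for any $S$ either $S$ or $S^c$ has at least $n$ elements and spans $\CC^n$, forcing $u=0$ or $v=0$; hence all signals are identifiable. Theorem~\ref{Thm:algmeasureshort}(c) promotes ``all identifiable'' to ``all perturbation-stably identifiable'' and Corollary~\ref{Cor:genericmeas}(ib) propagates it to the generic regime, giving $\kappa(\calP)\le 2n-1$.

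The main obstacle is not the counting but the two places where the real-field literature must be adapted to the complex-algebraic model: verifying that the sign-resolution genericity (independence of the $|z_i|$ across sign patterns) is a genuinely Zariski-generic condition on $z$, and checking that the complement-property equivalence survives replacing the Hermitian inner product by the symmetric bilinear form, so that a full-dimensional $\spn\{a_i\}$ really forces $u=0$ or $v=0$. Both are routine once phrased correctly, and the remaining bookkeeping---passing between identifying and perturbation-stably identifying, and descending to real signals---is entirely absorbed by Theorem~\ref{Thm:algmeasureshort}, Corollary~\ref{Cor:genericmeas}, and Proposition~\ref{Prop:proboneR}.
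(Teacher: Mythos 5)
Your proposal is correct, but it takes a genuinely different route from the paper. The paper's proof is a two-line lift-from-the-literature argument: it cites \cite[Theorem~2.9]{Balan:2006fk} for $\lambda(\calP)\le n+1$ and \cite[Theorem~2.2 and Corollary~2.7]{Balan:2006fk} for $\kappa(\calP)=2n-1$, both proved there for \emph{real} signals and real rank-one frames, and then transfers these statements to the complex Zariski closures $\calS,\calP$ of the real loci via the openness/stability machinery (Proposition~\ref{Prop:opencond}, i.e.\ real certificates are Zariski-dense certificates for the complex variety), finishing with Proposition~\ref{Prop:lowerbound} for $\lambda(\calP)\ge n+1$. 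You instead reprove the substance of the cited results directly over $\CC$ in the symmetric-bilinear model: an explicit stably identifying regime of size $n+1$ (standard basis plus the all-ones vector, with a sign-pattern elimination and an injective-differential check feeding into Proposition~\ref{Prop:algcritmatrices} and Corollary~\ref{Cor:genericmeas}), and the complement-property dichotomy $a_i^\top(z_1-z_2)\cdot a_i^\top(z_1+z_2)=0$ for both bounds on $\kappa$. What your approach buys: it is self-contained, and your $\kappa\ge 2n-1$ argument is actually cleaner than the paper's, since it rules out \emph{every} complex $(2n-2)$-tuple at once, whereas the paper's transfer of the real lower bound to generic complex tuples is left implicit (it needs Zariski-density of real points plus openness of stable complete identifiability). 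What the paper's route buys is brevity and a template that transfers any future real-frame bound to the complexified setting without re-proving it. Two small repairs to your write-up: over $\CC$ the candidates recovered from $z_i^2$ are $\pm z_i$, not $|z_i|$, so the sign-resolution condition should be stated as the Zariski-open linear conditions $\sum_i(\epsilon_i\mp 1)z_i\neq 0$ for $\epsilon\neq\pm\mathbf{1}$; and the promotion of ``all identifiable'' to ``all perturbation-stably identifiable'' at a fixed regime is Theorem~\ref{Thm:algsignalshort}(c), with the passage to a \emph{stably} completely identifying regime supplied by Proposition~\ref{Prop:opencond}(ii) (your generic-position condition on the $2n-1$ vectors being Zariski open), rather than Theorem~\ref{Thm:algmeasureshort} directly.
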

\begin{proof}
Note that once we have identifiability for signals $\calS'=\{zz^\top, z\in\RR^n\}$ and projectors $\calP'=\calS'$, we can use Proposition~\ref{Prop:opencond} to obtain the statement for the Zariski closure $\calS$ of $\calS'$ and $\calP$ of $\calP'$. So
$\lambda(\calP)\le n+1$ can be inferred from~\cite[Theorems~2.9]{Balan:2006fk}, and $\kappa(\calP) = 2n-1$ from~\cite[Theorem~2.2 and Corollary~2.7]{Balan:2006fk}. Combined with Proposition~\ref{Prop:lowerbound}, we obtain the statement.
\end{proof}

By virtue of Lemma~\ref{Lem:widen}, these results can immediately be broadened to include general linear projections, while Lemma~\ref{Lem:orthdescent} yields the case of orthogonal measurements:

\begin{Thm}\label{Thm:genlinthresreal}
Consider identifiability from real signals, corresponding to the complex signal variety $\calS=\{zz^\top, z\in\CC^n\}$, and the family $\calP_i =\{P^\top \cdot P\;:\;P\in\CC^{r_i\times n}\}, i\in\NN$ of projectors of potentially different ranks $r_i\ge 1$. Then:
$$
\lambda (\calP)=n+1,\quad\mbox{and}\quad \kappa(\calP) = 2n-1.
$$
The result remains unaltered if the projectors $\calP$ are restricted to be orthogonal.
\end{Thm}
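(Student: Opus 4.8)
The plan is to read off both thresholds by combining the rank-one result of Theorem~\ref{Thm:knownthresreal} with the transfer lemmas for the upper bounds, and to supply a separate determinantal argument for the complete-identifiability lower bound; the orthogonal case then comes for free from Lemma~\ref{Lem:orthdescent}. Throughout I keep the signal variety $\calS=\calS_\rho$ fixed and vary only the measurement spaces.

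\emph{Upper bounds.} Since each $r_i\ge 1$, the rank-one variety satisfies $\calS_\rho=\calP_\rho(1)\subseteq \calP_\rho(r_i)=\calP_i$, hence $\calP_\rho(1)^{k}\subseteq \calP^{(k)}$ for every $k$. By Theorem~\ref{Thm:knownthresreal} the family $\calP_\rho(1)^{n+1}$ is generically identifying and $\calP_\rho(1)^{2n-1}$ is completely identifying; Lemma~\ref{Lem:widen}(ii),(iii) transfer these properties to the larger families $\calP^{(n+1)}$ and $\calP^{(2n-1)}$, giving $\lambda(\calP)\le n+1$ and $\kappa(\calP)\le 2n-1$.

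\emph{Lower bounds.} The bound $\lambda(\calP)\ge n+1$ is immediate from Proposition~\ref{Prop:lowerbound}, so $\lambda(\calP)=n+1$. The main obstacle is $\kappa(\calP)\ge 2n-1$, for which widening is useless since it only bounds $\kappa$ from above. I would argue directly that no tuple of $k=2n-2$ symmetric matrices is completely identifying. By the trichotomy of Theorem~\ref{Thm:algsignalshort}, a measurement regime fails to be completely identifying as soon as some signal is non-identifiable, so I would exhibit, for every symmetric $A_1,\dots,A_{2n-2}$, such a signal. Using symmetry, $z^\top A_i z-w^\top A_i w=(z-w)^\top A_i(z+w)$, so after the linear substitution $u=z-w$, $v=z+w$ a non-identifiable pair $z\neq\pm w$ corresponds exactly to nonzero $u,v\in\CC^n$ with $u^\top A_i v=0$ for all $i$; equivalently, to a nonzero $u$ for which $A_1u,\dots,A_ku$ fail to span $\CC^n$. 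Form the $n\times k$ matrix $\mathcal A(u)=[\,A_1u\mid\cdots\mid A_ku\,]$, which depends linearly on $u$. If the $A_i$ have a common kernel we are done at once; otherwise $u\mapsto \mathcal A(u)$ embeds $\PP^{n-1}$ as a linear subspace $L\subseteq\PP^{nk-1}$. The locus $D$ of matrices of rank $\le n-1$ is determinantal of codimension $k-n+1$, and by the projective dimension theorem $L\cap D\neq\emptyset$ whenever $\dim L+\dim D\ge nk-1$, i.e.\ whenever $k\le 2n-2$. Such an intersection point is a nonzero $u$ with $\rk\mathcal A(u)\le n-1$, producing the desired non-identifiable signal; hence $\kappa(\calP)\ge 2n-1$ and therefore $\kappa(\calP)=2n-1$.

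\emph{Orthogonal projectors.} Finally, the $\calP_i=\calP_\rho(r_i)$ are exactly the spaces of rank-at-most-$r_i$ symmetric matrices, so Lemma~\ref{Lem:orthdescent}(i),(ii) identify their generic- and complete-identifiability with those of the corresponding orthogonal projectors of rank exactly $r_i$; the thresholds $\lambda$ and $\kappa$ are therefore unchanged, which is the last assertion. I expect the determinantal count in the $\kappa$ lower bound to be the only non-routine step, since it is precisely the algebraic-geometric form of the Balan--Casazza--Edidin complement property and is the one place where the higher-rank case is not formally subsumed by the rank-one result.
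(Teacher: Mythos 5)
Your proposal is correct, and for most of the statement it coincides with the paper's own proof: the upper bounds $\lambda(\calP)\le n+1$, $\kappa(\calP)\le 2n-1$ via Theorem~\ref{Thm:knownthresreal} plus Lemma~\ref{Lem:widen}, the bound $\lambda(\calP)\ge n+1$ via Proposition~\ref{Prop:lowerbound}, and the orthogonal case via Lemma~\ref{Lem:orthdescent}. Where you genuinely diverge is the lower bound $\kappa(\calP)\ge 2n-1$: the paper disposes of it in one line (``implied by Theorem~\ref{Thm:knownthresreal} and the definition of $\kappa$''), but as you rightly note, Lemma~\ref{Lem:widen} transfers identifyingness only upward, so the rank-one value $\kappa(\calS_\rho)=2n-1$ by itself gives $\kappa(\calP)\le 2n-1$ for the larger family and nothing more; indeed the paper itself records, via Example~\ref{Ex:psidvsidlow}, that the reverse implications of Lemma~\ref{Lem:widen} fail in general, so the paper's citation-based step is at best elliptical. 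Your determinantal argument supplies the missing justification: the polarization identity $z^\top A_i z - w^\top A_i w=(z-w)^\top A_i(z+w)$ (valid because the $A_i$ are symmetric), the reduction to a nonzero $u$ with $\rk\left[\,A_1u\mid\cdots\mid A_ku\,\right]\le n-1$, and the projective dimension theorem intersecting the linear $\PP^{n-1}$ with the rank-$\le n-1$ determinantal variety of codimension $k-n+1$ are all correct over $\CC$, which is exactly the complexified setting of the theorem (the boundary cases $k\le n-1$ and a common kernel are trivially covered, as you indicate). Your route even proves something stronger --- no tuple of $2n-2$ symmetric matrices of any ranks is completely identifying, not merely no generic tuple --- and, as you observe, it is the algebraic-geometric avatar of the Balan--Casazza--Edidin complement property; the price is the extra intersection-theoretic machinery, while the paper's approach buys brevity at the cost of leaving this one step unargued.
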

\begin{proof}
$\lambda (\calP)\ge n+1$ is implied by Proposition~\ref{Prop:lowerbound}. $\kappa(\calP) \ge 2n-1$ is implied by Theorem~\ref{Thm:knownthresreal} and the definition of $\kappa$. Lower bounds $\lambda (\calP)\le n+1$ and $\kappa(\calP) \le 2n-1$ are implied by combining Theorem~\ref{Thm:knownthresreal} and Lemma~\ref{Lem:widen}. The statement for orthogonal projectors follows from Lemma~\ref{Lem:orthdescent}.
\end{proof}

Using the tools introduced in Section~\ref{sec:theorems.descent}, we obtain from this statement about the complexified problem one about the original phase retrieval problem for the reals:

\begin{Thm}\label{Thm:genlinreal}
Let $P_i\in\RR^{r_i\times n},1 \le i\le k$ be generic. Then, a generic signal $z\in\RR^n$ is identifiable from $b_i = \|P_i z \|^2, 1\le i\le k$ up to sign if and only if $k\geq n+1$. All signals $z\in\RR^n$ are identifiable from $b_i = \|P_i z \|^2, 1\le i\le k$ up to sign if and only if $k\geq 2n -1$.
The result remains unaltered if the projectors $P_i$ are restricted to be orthogonal.
\end{Thm}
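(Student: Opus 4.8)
The plan is to read the statement as the descent to $\RR$ of the complexified thresholds already obtained in Theorem~\ref{Thm:genlinthresreal}, carried out through the transfer principle of Proposition~\ref{Prop:proboneR}. First I set up the dictionary between the original real problem and the algebraized one: for $z\in\RR^n$ and $P_i\in\RR^{r_i\times n}$, writing $Z:=zz^\top$ and $A_i:=P_i^\top P_i$, one has $b_i=\|P_iz\|^2=z^\top P_i^\top P_i z=\Tr(Z\cdot A_i)$, so that reconstructing $z$ up to sign is literally the problem of identifying $Z$ from $\phi_A(Z)$ in the sense of Remark~\ref{Rem:identalg}. Under this dictionary the admissible signals are $\calS'_\RR:=\{zz^\top:z\in\RR^n\}$ and the admissible measurement tuples are the $\calP'_\RR$ consisting of $(P_1^\top P_1,\dots,P_k^\top P_k)$ with $P_i\in\RR^{r_i\times n}$.

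Next I would verify the hypotheses of Proposition~\ref{Prop:proboneR}. The ambient complex varieties are $\calS=\calS_\rho$ and $\calP_i=\calP_\rho(r_i)$, which are irreducible and observable over the reals by Proposition~\ref{Prop:irrvarieties}. The sets $\calS'_\RR$ and $\calP'_\RR$ are positive-measure subsets of the respective real parts whose Zariski closures recover those real parts; this is precisely where the concluding addendum of Proposition~\ref{Prop:proboneR} is indispensable, because $\calS_\RR$ also contains negative rank-one matrices $-ww^\top$ that are not of the form $zz^\top$ for any real $z$. With the hypotheses in place, Theorem~\ref{Thm:genlinthresreal} furnishes $\lambda(\calP)=n+1$ and $\kappa(\calP)=2n-1$, i.e. $\calP^{(k)}$ is generically identifying exactly when $k\ge n+1$ and completely identifying exactly when $k\ge 2n-1$.

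The descent is then a case analysis. For the two sufficiency directions: if $k\ge n+1$ then $\calP$ is generically identifying, so by Proposition~\ref{Prop:proboneR}(v) (with the addendum) a generic real signal is perturbation-stably, hence in particular plainly, identifiable; if $k\ge 2n-1$ then $\calP$ is completely identifying, and part~(vi) gives the same for every real signal. For the two necessity directions I argue by contraposition: if $k\le n$ then $\calP$ is not generically identifying, and if $n+1\le k\le 2n-2$ it is incompletely identifying. In both regimes the relevant property and its negation are algebraic, and observability over the reals together with Zariski-density of the real points of $\calS'_\RR$ transports the complex non-identifiable locus to a non-identifiable locus meeting $\calS'_\RR$ --- generic in the first regime, nonempty in the second --- which is exactly the failure of generic, respectively complete, real identifiability. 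The orthogonal case follows word for word from the ``remains unaltered'' clause of Theorem~\ref{Thm:genlinthresreal}, which rests on Lemma~\ref{Lem:orthdescent}.

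The step I expect to be the real obstacle is this necessity half. Proposition~\ref{Prop:proboneR} is phrased throughout in terms of perturbation-stable identifiability, whereas the theorem asserts plain identifiability; the discrepancy is harmless for sufficiency, since perturbation-stable identifiability implies identifiability, but for necessity one must exclude the possibility that all the non-identifiable complex signals are non-real, which would let the real problem look identifiable where the complex one is not. Pinning this down is exactly the role of observability over the reals, and the bookkeeping of which real locus is Zariski-dense in which complex variety is where the argument has to be made watertight.
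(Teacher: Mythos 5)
Your proposal follows essentially the same route as the paper's own proof: the dictionary $Z=zz^\top$, $A_i=P_i^\top P_i$ with genericity transported via Proposition~\ref{Prop:genprojmat} (resp.\ the positive-measure-subset addendum of Proposition~\ref{Prop:proboneR}), then the complexified thresholds of Theorem~\ref{Thm:genlinthresreal} descended to $\RR$ by Proposition~\ref{Prop:proboneR}, with the orthogonal case from Lemma~\ref{Lem:orthdescent} --- and your additional bookkeeping (irreducibility and observability via Proposition~\ref{Prop:irrvarieties}, and the observation that $\calS_\RR$ properly contains $\{zz^\top : z\in\RR^n\}$ because of the $-ww^\top$ component, which is exactly why the addendum is needed) is correct and is glossed over in the paper. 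The perturbation-stable-versus-plain identifiability discrepancy you flag in the necessity directions is a genuine subtlety, but it is equally present in, and silently elided by, the paper's own one-line proof, which rests on the same Proposition~\ref{Prop:proboneR}; so relative to the paper's argument your proof is faithful and, if anything, more careful.
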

\begin{proof}
Taking generic $P_i$ is equivalent to having generic symmetric measurements of rank $r_i$, by Proposition~\ref{Prop:genprojmat}. By the same argument as in the beginning of Lemma~\ref{Lem:orthdescent}, we can thus assume that we have generic $A_i$ of rank $r_i$. The statement is then implied by Theorem~\ref{Thm:genlinthresreal}~(i) and Proposition~\ref{Prop:proboneR}, noting that identifiability of $Z=zz^\top\in\RR^{n\times n}$ is equivalent to identifiability of $z$ up to sign.
\end{proof}
This solves the open problems (1-6).

\subsection{Identifiability of Complex Signals}

The case of complex phase recognition is somewhat analogous to the real one, while more technical due to the special structure of the matrices involved.

\begin{Prop}\label{Prop:lowerboundcomplex}
Consider identifiability from complex signals, corresponding to the complex signal variety $\calS_\iota=\{(xx^\top + yy^\top, yx^\top - xy^\top)\;:\;x,y\in\CC^n\}.$ For any family of irreducible varieties $\calP_i\subseteq \CC^{n\times n}\times \CC^{n\times n}, i\in \NN,$ with $n\ge 2$, it holds that $\kappa(\calP)\ge \lambda(\calP)$, and $\lambda(\calP)\ge 2n$.
\end{Prop}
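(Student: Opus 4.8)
The inequality $\kappa(\calP)\ge\lambda(\calP)$ is immediate: a completely identifying tuple is in particular generically identifying, so the threshold for the stronger property cannot be smaller. For the bound $\lambda(\calP)\ge 2n$ the plan is to imitate the real case, Proposition~\ref{Prop:lowerbound}. Since every $\calP_i$ sits inside $\CC^{n\times n}\times\CC^{n\times n}$, Lemma~\ref{Lem:widen}~(ii) reduces the statement to the single largest family: it suffices to show that $(\CC^{n\times n}\times\CC^{n\times n})^{2n-1}$ is \emph{not} generically identifying, for if some sub-family were generically identifying with $2n-1$ measurements, so would this one be. Having widened, I would invoke Proposition~\ref{Prop:algcritmatrices} to pass to a generic measurement tuple, so that it is enough to show a generic such tuple does not identify a generic signal.

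The next step is to expose the rank structure hidden in the complex formulation. Writing $M:=R+\iota\Phi$ and setting $u:=x-\iota y$, $v:=x+\iota y$, a direct computation gives $M=vu^\top$; hence the signal variety $\calS_\iota$ is precisely the variety $\calR_{\le 1}$ of rank-$\le 1$ matrices in $\CC^{n\times n}$, which has dimension $2n-1$. Using that $R$ is symmetric and $\Phi$ antisymmetric, each measurement rewrites as $\Tr(R B_i+\Phi C_i)=\Tr(M D_i)$ with $D_i=\tfrac12(B_i+B_i^\top)-\iota\,\tfrac12(C_i-C_i^\top)$. As $(B_i,C_i)$ runs through a generic tuple, $D_i$ runs through a generic tuple in $\CC^{n\times n}$ (a generic symmetric part plus $\iota$ times a generic antisymmetric part fills out all of $\CC^{n\times n}$). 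Thus, after this identification, the measurement map $\phi_A$ is the restriction to $\calR_{\le 1}$ of a generic linear projection $\CC^{n\times n}\to\CC^{2n-1}$.

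It then remains to show that such a projection, onto a target of dimension $2n-1=\dim\calR_{\le 1}$, is not generically injective. Here I would argue that the generic fibre has cardinality equal to the degree of the Segre variety $\PP^{n-1}\times\PP^{n-1}\subseteq\PP^{n^2-1}$, namely $\binom{2n-2}{n-1}$, which is at least $2$ for every $n\ge 2$: because the projection is linear and $\calR_{\le 1}$ is a cone, it descends to the generic projection of the Segre variety to $\PP^{2n-2}$, whose generic fibre consists of $\binom{2n-2}{n-1}$ points, and each such point lifts uniquely to the affine fibre once the measurement value is fixed. Consequently a generic signal $M=vu^\top$ shares its measurements with a second rank-$1$ matrix $M'\ne M$, so $\phi_A$ fails to be generically injective and $(\CC^{n\times n}\times\CC^{n\times n})^{2n-1}$ is not generically identifying, giving $\lambda(\calP)\ge 2n$. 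This is the exact analogue of the B\'ezout count in Proposition~\ref{Prop:lowerbound}, with the Segre variety here playing the role of the Veronese variety $\{zz^\top\}$ of degree $2^{n-1}$ there.

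The main obstacle is this last step, turning a single collision into a statement about a \emph{generic} signal. Producing one collision is cheap: a generic kernel $\ker\phi_A$ of dimension $(n-1)^2$ meets the secant variety of $\calR_{\le 1}$, the rank-$\le 2$ cone of dimension $4n-4$, in dimension $(n-1)^2+(4n-4)-n^2=2n-3\ge 1$, yielding nonzero differences $M_1-M_2$ of rank-$1$ matrices in $\ker\phi_A$. To upgrade this to genericity I must analyse the incidence variety $\{(M_1,M_2)\in\calR_{\le 1}\times\calR_{\le 1}:M_1-M_2\in\ker\phi_A\}$, check that its off-diagonal part again has dimension $2n-1$, and verify that the projection onto the first factor is dominant; equivalently, I must ensure the degree count above genuinely refers to the generic fibre. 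This is exactly where careful affine-versus-projective bookkeeping and the dominance of the projection onto the signal factor enter.
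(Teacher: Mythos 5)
Your proof is correct and takes essentially the same route as the paper: the paper's own argument is exactly the dimension count $\dim\calS_\iota=2n-1$ together with a B\'ezout/degree observation that the signal variety has degree strictly greater than one, so that at $k=2n-1$ a generic fiber cannot be a singleton; your identification $R+\iota\Phi=vu^\top$ with the rank-$\le 1$ (Segre) cone and the explicit degree $\binom{2n-2}{n-1}$ simply make the paper's qualitative ``degree $>1$'' quantitative, and your use of Lemma~\ref{Lem:widen} to reduce to the full measurement space mirrors the widening step in the real-case Proposition~\ref{Prop:lowerbound}. The verification you defer in your final paragraph is not a genuine gap: in characteristic zero, a linear projection of an irreducible projective variety $X\subseteq\PP^N$ of dimension $d$ onto $\PP^d$ from a generic center disjoint from $X$ is a finite morphism whose fibre over a generic point consists of exactly $\deg X$ distinct points, which is precisely the standard fact your proposed incidence-variety analysis would reprove.
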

\begin{proof}
Let $\phi$ be the forward map in Problem~\ref{Prob:inversecompalg}. It holds that $\dim \calS_\iota=2n-1$, therefore the fiber $\phi^{-1}(\phi(A_1,\dots, A_k,Z))$ can be finite only if $k\ge 2n-1$. Since $\calS_\iota$ is non-linear, it has degree strictly bigger than one, implying by Bezout's theorem that $\phi^{-1}(\phi(A_1,\dots, A_k,Z))$ is not finite for $k=2n-1$. Therefore, $\lambda(\calP)\ge 2n$.
\end{proof}

We now summarize some results which can be readily inferred from literature:

\begin{Thm}\label{Thm:knownthrescomplex}
Consider identifiability from complex signals $\calS=\{(xx^\top + yy^\top, yx^\top - xy^\top)\;:\;x,y\in\CC^n\},$
and the family $\calP=\calS$ Then:
$$\lambda (\calP)=2n,\quad\mbox{and}\quad \kappa(\calP) \le 4n-4.$$
\end{Thm}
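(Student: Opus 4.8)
The statement has two parts: the generic-identifiability threshold $\lambda(\calP)=2n$, and the complete-identifiability bound $\kappa(\calP)\le 4n-4$, in the setting where $\calS=\calP=\calS_\iota$ is the complex signal variety. The lower bound $\lambda(\calP)\ge 2n$ is already furnished by Proposition~\ref{Prop:lowerboundcomplex}, so for the first equality it remains only to show $\lambda(\calP)\le 2n$, i.e.\ that $2n$ generic measurements are generically identifying. The plan is to invoke the local-to-global machinery: by Corollary~\ref{Cor:genericmeas}~(ia), it suffices to exhibit a \emph{single} stably generically identifying measurement regime $A=(A_1,\dots,A_{2n})\in\calP_\CC(r)^{2n}$, since then a generic regime is stably generically identifying. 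Equivalently, by Proposition~\ref{Prop:algcritmatrices}, I only need one pair $(A,Z)$ at which $\phi$ is unramified and $\card{\phi^{-1}\phi(A,Z)}=1$. The cleanest route is to cite the literature: the results of~\cite{Balan:2006fk} (or the $4n-4$ constructions of~\cite{Bodmann:2013fk,Balan:2013fk}) already provide explicit complex measurement schemes of size $2n$ that identify a generic signal, and by Lemma~\ref{Lem:orthdescent} and Lemma~\ref{Lem:widen} the property transfers between rank-one, higher-rank, and orthogonal/unitary projector families. Descending from the algebraized complex variety to the genuine complex phase-retrieval problem is handled uniformly by Proposition~\ref{Prop:proboneR}.

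For the complete-identifiability bound $\kappa(\calP)\le 4n-4$, the strategy is again to produce one stably completely identifying regime and then apply Corollary~\ref{Cor:genericmeas}~(ib) to conclude that generic regimes of size $4n-4$ are stably completely identifying. Here I would lean directly on the known constructions: \cite{Bodmann:2013fk} exhibits explicit frames of $4n-4$ rank-one measurements that allow phase retrieval for \emph{all} complex signals. The task is then to certify that such a construction is not merely completely identifying but \emph{stably} so, which by the definition preceding Corollary~\ref{Cor:genericmeas} and by Proposition~\ref{Prop:opencond}~(ii) reduces to checking that $\phi$ is unramified over $(A,Z)$ for every $Z\in\calS$. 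Once a single stably completely identifying $A$ exists in $\calP_\CC(r)^{4n-4}$, openness of the complete-identifiability condition immediately yields $\kappa(\calP)\le 4n-4$.

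The main obstacle is the stability upgrade: the cited references establish identifiability as a set-theoretic bijection, whereas our framework requires the stronger certificate that $\phi$ is generically unramified and unramified over the relevant fibers. I would address this by first verifying generic unramifiedness of $\phi$ on $\calP_\CC(r)^k$ (this is the standing hypothesis of Theorem~\ref{Thm:algmeasureshort}, proved in the appendix as Proposition~\ref{Prop:unrphi}), and then arguing that an identifiable point at which $\phi$ is generically unramified is automatically unramified on a dense open set — so that a generic representative of any completely identifying regime is stably completely identifying. A delicate point is that complete identifiability is an \emph{all-signals} statement, so I cannot simply perturb to a generic signal; instead I must ensure the chosen regime avoids the ramification locus over \emph{every} $Z\in\calS$ simultaneously, which is where the explicit structure of the $4n-4$ constructions, or a dimension count on the incidence variety $\calY^C$ of Proposition~\ref{Prop:opencond}, does the real work.
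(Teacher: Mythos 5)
Your proposal follows essentially the same route as the paper's proof: the lower bound $\lambda(\calP)\ge 2n$ comes from Proposition~\ref{Prop:lowerboundcomplex}, and the upper bounds $\lambda(\calP)\le 2n$ and $\kappa(\calP)\le 4n-4$ are obtained by importing the known witnesses from \cite{Balan:2006fk} and \cite{Balan:2013fk,Bodmann:2013fk} and promoting them to generic statements over the complex varieties via the openness/local-to-global results (Proposition~\ref{Prop:opencond}, equivalently Corollary~\ref{Cor:genericmeas}), exactly as the paper does. The only differences are cosmetic: the extra machinery you invoke (Lemmas~\ref{Lem:widen} and~\ref{Lem:orthdescent}, Proposition~\ref{Prop:proboneR}) belongs to the later Theorems~\ref{Thm:genlinthrescomplex} and~\ref{Thm:genlincomp} rather than to this statement, and the stability/unramifiedness upgrade you carefully flag is absorbed by the paper, without comment, into Proposition~\ref{Prop:opencond}.
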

\begin{proof}
Note that once identifiability for signals $\calS'=\{(xx^\top + yy^\top, yx^\top - xy^\top)\;:\;x,y\in\RR^n\},$ and projectors $\calP'=\calS'$ is established, we can use Proposition~\ref{Prop:opencond} to obtain the statement for the Zariski closure $\calS$ of $\calS'$ and $\calP$ of $\calP'$.
Thus, $\lambda(\calP)\le 2n$ can be inferred from~\cite[Theorems~3.4]{Balan:2006fk}; the inequality $\kappa(\calP) \le 4n-4$ can be obtained from~\cite[section~4]{Balan:2013fk}. Combined with Proposition~\ref{Prop:lowerboundcomplex}, this yields the statment.
\end{proof}

Again, Lemmata~\ref{Lem:widen} and~\ref{Lem:orthdescent} yield a statement for general projectors:

\begin{Thm}\label{Thm:genlinthrescomplex}
Consider identifiability from complex signals $\calS=\{(xx^\top + yy^\top, yx^\top - xy^\top)\;:\;x,y\in\CC^n\},$
and the family $\calP_i:=\{(Q^\top Q + S^\top S,Q^\top S - S^\top Q) \;:\;S,Q\in\CC^{r_i\times n}\}.$  Then:
$$\lambda (\calP)=2n,\quad\mbox{and}\quad \kappa(\calP) \le 4n-4.$$
The result remains unaltered if the projectors $\calP$ are restricted to be unitary.
\end{Thm}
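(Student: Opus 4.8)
The plan is to mirror the proof of Theorem~\ref{Thm:genlinthresreal} for the real case, assembling the statement from four ingredients already available: the rank-one base case (Theorem~\ref{Thm:knownthrescomplex}), the widening Lemma~\ref{Lem:widen}, the unitary-descent Lemma~\ref{Lem:orthdescent}, and the lower bound of Proposition~\ref{Prop:lowerboundcomplex}. Throughout one relies on the fact that the spaces $\calP_i=\calP_\CC(r_i)$ are irreducible and that the extended forward map $\phi$ is generically unramified, as guaranteed by Proposition~\ref{Prop:irrvarieties}; this is exactly what makes the $\lambda,\kappa$ terminology applicable in the first place.

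For the lower bound, Proposition~\ref{Prop:lowerboundcomplex} applies verbatim to the family $\calP_i=\calP_\CC(r_i)\subseteq\CC^{n\times n}\times\CC^{n\times n}$, giving $\lambda(\calP)\ge 2n$ (and incidentally $\kappa(\calP)\ge\lambda(\calP)$, although only $\kappa(\calP)\le 4n-4$ is asserted). The upper bounds rest on the single observation that the rank-one space sits inside every $\calP_\CC(r_i)$, i.e.\ $\calS_\CC=\calP_\CC(1)\subseteq\calP_\CC(r_i)$ for all $r_i\ge 1$: given $x,y\in\CC^n$ one realizes $(xx^\top+yy^\top,\,yx^\top-xy^\top)$ as $(Q^\top Q+S^\top S,\,Q^\top S-S^\top Q)$ by taking $Q,S\in\CC^{r_i\times n}$ whose first rows are $y^\top$ and $x^\top$ and whose remaining rows vanish, since then $Q^\top Q=yy^\top$, $S^\top S=xx^\top$, $Q^\top S=yx^\top$, and $S^\top Q=xy^\top$. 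With this inclusion, Lemma~\ref{Lem:widen}(ii) transfers generic identifyingness from $\calP_\CC(1)^{(2n)}$ — valid by $\lambda(\calS_\CC)=2n$ in Theorem~\ref{Thm:knownthrescomplex} — to $\calP^{(2n)}=\prod_i\calP_\CC(r_i)$, yielding $\lambda(\calP)\le 2n$ and hence, with the lower bound, $\lambda(\calP)=2n$. Likewise Lemma~\ref{Lem:widen}(iii) transfers complete identifyingness from $\calP_\CC(1)^{(4n-4)}$ (valid since $\kappa(\calS_\CC)\le 4n-4$) to $\calP^{(4n-4)}$, giving $\kappa(\calP)\le 4n-4$.

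Finally, the statement for unitary projectors follows directly from Lemma~\ref{Lem:orthdescent}, whose hypotheses are met because each $\calP_i=\calP_\CC(r_i)$ is precisely a space of matrices of rank at most $r_i$: parts (i) and (ii) assert that for every fixed $k$ the space $\calP^{(k)}$ is generically (respectively completely) identifying if and only if the corresponding variety of unitary rank-$r_i$ projectors is, so both thresholds $\lambda$ and $\kappa$ are left unchanged. I do not expect a genuine obstacle, since every component is already in hand; the only steps needing care are verifying the padding inclusion $\calP_\CC(1)\subseteq\calP_\CC(r_i)$ that drives Lemma~\ref{Lem:widen}, and confirming that generic unramifiedness — required for the identifyingness vocabulary and propagated to the larger space by Lemma~\ref{Lem:widen}(i) — indeed holds for the rank-one base case before it is widened.
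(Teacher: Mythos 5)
Your proposal is correct and follows essentially the same route as the paper's own proof: the lower bound $\lambda(\calP)\ge 2n$ from Proposition~\ref{Prop:lowerboundcomplex}, the upper bounds by combining Theorem~\ref{Thm:knownthrescomplex} with Lemma~\ref{Lem:widen} via the inclusion $\calS_\CC=\calP_\CC(1)\subseteq\calP_\CC(r_i)$, and the unitary case from Lemma~\ref{Lem:orthdescent}. Your explicit padding construction verifying $\calP_\CC(1)\subseteq\calP_\CC(r_i)$ and your check of generic unramifiedness are details the paper leaves implicit, and they are both correct.
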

\begin{proof}
$\lambda (\calP)\ge 2n$ is implied by Proposition~\ref{Prop:lowerboundcomplex}. Lower bounds $\lambda (\calP)\le 2n$ and $\kappa(\calP) \le 4n-4$ are implied by combining Theorem~\ref{Thm:knownthrescomplex} and Lemma~\ref{Lem:widen}. The statement for unitary projectors follows from Lemma~\ref{Lem:orthdescent}.
\end{proof}

Now we can combine the algebraic results over the complex numbers in Theorem~\ref{Thm:genlinthrescomplex}, with the connection to the reals given in Proposition~\ref{Prop:proboneR} to deduce the identifiability theorems for the original (non-algebraized) complex phase retrieval problem:

\begin{Thm}\label{Thm:genlincomp}
Let $P_i\in\CC^{r_i\times n},1 \le i\le k$ be generic. Then, a generic signal $z\in\CC^n$ is identifiable from $b_i = \|P_i z \|^2, 1\le i\le k$ up to phase if and only if $k\geq 2n$. All signals $z\in\CC^n$ are identifiable from $b_i = \|P_i z \|^2, 1\le i\le k$ up to phase if $k\geq 4n-4$. The result remains unaltered if the projectors $P_i$ are restricted to be unitary.
\end{Thm}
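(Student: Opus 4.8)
The plan is to mirror the proof of Theorem~\ref{Thm:genlinreal} for real signals: descend the algebraic thresholds over $\CC$ furnished by Theorem~\ref{Thm:genlinthrescomplex} to the original complex phase retrieval problem by means of the transfer result Proposition~\ref{Prop:proboneR}. The only novelty relative to the real case is the passage through the real/imaginary decomposition that algebraizes the complex conjugation in $\|P_i z\|^2$.

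First I would algebraize the measurement. Writing $P_i = Q_i + \iota S_i$ with $Q_i,S_i\in\RR^{r_i\times n}$ and $z = x + \iota y$ with $x,y\in\RR^n$, the elementary computation recorded after Problem~\ref{Prob:inversecompalgreal} gives $\|P_i z\|^2 = \Tr(R\cdot B_i + \Phi\cdot C_i) = \Tr(Z\cdot A_i)$, where $Z = (R,\Phi) = (xx^\top + yy^\top,\, yx^\top - xy^\top)$ and $A_i = (B_i,C_i) = (Q_i^\top Q_i + S_i^\top S_i,\, Q_i^\top S_i - S_i^\top Q_i)$, using the convention $\Tr((X,Y)\cdot(B,C)) = \Tr(XB+YC)$. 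By construction $Z\in\calS_\CC$ and $A_i\in\calP_\CC(r_i)$, and both in fact lie in the real loci $\calS_\RR$ and $\calP_\RR$. Since $zz^* = R + \iota\Phi$, identifiability of $z$ up to a global phase is precisely identifiability of the signal $Z = (R,\Phi)$, so the theorem is exactly a statement about identifying points of $\calS_\RR$ from the pairs $A_i\in\calP_\RR$.

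Next I would check that genericity survives this translation: a generic $P_i\in\CC^{r_i\times n}$ amounts to a generic pair $(Q_i,S_i)$, hence to a generic point $A_i$ of the real locus of the irreducible variety $\calP_\CC(r_i)$ --- this is the complex counterpart of Proposition~\ref{Prop:genprojmat}, combined with the symmetrization step at the start of Lemma~\ref{Lem:orthdescent}. Theorem~\ref{Thm:genlinthrescomplex} then supplies the thresholds $\lambda(\calP) = 2n$ and $\kappa(\calP)\le 4n-4$ for $\calS = \calS_\CC$ and $\calP_i = \calP_\CC(r_i)$. Feeding these into Proposition~\ref{Prop:proboneR}, whose observability-over-the-reals hypothesis holds for these prototypical varieties by Proposition~\ref{Prop:irrvarieties}, descends them to the reals: by parts (iv)--(v), a generic real signal is (perturbation-stably) identifiable from generic real measurements if and only if $k\ge\lambda(\calP) = 2n$, which is the first assertion; by part (vi), once $k\ge\kappa(\calP)$ all real signals are identifiable, and since $\kappa(\calP)\le 4n-4$ this holds whenever $k\ge 4n-4$, which is the second. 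The restriction to unitary $P_i$ needs no extra work, as it is already absorbed into Theorem~\ref{Thm:genlinthrescomplex} via Lemma~\ref{Lem:orthdescent}.

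The step I expect to be the only genuinely delicate one is the genericity bookkeeping of the previous paragraph: one must be sure that a generic complex $P_i$ lands on a \emph{generic} point of the real locus of $\calP_\CC(r_i)$ rather than in some proper subvariety, and that the observability hypothesis of Proposition~\ref{Prop:proboneR} is genuinely met for the paired ($\gamma = 2$) matrix structure. Both are handled by Propositions~\ref{Prop:genprojmat} and~\ref{Prop:irrvarieties}, but they are where the complex-to-real dictionary must be applied with care. Everything else is a direct assembly of the already-proved threshold and transfer results, exactly parallel to the real case, so I would keep the written proof as short as that of Theorem~\ref{Thm:genlinreal}.
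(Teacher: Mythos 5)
Your proposal is correct and follows essentially the same route as the paper's own proof: reduce generic $P_i$ to generic measurement pairs $A_i=(B_i,C_i)$ via Proposition~\ref{Prop:genprojmat} and the symmetrization step of Lemma~\ref{Lem:orthdescent}, then descend the thresholds of Theorem~\ref{Thm:genlinthrescomplex} to the reals via Proposition~\ref{Prop:proboneR}, noting that identifiability of $(R,\Phi)$ is identifiability of $zz^*=R+\iota\Phi$ up to phase. Your extra care about observability over the reals (Proposition~\ref{Prop:irrvarieties}) and the genericity bookkeeping only makes explicit what the paper's terser proof leaves implicit.
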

\begin{proof}
Taking generic $P_i$ is equivalent to having generic symmetric measurements of rank $r_i$, by Proposition~\ref{Prop:genprojmat}. By the same argument as in the beginning of Lemma~\ref{Lem:orthdescent}, we can thus assume that we have generic $A_i$ of rank $r_i$. The statment is then implied by Theorem~\ref{Thm:genlinthrescomplex}~(i) and Proposition~\ref{Prop:proboneR}, noting that identifiability of $(X,Y)$ is equivalent to identifiability of $X+\iota Y=zz^*\in\CC^{n\times n}$, up to phase.
\end{proof}
This solves problem (7), and problems (1-6) for unitary projection matrices.

We would like to remark that Theorems~\ref{Thm:genlinthresreal} and~\ref{Thm:genlinthrescomplex}, together with Proposition~\ref{Prop:proboneR}, not only yield the statements for general linear projections, but in fact state that any bound which can be proved for the rank one projectors directly extends to any (irreducible) set of linear projectors containing those. In particular, if one succeeds in proving a lower bound for complete identifiability for complex phase retrieval, the same bound automatically holds for linear projectors of arbitrary rank.

\subsection{Generic Measurements are Crucial for the Identifiability Thresholds}\label{sec:theorems.ramex}

The identifiability thresholds are defined by means of generic measurements. This section is dedicated to verify there are specific measurements such that signal recovery is possible below the identifiability threshold. In fact, we shall provide an example of $n$ measurements in the real and $2n-1$ measurements in the complex case, so that generic signals are uniquely determined up to their sign and phase, respectively.
\begin{Ex}\label{Ex:ramex}
Consider the standard orthonormal basis vectors $\{e_i\}_{i=1}^n$ of $\RR^{n}$, and define $A_j:=e_1e_1^\top+e_je_1^\top+e_1e_j^\top$, $j=1,\ldots,n$. We observe that, for all $Z=zz^\top$, $z\in\RR^n$,
$$b_1=|z_1|^2,\quad\mbox{and}\quad b_j=\Tr(Z\cdot A_j)=|z_1|^2+2z_1z_j,\quad j=2,\ldots,n.$$
For a generic signal $z$, we can always assume $z_1\neq 0$. Since we must recover $z$ only up to its sign, we may as well assume that the measurement $b_1$ determines $z_1$ and aim to reconstruct $z_2,\dots,z_n$ exactly. A mere reformulation of the above equation then yields
$z_j = \frac{b_j-z_1^2}{2z_1},$
for $j=2,\ldots,n$. Thus, the $n$ measurements $\{A_j\}_{j=1}^n$ uniquely determine all signals $z\in\RR^n$ with $z_1\neq 0$ up to a global sign, hence generic signals.
\end{Ex}
A similar example can be derived for complex signals and $2n-1$ measurements:
\begin{Ex}\label{EX:2b}
We choose measurement matrices $A_1:= e_1e_1^\top$ and $A_j:=e_1e_1^\top+e_je_1^\top+e_1e_j^\top$ as well as $\tilde{A}_j:=e_1e_1^\top+\iota e_je_1^\top-\iota e_1e_j^\top$, $j=2,\ldots,n$. This choice yields the measurements
 \begin{equation*}
 b_1=|z_1|^2,\quad\mbox{and}\quad b_j=|z_1|^2+2\Re(\overline{z}_1 z_j),\quad c_j=|z_1|^2+2\Im(\overline{z}_1 z_j),\quad j=2,\ldots,n.
 \end{equation*}
Without loss of generality, we can choose $z_1$ real-valued and genericity allows to assume $z_1\neq 0$, so that we obtain
 \begin{equation*}
\Re(z_j) =  \frac{b_j-z_1^2}{2z_1},\quad \Im(z_j) =  \frac{c_j-z_1^2}{2z_1},\quad j=2,\ldots,n.
 \end{equation*}
Thus, the $2n-1$ measurements determine a generic $z$ up to a global phase factor.
 \end{Ex}

The matrices $A_2,\dots, A_n$ in Example~\ref{Ex:ramex} are all rank two, only $A_1$ is of rank one, similarly in Example \ref{EX:2b};  we would like to note that such an example does not exist for pure rank one measurements, that is, $A_i\in\calP_\rho(1)$ or $A_i\in\calP_\CC(1)$. Namely, in the case of real signals, a rank one measurement regime of size $n$ has either linearly dependent row-spans; otherwise, it is equivalent, by applying a linear transformation to $z$, to the measurements $A_j=e_je_j^\top$, which generically has $2^n$ distinct solutions. Thus, it is generically non-identifying in both cases. The argument for complex signals is similar. Therefore, it is highly crucial for the perturbation results in~\cite{Balan:2013fk} that the measurement scheme is rank one.

\section{A Deterministic Inversion Formula}\label{sec:algorithm}

\subsection{Phase Retrieval as Ideal Regression}
\label{sec:inversion.idreg}

In this section, we will show that the phase retrieval problem is a special case of an algebraic estimation problem, called ideal regression. This means that not only is the solvability and identifiability of the problem determined by algebraic invariants, such as $n,k,$ or the kind of projectors, but that it is - in principle - also accessible to algorithmical estimation tools from approximate algebra, such as those presented in \cite{Kiraly:2012fk}, yielding explicit and deterministic inversion formulae not only for $k=\Omega (n^2)$, but directly at the identifiability threshold $k\ge n+1$.

The reformulation of the phase retrieval as an algebraic estimation problem bears similarities to the algebraization in Section~\ref{sec:theorems.algpr}. The major idea consist of converting the observation into polynomials, which are then manipulated to obtain the solution.

Assume we are in the case of the real phase recognition problem, wanting to identify a signal $z\in\RR^n$. Then, let $X = (X_1,\dots, X_n)$ be a vector of formal variables. The $k$ projection matrices $P_i$ give rise to $k$ polynomials $$p_i(X_1,\dots, X_n) = X^\top A_i X - b_i$$
in the variables $X_j$, with $A_i= P_i^\top P_i$, such that, after substitution, we have $p_i(z)=0$. By definition the polynomials $p_i$ are contained in the ideal $\mathcal{I}:=\Id(z)\subseteq \CC [X_1,\dots, X_n]$. Thus, the estimation problem becomes, for the real phase recognition problem:\\

\begin{Prob}\label{Prob:idregreal}
Let $z\in\RR^n$ be unknown, let $\fraks=\left\langle X_1-z_1,\dots, X_n-z_n\right\rangle \in \CC [X_1,\dots, X_n]$. Let $p_1,\dots, p_k\in \mathcal{I}$ be known polynomials, of the form $p_i(X_1,\dots, X_n) = X^\top A_i X - b_i$, where $b_i=z^\top A_i z - b_i$.\\
Then, reconstruct $\fraks$, or equivalently, $z$, from the polynomials $p_1,\dots, p_k, 1\le i\le k$.\\
\end{Prob}

What at first seems like a mere reformulation, contains the gist of the algebraic ideal regression method: instead of fitting a loss function or performing optimization on $z$, or taking the $b_i,P_i$ as an input, we try to obtain the solution from manipulating the polynomials $p_i$ as symbolic objects in their own right. Again, we note that we are working over the complex numbers in the polynomial ring $\CC [X_1,\dots, X_n]$, similarly to the algebraization; we will again show that this is no major problem, from an algorithmic aspect.\\

The complex case is slightly different but can be treated similarly. Here, let $X = (X_1,\dots, X_n)$ and $Y= (Y_1,\dots, Y_n)$ be vectors of formal variables, and let $P_i = Q_i + \iota\cdot S_i$ with $Q_i,S_i\in\mathbb{R}^{m\times n}$. The projections give rise to $k$ polynomials
\begin{equation*}
p_i =  (X,Y)^\top\!\! \begin{pmatrix}
Q_i^\top Q_i + S_i^\top S_i & S_i^\top Q_i - Q_i^\top S_i\\
Q_i^\top S_i - S_i^\top Q_i & Q_i^\top Q_i + S_i^\top S_i
\end{pmatrix} \!\!\begin{pmatrix}X\\ Y
\end{pmatrix},
\end{equation*}
and those are, similar to the real case, contained in the ideal\\
$\mathcal{I}:=\fraks ((X,Y)-\tilde{z})\subseteq \CC [X_1,\dots, X_n,Y_1,\dots, Y_n]$, where $\tilde{z}= (\Re z, \Im z)\in \mathbb{R}^{2n}$. So the estimation problem is, in the complex case:\\

\begin{Prob}\label{Prob:idregcomplex}
Let $\tilde{z}\in\RR^{2n}$ be an unknown point, let \\
$\fraks=\left\langle X_1-\Re z_1, Y_1-\Im z_1,\dots, X_n-\Re z_n, Y_n-\Im z_n\right\rangle\subseteq \CC [X_1,\dots, X_n,Y_1,\dots, Y_n]$.\\
Let $p_1,\dots, p_k\in \fraks$ be known polynomials, of the form as above. Reconstruct $\fraks$, or equivalently $\tilde{z},$ from the $p_1,\dots, p_k, 1\le i\le k$.
\end{Prob}

Note that the ideal regression formulation of phase retrieval Problem~\ref{Prob:idregcomplex} differs fundamentally from the algebraized inverse problem version given in Problem~\ref{Prob:inversecompalg}, since in ideal regression, we split real and complex parts of the formal variables, whereas in the algebraization, we split real and complex parts of the matrices involved. Still, both problems are intrinsically related, and can be considered, in a certain sense, as each other's duals.

\subsection{An Inversion Formula with Ideal Regression}
\label{sec:inversion.idreg}

We describe how the ideal regression formulation of the phase retrieval problem~\ref{Prob:idregreal} can be solved by an approximate algebraic algorithm; we focus on the real case.
%
%
If $k\ge {n+1 \choose 2}$, there exist explicit inversion formulae in which one computes an approximation for $\Tr (A_i zz^\top) = b_i$, which is now considered as a linear system of $k$ equations in the ${n+1\choose 2}$ unknowns $zz^\top$; this can be written as pseudo-inverting a matrix which has one row per $A_i$, and noise stability can be achieved by regularization, or by performing a singular value decomposition. On the other hand if $k\lneq {n+1\choose 2}$, such a direct approach will not work.

However, it is nevertheless possible to construct an explicit deterministic inversion formula, readily providing answers at the identifiability threshold $k\ge n+1$, and which is numerically stable. The main idea is to use an ideal regression algorithm, namely Algorithms~1 and~2 in~\cite{Kiraly:2012fk}; the ideal $\fraks$ we wish to estimate in our case is linear, namely $\fraks = \langle X_1 - z_1, \dots, X_n-z_n\rangle$, and the input polynomials are of degree two, contained in $\fraks$. Since $\fraks$ is inhomogenous, Algorithm 1~in~\cite{Kiraly:2012fk} will output the homogenous part of $\fraks$, namely $\fraks_h=\fraks\cap \langle X_1,\dots, X_n\rangle$ which is also linear, and can be used to estimate $z$.

Instantiating Algorithm~1 in~\cite{Kiraly:2012fk} with $D=n, d=1$, and polynomials $f_i:= p_i/b_i - \overline{p}, 1\le i\le k-1$, where $\overline{p} = \sum_{i=1}^k p_i/b_i$, yields an estimate for generators $\ell_1,\ell_{n-1}$ of $\fraks_h$. The signal $z$ fulfills $\ell_i(z)=0$, therefore $z$ is orthogonal to the coefficient vectors of the $\ell_i$ and can be determined up to a scalar multiple $z'=\alpha z$ from the $\ell_i$. Thus, $z$ can be determined by setting $z:=z'/\alpha$ where $\alpha$ can be estimated as $\alpha:=\exp \left( \sum_{i=1}^k \log\left((z')^\top A_i z\right) - \log b_i\right)$. We will refer to this strategy as the ``explicit inversion'' in the experiments section.

We refrain from actually explaining in detail how Algorithm~1 in~\cite{Kiraly:2012fk} works, or from stating the algorithm itself, due to the amount of notational overhead which would be needed, and refer the reader to the original paper instead. We want to stress that Algorithm~1 is deterministic and numerically stable, therefore it yields a potentially explicit and regularizable inversion formula for the phase recognition problem.

\section{Experiments}\label{sec:experiments}
\begin{figure}
\centering
\subfigure[$n=6$]{
\includegraphics[width=.3\textwidth]{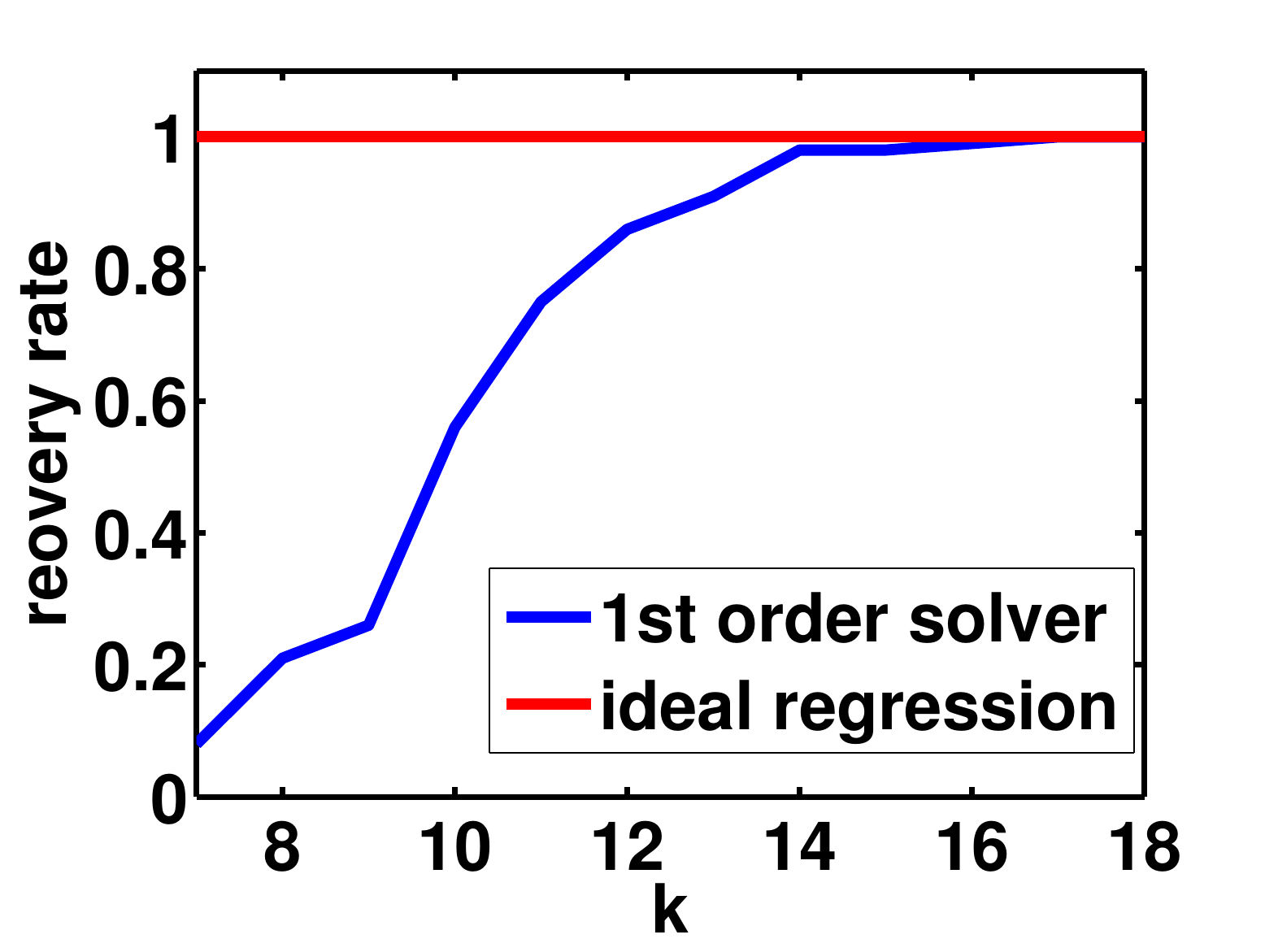}
}
\subfigure[$n=8$]{
\includegraphics[width=.3\textwidth]{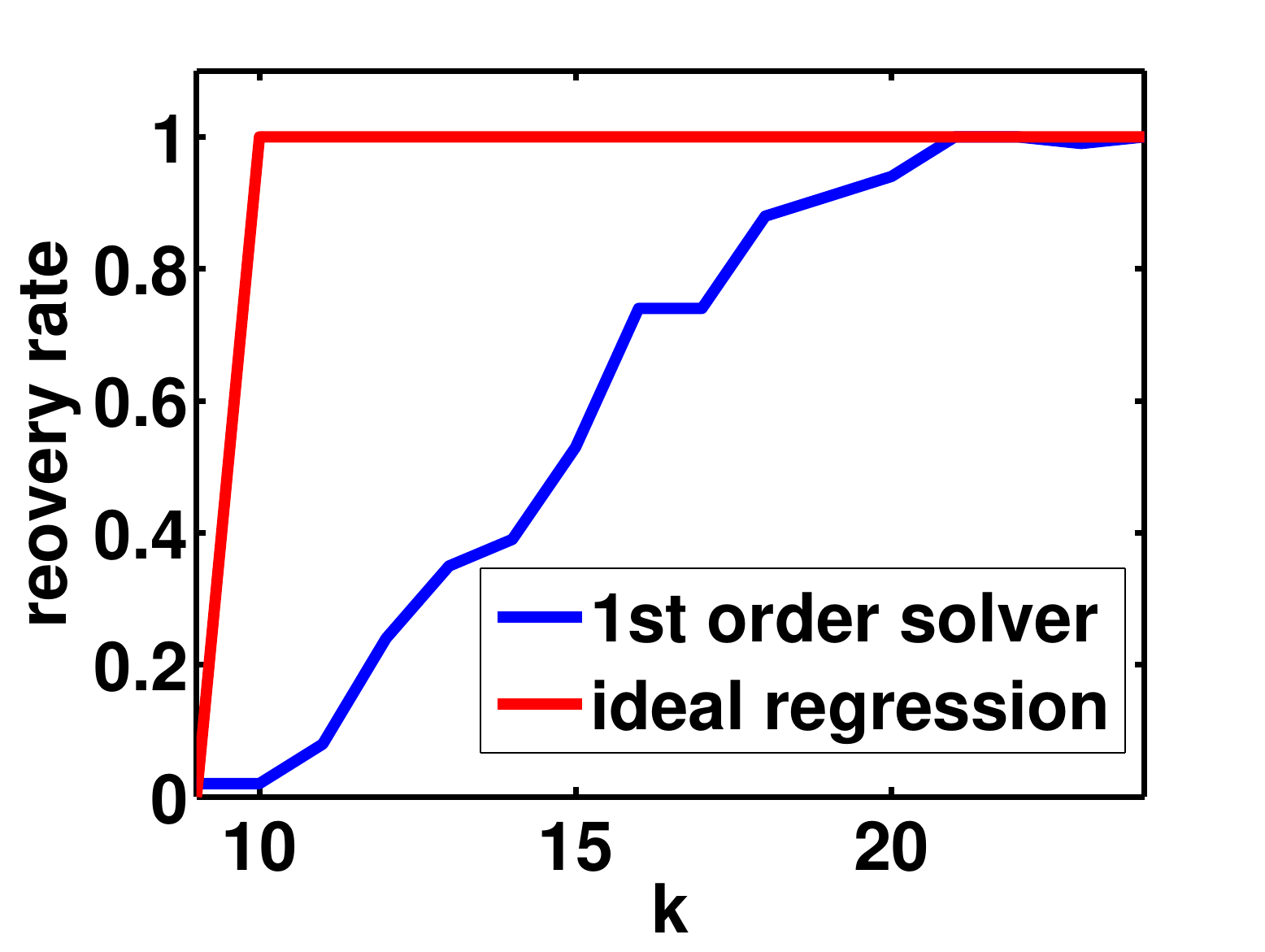}
}
\subfigure[$n=10$]{
\includegraphics[width=.3\textwidth]{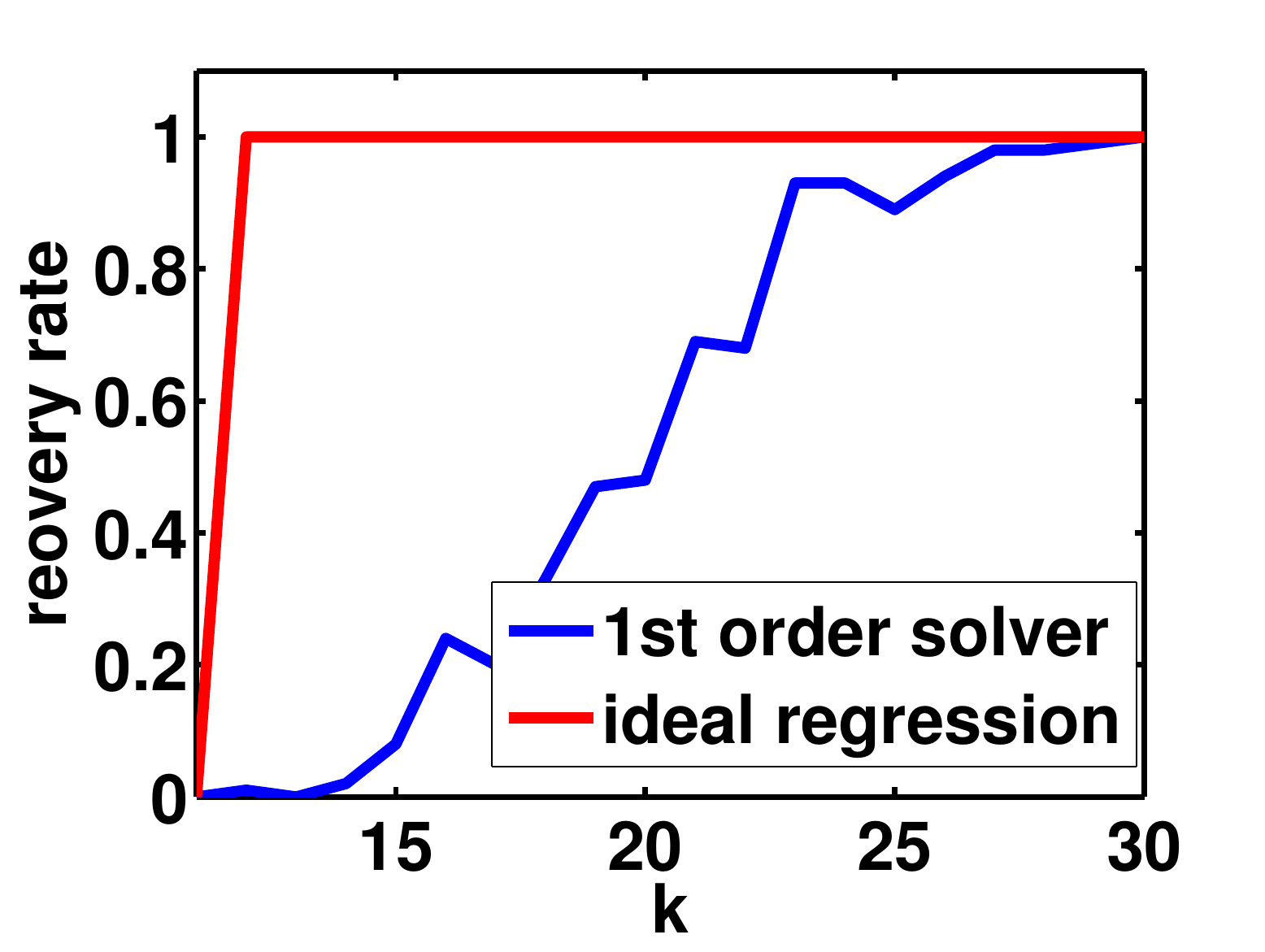}
}
\caption{Recovery rates averaged over $100$ repeats without any noise for ideal regression and for a first order solver in PhaseLift.}\label{fig:exp:plot 1}
\end{figure}

\begin{figure}
\centering
\subfigure[Noise level $\sigma=10^{-2}$.]{
\includegraphics[width=.4\textwidth]{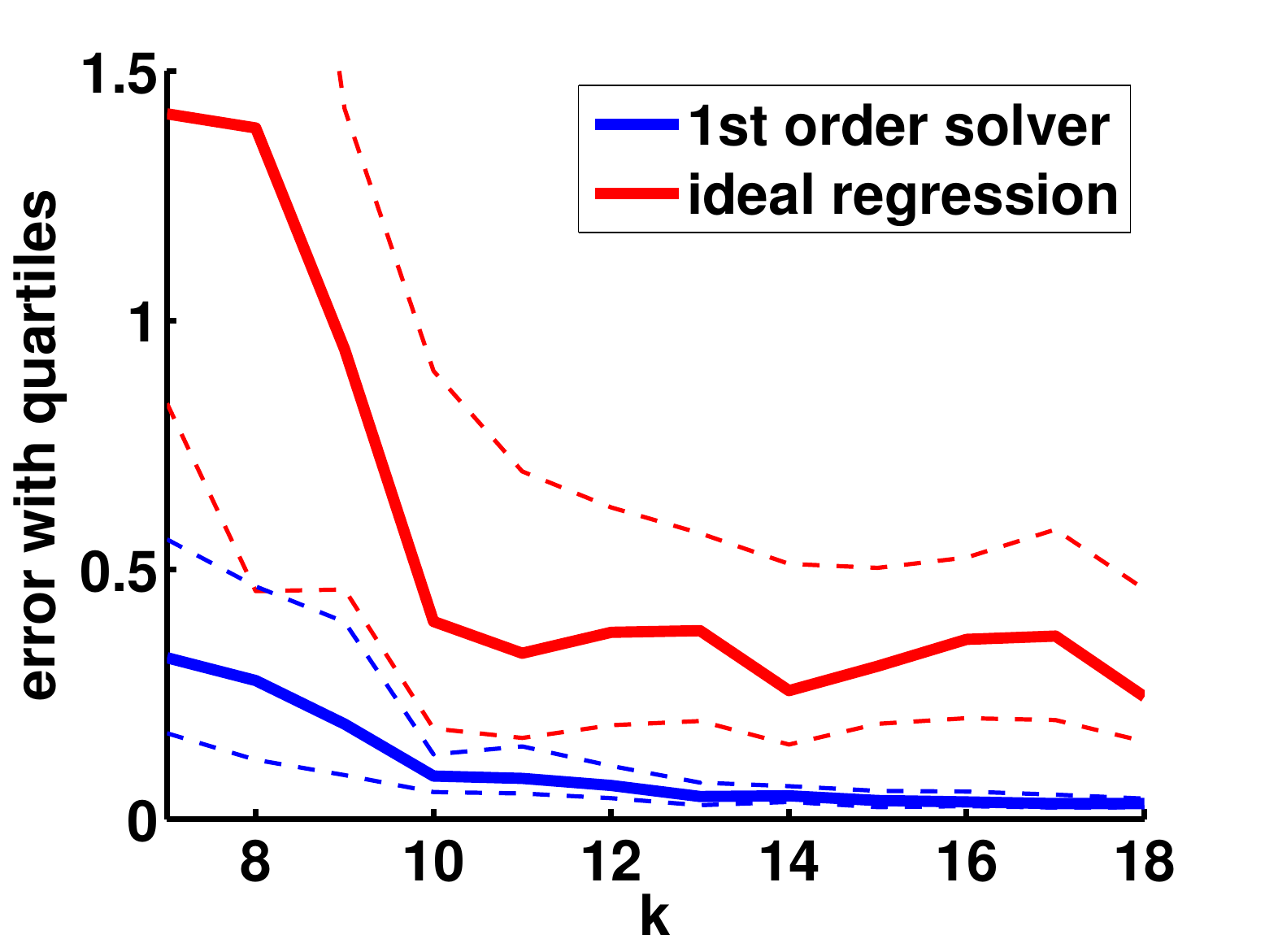}\label{subfig:a}
}
\;
\subfigure[Noise level $\sigma=10^{-4}$.]{
\includegraphics[width=.4\textwidth]{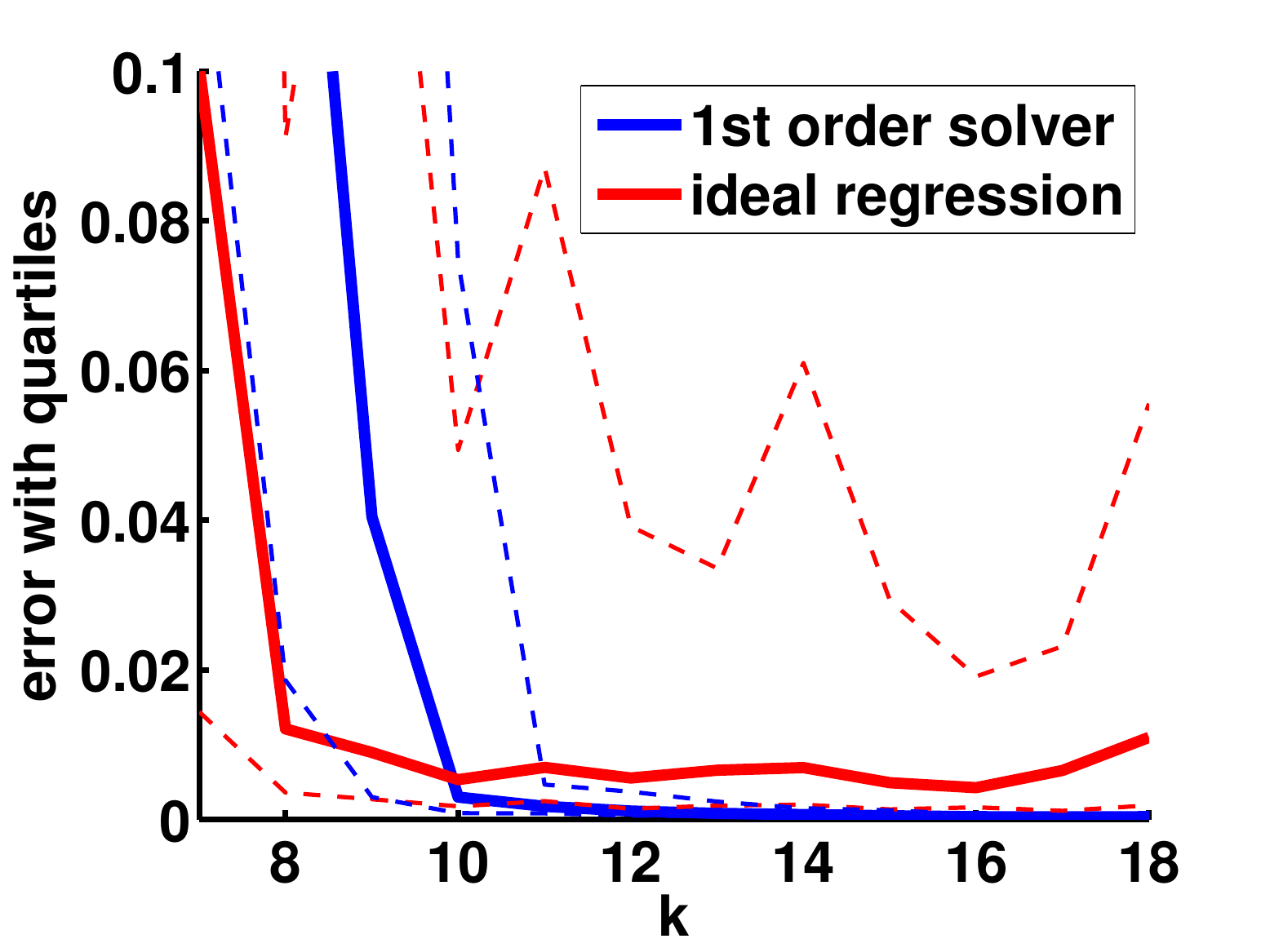}\label{subfig:b}
}
\caption{Mean squared error for $n=6$ and quartiles for $100$ repeats.}\label{fig:exp:plot 2}
\end{figure}

In this section we provide few numerical experiments illustrating that generic real signals can be identified from few generic magnitude measurements by using the inversion formula obtained from ideal regression as outlined in section~\ref{sec:inversion.idreg}. We also include a few comparisons to an alternative method. Classical phase retrieval algorithms such as Gerchberg/Saxton \cite{Gerchberg:1972kx} and Fienup's alternatives \cite{Fienup:1982vn} are customized to Fourier measurements, hence are also limited to this setting. An approach that can deal with generic measurements is PhaseLift~\cite{Candes:uq}, which is based on finding the feasible point of a semidefinite program and is proposed to be solved using first order methods. The theoretical results in \cite{Candes:uq} are asymptotic in the ambient dimension $n$ and no success guarantees are derived for fixed $n$. Nonetheless, PhaseLift is known to be quite successful and very robust against noise in practise. The complexity of ideal regression causes limits in the number of measurements that can be dealt with in practise, while it yields an explicit reconstruction formula. We shall study the performance of ideal regression and PhaseLift for few measurements.

In the numerical experiments, we choose the signal $x$ uniformly distributed on the sphere.  Measurements are performed by orthogonal rank-1 projectors, also uniformly distributed (according to the standard Haar measure on this set), and we deal with corrupted measurements $\tilde{b}=b+\eta$, where $\eta$ is Gaussian white noise of variance $\sigma$. The outcome of performance comparisons between ideal regression and PhaseLift very much depend on the noise level. If measurements are exact, then ideal regression yields signal recovery for generic $n+1\leq k\leq 3n$ measurements, a range, in which PhaseLift performs rather poorly, see Fig.~\ref{fig:exp:plot 1} for $n=6,8,10$. For inexact yet still very accurate measurements, in other words very low noise levels ($\sigma\approx 10^{-4}$), ideal regression still outperforms PhaseLift when the number of measurements is close to the threshold $n+1$, see Fig.~\ref{subfig:b}, with a comparable accuracy for higher noise levels ($\sigma\approx 10^{-2}$), cf.~Fig.~\ref{subfig:a}. Nonetheless, it must be mentioned that with slightly larger and hence more common noise levels, especially when the number of measurements increases, then PhaseLift is eventually to be favored since error rates are then significantly smaller than within ideal regression. It is interesting to note that ideal regression performs well close to the identifiability threshold $k=n+1$, whereas PhaseLift yields more accurate estimates as the number of samples increases.

\section*{Acknowledgements}

ME is funded by the Vienna Science and Technology Fund (WWTF) through project VRG12-009. FK is supported by Mathematisches Forschungsinstitut Oberwolfach (MFO).

\bibliographystyle{plainnat}
\bibliography{bibtex_phase}

\appendix
\newpage
\section{Algebraic Geometry Fundamentals}
\subsection{Algebraic Geometry Glossary}\label{app:algebraic-glossary}
We briefly give a glossary of algebraic terms used in the main corpus. Let $\KK=\RR$ or $\KK=\CC$.
\begin{Def}
A set $\calX\subseteq \KK^n$ is called \emph{algebraic variety} if there are polynomials $f_1,\dots, f_n$ variables such that
$$\calX=\{x\in\KK^n\;:\; f_1(x)=\dots = f_n(x)=0\}.$$
\end{Def}

\begin{Def}
The \emph{Zariski topology} on $\KK^n$ is the induced topology in which algebraic varieties are open. That is, \emph{Zariski closed} sets being finite unions of algebraic varieties, and \emph{Zariski open} sets the complement. The Zariski topology on some variety $\calX$ is the induced relative topology.
\end{Def}

\begin{Def}
An algebraic variety $\calX\subseteq \KK^n$ is called \emph{irreducible} if can not be written as a proper union of algebraic varieties. That is, if $\calX=\calX_1\cup \calX_2$ for algebraic varieties $\calX_1,\calX_2$, then $\calX_1\subseteq \calX_2$ or $\calX_2\subseteq \calX_1$.
\end{Def}

\begin{Def}\label{Def:morphism}
Let $f_1,\dots, f_m$ be polynomials in $n$ variables, let $\calX\subseteq \KK^n$ and $\calY\subseteq \KK^m$ be algebraic varieties. A mapping $$\phi:\calX\rightarrow \calY,\quad x\mapsto (f_1(x),\dots, f_m(x))$$
is called \emph{algebraic map} or \emph{morphism of algebraic varieties}.
\end{Def}

\begin{Def}
A morphism of algebraic varieties, as above, is called \emph{unramified at} $x\in\calX$ and \emph{unramified over} $\phi (x)\in\calY$, if there is a Borel-open neibhourhood $U\subseteq \KK^n$ (cave: not $U\subseteq \calX$), with $x\in U$ such that for all $z\in U$, if holds that $\card{\phi^{-1}\phi(x)}=\card{\phi^{-1}\phi(z)}$. If $\calX$ is irreducible, $\phi$ is called \emph{generically unramified} if the points $x\in\calX$ at which $\phi$ is ramified are contained in a proper Zariski closed subset of $\calX$.
\end{Def}

\begin{Def}
A generically unramified morphism, as above, with $\calX$ and $\calY$ irreducible, is called \emph{birational} if there is a proper Zariski closed subset $\calZ$ of $\calX$ such that $f$, restricted to $\calX\setminus\calZ$, is bijective.
\end{Def}

\subsection{Open Conditions and Generic Properties of Morphisms}\label{app:algebraic-geometry}
In this section, we will summarize some algebraic geometry results used in the main corpus. The following results will always be stated for algebraic varieties over $\CC$.

\begin{Prop}\label{Prop:irreducible}
Let $f:\calX\rightarrow \calY$ be a morphism of algebraic varieties (over any field). Then, if $\calX$ is irreducible, so is $f(\calX)$. In particular, if $f$ is surjective, and $\calX$ is irreducible, then $\calY$ also is.
\end{Prop}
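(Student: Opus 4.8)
The plan is to recognize this as the purely topological fact that the continuous image of an irreducible space is irreducible, once we observe that a morphism of varieties is continuous for the Zariski topology. So the argument splits into a continuity check, a topological lemma, and a remark handling the non-closedness of images.

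First I would verify continuity. Writing $f:\calX\to\calY$ as $x\mapsto(f_1(x),\dots,f_m(x))$, take an arbitrary Zariski-closed subset of $\calY$, of the form $\calY\cap\{y:g_1(y)=\dots=g_s(y)=0\}$. Its preimage is $\{x\in\calX:(g_j\circ f)(x)=0,\ 1\le j\le s\}$, and each composite $g_j\circ f$ is again a polynomial in the coordinates of $x$. Hence the preimage is cut out by finitely many polynomials and is Zariski-closed in $\calX$, so $f$ is continuous in the Zariski topology. This step is entirely mechanical and holds over any base field.

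Second I would prove the topological lemma. Equip $f(\calX)$ with the subspace Zariski topology and suppose it were reducible, say $f(\calX)=\calC_1\cup\calC_2$ with $\calC_1,\calC_2$ proper relatively closed subsets. Then $\calX=f^{-1}(\calC_1)\cup f^{-1}(\calC_2)$, and by continuity both preimages are Zariski-closed in $\calX$. Irreducibility of $\calX$ forces one of them, say $f^{-1}(\calC_1)$, to equal $\calX$; but then $f(\calX)\subseteq\calC_1$, contradicting that $\calC_1$ is a proper subset. Hence $f(\calX)$ is irreducible as a subspace.

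Third I would address the one point needing care, namely that $f(\calX)$ need not itself be closed, so it may fail to be an algebraic variety in the strict sense of the glossary (the projection of $\{xy=1\}$ onto a coordinate axis is the standard example). Since irreducibility is preserved under Zariski closure — any decomposition of $\overline{f(\calX)}$ into proper closed pieces would restrict to a decomposition of the dense subset $f(\calX)$ — the image determines an irreducible variety whenever one is needed downstream. The ``in particular'' claim is then immediate: if $f$ is surjective then $f(\calX)=\calY$, so $\calY$ inherits irreducibility directly. The non-closedness is thus a point requiring a clarifying remark rather than a genuine obstacle; everything else in the proof is formal.
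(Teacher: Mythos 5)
Your proof is correct and follows essentially the same route as the paper's: both argue by contraposition, writing a hypothetical proper decomposition $f(\calX)=\calZ_1\cup\calZ_2$, pulling it back via Zariski-continuity of $f$, and contradicting irreducibility of $\calX$. You merely make explicit two points the paper leaves implicit --- the mechanical verification that morphisms are Zariski-continuous, and the observation that properness of the pulled-back pieces (equivalently, that non-closed images cause no trouble, since irreducibility passes to and from the Zariski closure) must be checked --- which is a welcome tightening rather than a different argument.
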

\begin{proof}
This is classical; suppose the converse, that is, $f(\calX)=\calZ_1\cup \calZ_2$ is a proper union of algebraic sets. Then, using that $f$ is algebraic, and therefore continuous in the Zariski topology, it follows that $\calX$ is a proper union $\calX=f^{-1}(\calZ_1)\cup f^{-1}(\calZ_2)$ of algebraic sets. This contradicts $\calX$ being irreducible, proving the statement by contraposition.
\end{proof}

\begin{Thm}
Let $f:\calX\rightarrow \calY$ be a morphism of algebraic varieties. The function
$$\calY\rightarrow\NN,\quad y\mapsto \dim f^{-1}(y)$$
is upper semicontinuous in the Zariski topology.
\end{Thm}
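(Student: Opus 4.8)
The plan is to prove the equivalent statement that for each $n\in\NN$ the jump locus
$$S_n:=\{y\in\calY\;:\;\dim f^{-1}(y)\ge n\}$$
is Zariski closed in $\calY$. The first reduction is to the case where $\calX$ is irreducible: decomposing $\calX=\calX_1\cup\dots\cup\calX_s$ into irreducible components gives $f^{-1}(y)=\bigcup_i (f|_{\calX_i})^{-1}(y)$, so that $\dim f^{-1}(y)=\max_i \dim (f|_{\calX_i})^{-1}(y)$, and the locus $\{y:\max_i g_i(y)\ge n\}=\bigcup_i\{y:g_i(y)\ge n\}$ is a finite union of closed sets; hence it suffices to treat each restriction $f|_{\calX_i}$ separately, and I may assume $\calX$ irreducible. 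Next I would replace $\calY$ by the closure $\overline{f(\calX)}$ of the image, which is irreducible by Proposition~\ref{Prop:irreducible}. Since every nonempty fibre already lies over $f(\calX)\subseteq\overline{f(\calX)}$, each $S_n$ is contained in $\overline{f(\calX)}$ and its closedness there is the same as closedness in $\calY$; thus I have reduced to a \emph{dominant} morphism between \emph{irreducible} varieties.

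The engine of the proof is the fibre-dimension theorem for such an $f$: writing $r:=\dim\calX-\dim\calY=\trdeg_{\CC(\calY)}\CC(\calX)$, every nonempty fibre satisfies $\dim f^{-1}(y)\ge r$, and there is a dense open $V\subseteq f(\calX)$ on which $\dim f^{-1}(y)=r$ exactly. Granting this, I run Noetherian induction on $\dim\calY$. For $n\le r$ one has $\dim f^{-1}(y)\ge n$ precisely when the fibre is nonempty, so $S_n=f(\calX)$; this bottom case is the only place where the statement is sensitive to how the domain is taken, the honest reading being that the fibre-dimension function is restricted to the image $f(\calX)$, over which $S_n=f(\calX)$ is trivially closed (that $f(\calX)$ contains the dense open $V$ is Chevalley's constructibility theorem). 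For $n>r$, every $y\in V$ has $\dim f^{-1}(y)=r<n$, whence $S_n\subseteq W:=\calY\setminus V$, a proper closed subvariety with $\dim W<\dim\calY$. Over $W$ the fibres of $f$ coincide with those of the restricted morphism $f^{-1}(W)\to W$, so $S_n$ equals the corresponding jump locus for that morphism, which is closed in $W$, hence in $\calY$, by the inductive hypothesis. This closes the induction.

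The main obstacle is the fibre-dimension theorem itself, and within it the uniform lower bound $\dim f^{-1}(y)\ge r$ valid for \emph{all} $y$, not merely generically. The generic equality on a dense open is comparatively soft: after shrinking $\calY$ one factors $f$ through a finite (generically \'etale) map followed by a projection $\calY\times\mathbb{A}^{r}\to\calY$, and reads off the generic fibre dimension as the transcendence degree $r$. The lower bound requires cutting a fibre down by pulling back $\dim\calY$ hypersurfaces through $y$ and invoking Krull's principal-ideal theorem to bound how much each hypersurface can drop the dimension; keeping this bound uniform across the special fibres where the generic picture degenerates is the delicate point. Everything else—the reduction to the irreducible dominant case, the maximum-of-upper-semicontinuous-functions argument, and the Noetherian descent onto $W$—is formal.
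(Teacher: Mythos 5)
Your route is genuinely different from the paper's: the paper disposes of this statement with a one-line citation to EGA IV, Th\'eor\`eme 13.1.3, while you attempt the classical self-contained argument via the fibre-dimension theorem and Noetherian induction. Unfortunately there is a genuine gap, located exactly at the interface between your first reduction and your induction. You reduce once, at the top, to irreducible $\calX$, and you correctly notice that the bottom case $n\le r$ only gives closedness of $S_n$ \emph{relative to the image} $f(\calX)$. But your inductive step restricts to $g\colon f^{-1}(W)\to W$, whose source is in general \emph{reducible}, so the inductive hypothesis you invoke there is the general, reducible-source statement; and in the relative-to-image reading that statement is false, which also breaks your very first reduction: if each $S_n^{(i)}$ is only closed in $f(\calX_i)$, the union $\bigcup_i S_n^{(i)}$ need not be closed in $f(\calX)$, because $f(\calX_i)$ is merely constructible, and a boundary point of one component's jump locus can be covered by another component with a small fibre. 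Concretely, take $\calX=\{(x,y,z,w)\in\CC^4 \;:\; w(w-1)=0,\ wz=0\}$, the union of a copy of $\CC^3$ (where $w=0$) and a copy of $\CC^2$ (where $w=1$, $z=0$), and $f(x,y,z,w)=(x,\,(1-w)xy+wy)$, a surjective morphism onto $\CC^2$. Then $\dim f^{-1}(a,b)=1$ for $a\ne 0$ (the $z$-line over $(a,b/a)$), $\dim f^{-1}(0,0)=2$, and $\dim f^{-1}(0,b)=0$ for $b\ne 0$ (only the second component covers these points); so $S_1=\{(a,b):a\ne0\}\cup\{(0,0)\}$ is not Zariski closed, even though $f$ is surjective and the fibre-dimension function is everywhere $\NN$-valued. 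So the theorem is false as literally stated for possibly reducible varieties, and no repair of your induction can avoid this, since the induction necessarily passes through reducible sources even if one adds irreducibility and surjectivity hypotheses at the top (surjectivity is not preserved when restricting to $f^{-1}(W)\to W$).

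In fairness, the blemish is inherited from the paper: EGA IV 13.1.3, which the paper cites, is upper semicontinuity on the \emph{source}, namely of $x\mapsto \dim_x f^{-1}(f(x))$ on $\calX$, not on $\calY$; the target version as stated requires $f$ to be closed (e.g.\ proper or finite), so that images of closed sets are closed. The constructive news is that your entire toolkit proves the source version verbatim: the component reduction is harmless there because $E_n=\{x\in\calX\;:\;\dim_x f^{-1}(f(x))\ge n\}$ is the union of the corresponding sets of the components (every irreducible component of a fibre lies in a single $\calX_i$), and closedness is now tested upstairs in $\calX$ rather than inside an image; the bottom case gives $E_n=\calX_i$ for $n\le r$; and in the inductive step $E_n\subseteq f^{-1}(W)$ is closed in $f^{-1}(W)$, hence in $\calX$, so the Noetherian descent closes without any surjectivity or image-closedness issues. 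Your fibre-dimension engine itself (the uniform Krull lower bound $\dim \ge r$ on nonempty fibres and generic equality on a dense open of the image) is correctly stated and is exactly the right input. So the fix is either to prove, and for the paper to state, the source version, or to add a closedness hypothesis on $f$ before asserting semicontinuity on $\calY$.
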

\begin{proof}
This follows from~\cite[Th{\'e}or{\`e}me~13.1.3]{EGAIV.3}.
\end{proof}

\begin{Prop}\label{Prop:flatgen}
Let $f:\calX\rightarrow \calY$ be a morphism of algebraic varieties, with $\calY$ be irreducible. Then, there is an open dense subset $V\subseteq \calY$ such that $f: U\rightarrow V$, where $U=f^{-1}(V)$, is a flat morphism.
\end{Prop}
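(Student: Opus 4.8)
The plan is to recognize this as the classical \emph{generic flatness} theorem and to prove it in two stages: a reduction to the affine case using irreducibility and quasi-compactness, followed by Grothendieck's \emph{generic freeness lemma}, which supplies an honest localizing element of the base.

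First I would reduce to the affine setting. Since $\calY$ is irreducible, every nonempty Zariski-open subset is dense, and a finite intersection of nonempty opens is again nonempty; hence it suffices to produce \emph{some} nonempty open $V$ over which $f$ is flat. Because $f^{-1}(V)$ is open in $\calX$ and flatness at a point is local on the source, I may freely replace $\calY$ by any nonempty affine open. So assume $\calY=\Spec A$ where $A$ is the coordinate ring of an irreducible variety, i.e.\ a Noetherian integral domain. As $\calX$ is a variety it is Noetherian, so $f^{-1}(\calY)$ is covered by finitely many affine opens $\calX_1,\dots,\calX_m$. If for each $j$ I find a nonempty open $V_j\subseteq\calY$ over which $\calX_j\to\calY$ is flat, then $V:=\bigcap_j V_j$ is a nonempty (hence dense) open over which the whole restriction $f^{-1}(V)\to V$ is flat, since the $\calX_j\cap f^{-1}(V)$ cover $f^{-1}(V)$ and flatness is local on the source.

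This reduces everything to the algebraic heart: each $\calX_j=\Spec B$ with $B$ a finitely generated $A$-algebra, and I must find a nonzero $g\in A$ with $B_g$ \emph{free} (hence flat) over $A_g$, whereupon $V_j=D(g)$ works. To prove this generic freeness lemma I would use d\'evissage and Noether normalization. Viewing $B$ as a finite module over itself, the theory of associated primes gives a finite filtration with successive quotients $B/\mathfrak{p}_i$ for primes $\mathfrak{p}_i$; since an extension of frees is free and one may invert the product of finitely many separate localizing elements at once, it suffices to treat each $B/\mathfrak{p}_i$. Thus I reduce to $B$ a domain with $A\to B$ factoring as $A\twoheadrightarrow A/\mathfrak{q}\hookrightarrow B$, $\mathfrak{q}=\ker(A\to B)$. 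If $\mathfrak{q}\neq 0$, any nonzero $g\in\mathfrak{q}$ kills $B$, so $B_g=0$ is free; otherwise $A\hookrightarrow B$ are both domains, and generic Noether normalization yields, after inverting some nonzero $g\in A$, elements $t_1,\dots,t_d\in B_g$ algebraically independent over $A_g$ with $B_g$ module-finite over the polynomial ring $P:=A_g[t_1,\dots,t_d]$.

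The hard part, which I would isolate as a separate sub-lemma, is the following: a finite module $N$ over $P=R[t_1,\dots,t_d]$ (with $R=A_g$ a Noetherian domain) becomes free over $R$ after inverting a single nonzero element \emph{of $R$}. For $d=0$ this is the generic-rank argument — the torsion submodule of $N$ is annihilated by one nonzero element, while the torsion-free quotient contains a full-rank free submodule whose cokernel is torsion and hence killed after one further localization, so $N$ is free over $R$ generically. The genuinely nontrivial point is the inductive descent in $d$ while keeping the localizing element inside $R$ rather than $P$; I would carry it out by writing $P=P'[t_d]$, choosing generators to control $N$ as a $P'$-module, and applying the inductive hypothesis over $P'$. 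Once this sub-lemma is in hand, $P$ is free over $A_g$ and composites of frees are free, so $B_g$ is free over $A_g$ after the final localization, completing the argument.
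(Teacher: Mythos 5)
Your strategy is correct, but it is a genuinely different route from the paper's: the paper offers no argument at all, disposing of the proposition with a one-line citation to generic flatness, Th\'eor\`eme~6.9.1 of \cite{EGAIV.2}. What you propose is in effect a self-contained proof of that cited theorem, via Grothendieck's generic freeness lemma (EGA~IV, 6.9.2; see also Eisenbud, Thm.~14.4), and your reductions are sound: irreducibility makes nonempty opens dense and finite intersections nonempty; flatness is local on the source; the d\'evissage through a prime filtration with quotients $B/\mathfrak{p}_i$ is legitimate because free modules are projective, so extensions of frees are free, and finitely many localizing elements can be multiplied together; the case $\mathfrak{q}\neq 0$ and the generic Noether normalization after inverting one $g\in A$ are both standard. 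Two comparative remarks. First, in this paper's conventions a variety is an affine algebraic subset of $\KK^n$ and morphisms are polynomial maps, so $\calX$ and $\calY$ are already affine and your finite-affine-cover step is harmless but unnecessary here --- though it is exactly what makes your argument valid in the generality you state. Second, your route buys something the citation does not: an in-principle explicit localizing element $g$, so $V$ can be taken to be the distinguished open where $g\neq 0$, and \emph{freeness} (not merely flatness) of $B_g$ over $A_g$, a strictly stronger conclusion; the price is the commutative-algebra machinery the paper sidesteps entirely.

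One step of your sketch would fail as literally stated and needs repair. In the inductive descent you write $P=P'[t_d]$ and propose to ``apply the inductive hypothesis over $P'$''; but $N$ is finite over $P$ and in general \emph{not} finite over $P'$, so the hypothesis does not apply to $N$ itself. The standard fix is Grothendieck's filtration argument: fix generators $m_1,\dots,m_s$ of $N$ over $P$ and let $N_e\subseteq N$ be the $P'$-submodule generated by the elements $m_i t_d^{\,j}$ with $j\le e$. Then $N=\bigcup_e N_e$, each graded piece $Q_e:=N_e/N_{e-1}$ is a \emph{finite} $P'$-module, multiplication by $t_d$ induces surjections $Q_e\rightarrow Q_{e+1}$ for all $e$, and these are isomorphisms for $e\gg 0$ because the kernels of the composites $Q_{e_0}\rightarrow Q_e$ form an ascending chain in the Noetherian module $Q_{e_0}$. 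Hence only finitely many isomorphism classes of $Q_e$ occur; the inductive hypothesis applies to each, and after inverting a single product $g\in R$ every $Q_e$ becomes free over $R_g$, so each inclusion $(N_{e-1})_g\subseteq (N_e)_g$ splits and $N_g\cong\bigoplus_e (Q_e)_g$ is free --- possibly of infinite rank, which is all you need (and is what actually happens for $B_g$ itself, since $B$ is of finite type but not module-finite over $A$). With that step made precise, your proof is complete and correct.
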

\begin{proof}
This follows from~\cite[Th{\'e}or{\`e}me~6.9.1]{EGAIV.2}.
\end{proof}

\begin{Thm}\label{Thm:openconds}
Let $f:\calX\rightarrow \calY$ be a morphism of algebraic varieties. Let $d,\nu\in\NN$. Then, the following are open conditions for $y\in \calY$; that is, the sets $\{y\in\calY\;:\;\mbox{condition (*) holds for}\; y\}$ is a Zariski open subset of $\calY$.
\begin{description}
\item[(i)] $\dim f^{-1}(y)\le d$.
\item[(ii)] $f$ is unramified over $y$.
\item[(iii)] $f$ is unramified over $y$, and the number of irreducible components of $f^{-1}(y)$ equals $\nu$.
\end{description}
In particular, if $f$ is surjective, then the following is an open property as well:
\begin{description}
\item[(iv)] $f$ is unramified over $y$, and $\card {f^{-1}(y)}=\nu$.
\end{description}
\end{Thm}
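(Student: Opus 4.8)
The plan is to settle part (i) first and then bootstrap (ii)--(iv) from it. For (i) nothing beyond the already-cited upper semicontinuity of the fibre-dimension function is needed: the set $\{y\in\calY : \dim f^{-1}(y)\ge d+1\}$ is Zariski closed, so its complement $\{y : \dim f^{-1}(y)\le d\}$ is Zariski open, which is exactly the assertion. For (ii) the first move is to reduce to finite fibres. By the definition of unramifiedness in the glossary, $f$ being unramified over $y$ forces the fibre cardinality $\card f^{-1}(y)$ to be finite and locally constant in the Euclidean topology; in particular $\dim f^{-1}(y)\le 0$. Hence the unramified-over locus is contained in the open set $\calY_0:=\{y:\dim f^{-1}(y)\le 0\}$ supplied by (i), and I would replace $f$ by its restriction $f_0\colon\calX_0\to\calY_0$ with $\calX_0=f^{-1}(\calY_0)$, which is quasi-finite.

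The subtlety I would stress is that the paper's definition is genuinely about Euclidean-local constancy of the fibre count, and is therefore strictly stronger than ``the fibre avoids the ramification locus'': an empty or incomplete fibre sitting next to non-empty ones (a sheet escaping to infinity) is ramified-over even though it meets no ramification point. To capture this I would invoke Zariski's Main Theorem to factor $f_0$ through an open immersion $j\colon\calX_0\hookrightarrow\bar\calX$ and a finite morphism $\bar f\colon\bar\calX\to\calY_0$, arranging $j(\calX_0)$ to be dense in each component of $\bar\calX$. Writing $\partial:=\bar\calX\setminus j(\calX_0)$ and $\Delta:=\bar f(\partial)$, the set $\Delta$ is Zariski closed because finite morphisms are closed, and over $\calY_1:=\calY_0\setminus\Delta$ the entire $\bar f$-fibre lies in $\calX_0$, so $f$ is proper there, i.e.\ a genuine finite covering. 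The unramified locus of $f$ in the source is Zariski open (its complement $R$ is the support of the coherent sheaf $\Omega_{\calX/\calY}$), and $\bar f(R\cap\bar f^{-1}(\calY_1))$ is Zariski closed in $\calY_1$, again by finiteness. I would then verify the identity
$$\{\,y\in\calY : f \text{ unramified over } y\,\}=\calY_1\setminus \bar f\!\left(R\cap \bar f^{-1}(\calY_1)\right),$$
whose right-hand side is visibly Zariski open. The inclusion ``$\supseteq$'' holds because over $\calY_1\setminus\bar f(R)$ the map is a finite étale covering, hence a Euclidean covering with locally constant fibre cardinality; ``$\subseteq$'' requires that a point of $\Delta$, or a point whose fibre meets $R$, be ramified-over, and for the $\Delta$-part this is exactly where the density of $\calX_0$ in $\bar\calX$ enters, forcing the fibre count to jump near an escaping boundary point.

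For (iii) I would work entirely inside the open set just produced. Over the unramified-over locus each fibre is a reduced, zero-dimensional scheme, so its number of irreducible components equals its number of points, i.e.\ equals $\card f^{-1}(y)$; and this count is by construction locally constant on the finite étale covering. Consequently $y\mapsto \card f^{-1}(y)$ is locally constant on the (already open) unramified-over locus, so each level set $\{\,y:\card f^{-1}(y)=\nu\,\}$ is relatively clopen there, hence Zariski open in $\calY$. Part (iv) is then immediate: under surjectivity the fibres are non-empty, and for an unramified-over $y$ the reducedness and zero-dimensionality identify the number of irreducible components with $\card f^{-1}(y)$, so (iv) is literally the instance of (iii) in which the component count is read as a cardinality.

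The main obstacle, as flagged, is precisely the gap between the Euclidean and the naive algebraic notions of unramifiedness -- the escaping-sheets phenomenon -- and Zariski's Main Theorem is what makes it tractable by restoring properness after deleting the closed escape locus $\Delta$. As a softer alternative (or a sanity check) one can observe that the unramified-over locus is Euclidean open straight from the definition, since local constancy of the fibre count is an open condition; granting that this locus is also constructible -- which one would establish by stratifying $\calY$ via generic flatness (Proposition~\ref{Prop:flatgen}) and Noetherian induction so that the fibre count is constant on each stratum -- the coincidence of the Euclidean and Zariski closures of a constructible set upgrades Euclidean-openness to Zariski-openness, recovering the claim without an explicit model $\bar\calX$. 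Either way, the delicate point one cannot avoid is verifying that boundary points genuinely disturb the fibre cardinality.
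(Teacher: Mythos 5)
Your route is genuinely different from the paper's: the paper disposes of (i)--(iii) by direct citation to EGA IV (Corollaire~6.1.2 for the fibre-dimension condition, Th\'eor\`eme~12.2.4(v) and (vi) for the two unramifiedness conditions) and then obtains (iv) formally by intersecting the open set of (i) with $d=0$ with the open set of (iii), using surjectivity. Your part (i) agrees with this in substance, and your ZMT strategy is the standard skeleton of a self-contained proof. But there is a genuine gap, and it sits exactly at the step you flagged: the displayed identity
$$\{\,y\in\calY : f \text{ unramified over } y\,\}=\calY_1\setminus \bar{f}\left(R\cap \bar{f}^{-1}(\calY_1)\right)$$
is false in the stated generality, in the inclusion ``$\subseteq$''. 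The paper's unramifiedness is tested only in Euclidean neighbourhoods of points $x$ of the fibre in the source; an escaping sheet over $y$ is invisible from such neighbourhoods, and the global fibre count need not jump if another component of $\calX$ compensates. Concretely, take $\calX=\Van(xy-1)\cup\{(0,5)\}\subseteq\CC^2$ (a reducible variety; the theorem assumes no irreducibility) with $f(x,y)=x$. Every fibre has exactly one point, and one checks that $f$ is unramified at every point of $\calX$ in the paper's sense, so the left-hand side is all of $\CC$. Yet in any ZMT model the closure of the hyperbola component is finite over $\CC$, hence surjective, so it acquires a boundary point over $0$ which cannot lie in the open image of $\calX$ (the hyperbola is already closed in $\calX$, so its closure gains no points inside $j(\calX)$); thus $0\in\Delta$ and your right-hand side omits $0$. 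Your argument therefore proves only that a certain Zariski open set is \emph{contained} in the unramified-over locus, which does not establish openness of that locus, i.e., does not prove (ii).

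Two secondary points compound this. First, your preliminary reduction over-reads the definition: ``unramified over $y$'' does not imply that $y\mapsto\card f^{-1}(y)$ is locally constant as a function on $\calY$ near $y$ (for instance, when $f^{-1}(y)$ is empty the condition is vacuous while nearby fibres may be non-empty, as for $\Van(xy-1)\to\CC$ over $0$), and under a literal reading of cardinalities it does not even force finiteness (a projection with all fibres infinite of equal cardinality is ``unramified'' everywhere); this also affects your (iii), where ``relatively clopen in a Euclidean sense inside a Zariski open set'' does not by itself yield Zariski-openness without the constructibility input. Second, your fallback -- Euclidean-openness plus constructibility via generic flatness, then the coincidence of Euclidean and Zariski closures of constructible sets -- is indeed the sounder skeleton and closest in spirit to what the cited EGA results deliver; but its first step, that the unramified-over locus is Euclidean open ``straight from the definition'', founders on the same escaping-sheet/compensation phenomenon, so the delicate point is relocated rather than resolved. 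Note finally that the paper's deduction of (iv) intersects (iii) with the open condition $\dim f^{-1}(y)\le 0$ from (i), precisely so as not to rely on the claim that unramifiedness alone forces zero-dimensional fibres, which your version of (iv) does.
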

\begin{proof}
(i) follows from~\cite[Corollaire~6.1.2]{EGAIV.2}.\\
(ii) follows from~\cite[Th{\'e}or{\`e}me~12.2.4(v)]{EGAIV.3}.\\
(iii) follows from~\cite[Th{\'e}or{\`e}me~12.2.4(vi)]{EGAIV.3}.\\
(iv) follows from (i), applied in the case $\dim f^{-1}(y)\le 0$ which is equivalent to $\dim f^{-1}(y) = 0$ due to surjectivity of $f$, and (iii).
\end{proof}

\begin{Cor}\label{Cor:genericprops}
Let $f:\calX\rightarrow \calY$ be a generically unramified and surjective morphism of algebraic varieties, with $\calY$ be irreducible. Then, there are unique $d,\nu\in\NN$ such that the following sets are Zariski closed, proper subsets of $\calY$ (and therefore Hausdorff zero sets):
\begin{description}
\item[(i)] $\{y\;:\;\dim f^{-1}(y)\neq d\}$
\item[(ii)] $\{y\;:\;f\;\mbox{is ramified at}\;y\}$
\item[(iii)] $\{y\;:\;f\;\mbox{is ramified at}\;y\}\cup\{y\;:\;\card {f^{-1}(y)}\neq \nu\}$
\end{description}
\end{Cor}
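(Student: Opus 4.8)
The plan is to reduce all three statements to a single principle: each of the ``good'' conditions on $y\in\calY$ is Zariski open by Theorem~\ref{Thm:openconds}, and since $\calY$ is irreducible, a \emph{nonempty} Zariski open subset is automatically dense, so its complement is a proper Zariski closed (hence lower-dimensional, hence Hausdorff measure zero) subset. Thus the entire task is to pin down the correct generic values $d,\nu$ and to exhibit, in each case, at least one point $y$ at which the good condition holds; uniqueness of $d$ and $\nu$ then follows because two distinct generic values would force two disjoint dense open sets inside the irreducible $\calY$, which is impossible.

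For (i), I would invoke the upper semicontinuity of $y\mapsto\dim f^{-1}(y)$. This function is integer-valued and bounded below, so it attains a minimum value $d$; the set $\{y:\dim f^{-1}(y)\le d\}$ is Zariski open by Theorem~\ref{Thm:openconds}(i), it coincides with $\{y:\dim f^{-1}(y)=d\}$ (nothing lies strictly below the minimum), and it is nonempty, hence dense. Its complement $\{y:\dim f^{-1}(y)\neq d\}=\{y:\dim f^{-1}(y)>d\}$ is the desired proper closed set, and $d$ is uniquely characterized as the value taken on this dense open locus.

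For (ii), the locus $W=\{y:f\text{ unramified over }y\}$ is Zariski open by Theorem~\ref{Thm:openconds}(ii), so the only issue is nonemptiness. Here I would use that ``generically unramified'' presupposes $\calX$ irreducible and means that the ramification locus $R\subsetneq\calX$ is a proper closed subvariety, so $\dim R\le\dim\calX-1$. The key point --- and what I expect to be the main obstacle --- is that ``$f$ unramified over $y$'' demands unramifiedness at every point of the fiber, so I must rule out that $R$ surjects onto $\calY$. I would argue that the notion of unramifiedness forces finite fibers on the (dense) unramified locus, whence the generic fiber of the surjection $f$ is $0$-dimensional and $\dim\calX=\dim\calY$; therefore $\dim\overline{f(R)}\le\dim R<\dim\calX=\dim\calY$, so the constructible set $f(R)$ cannot fill the irreducible $\calY$. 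Any $y\notin f(R)$ then has its whole fiber inside the unramified locus, so $W\supseteq\calY\setminus\overline{f(R)}\neq\emptyset$, making $W$ dense and its complement $\{y:f\text{ ramified over }y\}$ proper closed.

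For (iii), I would refine (ii) using Theorem~\ref{Thm:openconds}(iv): for each $\nu$, the set $W_\nu=\{y:f\text{ unramified over }y,\ \card{f^{-1}(y)}=\nu\}$ is Zariski open, and these sets are pairwise disjoint with union the nonempty open $W$ from (ii). Since $W$, being a nonempty open subset of the irreducible $\calY$, is itself irreducible and cannot be split into more than one nonempty relatively open piece, exactly one $W_\nu$ is nonempty; this determines the unique $\nu$ and shows $W_\nu$ is dense. Its complement is precisely $\{y:f\text{ ramified at }y\}\cup\{y:\card{f^{-1}(y)}\neq\nu\}$, which is therefore the required proper Zariski closed subset. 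In each part the Hausdorff-measure-zero assertion is then immediate, since a proper Zariski closed subset of an irreducible variety has strictly smaller dimension.
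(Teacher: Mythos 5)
Your proof is correct and takes essentially the same route as the paper's: everything is reduced to the openness statements of Theorem~\ref{Thm:openconds} together with the observation that a nonempty Zariski open subset of the irreducible $\calY$ is dense, so its complement is a proper Zariski closed (hence Hausdorff measure zero) subset. The only differences are that you explicitly verify the nonemptiness of each open locus (in particular the dimension count showing the ramification locus $R\subsetneq\calX$ cannot have image dominating $\calY$) and use part (iv) rather than (iii) of the theorem for the fiber-cardinality statement --- details the paper's one-line proof leaves implicit.
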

\begin{proof}
This is implied by Theorem~\ref{Thm:openconds}~(i), (ii) and (iii), using that a non-zero open subset of the irreducible variety $\calY$ must be open dense, therefore its complement in $\calY$ is a closed and a proper subset of $\calY$.
\end{proof}

\begin{Prop}\label{Prop:opencert}
Let $f:\calX\rightarrow\calY$ be a morphism of algebraic varieties, with $\calY$ irreducible. Then, the following are equivalent:
\begin{description}
\item[(i)] $f$ is unramified over $y$ and $\card{f^{-1}(y)} = \nu$.
\item[(ii)] There is a Borel open neighborhood $U\subseteq \calY$ of $y\in U$, such that $f$ is unramified over $U$ and $\card{f^{-1}(z)} = \nu$ for all $z\in U$.
\item[(iii)] There is a Zariski open neighborhood $U\subseteq \calY$ of $y\in U$, dense in $\calY$, such that $f$ is unramified over $U$ and $\card{f^{-1}(z)} = \nu$ for all $z\in U$.
\end{description}
\end{Prop}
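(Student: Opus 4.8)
The plan is to prove the cycle (iii)$\Rightarrow$(ii)$\Rightarrow$(i)$\Rightarrow$(iii), of which only the last implication carries any content. For (iii)$\Rightarrow$(ii) I would simply note that every Zariski open set is Borel open (the Zariski topology is coarser), so the witnessing neighborhood $U$ of (iii) serves verbatim as a witness for (ii). For (ii)$\Rightarrow$(i) I would instantiate the universally quantified clause of (ii) at the single point $z=y\in U$, which returns exactly the two assertions ``$f$ is unramified over $y$'' and ``$\card{f^{-1}(y)}=\nu$'' of (i). Both steps are purely formal and use no algebraic input.

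The substance is (i)$\Rightarrow$(iii), and the strategy is to realize the desired $U$ as a finite intersection of Zariski open sets supplied by Theorem~\ref{Thm:openconds}, and then to invoke irreducibility of $\calY$ to obtain density. Concretely I would set
$$U := \{z\in\calY : \dim f^{-1}(z)\le 0\}\;\cap\;\{z\in\calY : f \text{ is unramified over } z \text{ and } f^{-1}(z) \text{ has exactly } \nu \text{ irreducible components}\},$$
which is Zariski open by parts (i) and (iii) of Theorem~\ref{Thm:openconds}. The first task is to check that $y\in U$ under hypothesis (i): $f$ is unramified over $y$ by assumption, and since $\card{f^{-1}(y)}=\nu$ is a natural number the fiber over $y$ is a set of $\nu$ distinct points, hence $0$-dimensional with exactly $\nu$ irreducible components, so $y$ lies in both sets. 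The second task is to upgrade the defining predicate of $U$ from ``$\nu$ irreducible components'' to the genuine cardinality statement $\card{f^{-1}(z)}=\nu$: for every $z\in U$ the bound $\dim f^{-1}(z)\le 0$ forces the fiber to be finite, so its irreducible components are precisely its points, whence the component count equals $\card{f^{-1}(z)}$ and we conclude $\card{f^{-1}(z)}=\nu$ together with unramifiedness throughout $U$. Finally, since $\calY$ is irreducible and $U$ is a nonempty Zariski open subset, $U$ is dense in $\calY$, delivering the density clause of (iii).

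I expect the main obstacle to be exactly this reconciliation of the \emph{number of irreducible components} bookkeeping of Theorem~\ref{Thm:openconds}(iii) with the actual fiber cardinality that appears in the statement: this is the reason one must also carry the dimension bound from Theorem~\ref{Thm:openconds}(i), for without forcing $0$-dimensionality a positive-dimensional fiber could exhibit $\nu$ components while containing infinitely many points. A minor point still to dispatch is the degenerate value $\nu=0$ (that is, $y\notin f(\calX)$), where the same intersection $U$ works under the empty-fiber conventions, the fiber having zero components and dimension $-\infty$, so that $y\in U$ and $U$ describes the locus of empty fibers; this avoids needing surjectivity of $f$, which is why I prefer this intersection argument to a direct appeal to Theorem~\ref{Thm:openconds}(iv).
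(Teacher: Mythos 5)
Your proof is correct in substance, and it takes a genuinely different (and in one respect cleaner) route than the paper's. The paper disposes of the equivalence in two lines by citing Corollary~\ref{Cor:genericprops} together with irreducibility of $\calY$, plus the remark that either condition forces generic unramifiedness via Theorem~\ref{Thm:openconds}~(ii); note, however, that Corollary~\ref{Cor:genericprops} carries hypotheses of surjectivity and generic unramifiedness that Proposition~\ref{Prop:opencert} itself does not state, so the paper's citation is, strictly read, incomplete on that point. You instead work directly from the open conditions of Theorem~\ref{Thm:openconds}~(i) and~(iii), intersecting the locus $\{z : \dim f^{-1}(z)\le 0\}$ with the locus where $f$ is unramified and the fiber has exactly $\nu$ irreducible components; your reconciliation of component count with cardinality via the dimension bound is exactly the right bookkeeping, and it is precisely what lets you bypass part~(iv) with its surjectivity hypothesis. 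Density from ``nonempty Zariski open in an irreducible variety'' and the formal implications (iii)$\Rightarrow$(ii)$\Rightarrow$(i) are all fine. So your argument buys a self-contained proof without the surjectivity assumption, at the cost of redoing what the corollary packages.

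One caveat on your closing $\nu=0$ remark: under the paper's definition, ``unramified over'' is only defined at points of the form $f(x)$, i.e., at points of the image, so for $y\notin f(\calX)$ condition (i) is not satisfiable at all and the $\nu=0$ case is vacuous rather than ``handled by the empty-fiber conventions.'' If one instead declares empty fibers vacuously unramified, the implication (i)$\Rightarrow$(iii) genuinely fails: for $f:\CC^2\rightarrow\CC^2$, $(s,t)\mapsto (s,st)$, the point $y=(0,1)$ has empty fiber, yet every dense Zariski open subset of $\CC^2$ meets the locus $\{s\neq 0\}$, where fibers are singletons, so no neighborhood $U$ as in (iii) with $\nu=0$ exists --- and under that reading the set in Theorem~\ref{Thm:openconds}~(iii) with $\nu=0$ would not even be open, so your appeal to it is shaky exactly there. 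You should either restrict to $\nu\ge 1$ or assume $y\in f(\calX)$; this does not affect the applications in the paper, which use $\nu=1$.
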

\begin{proof}
The equivalence is implied by Corollary~\ref{Cor:genericprops} and the fact that $\calY$ is irreducible. Note that either condition implies that $f$ is generically unramified due to Theorem~\ref{Thm:openconds}~(ii) and irreducibility of $\calY$.
\end{proof}

\subsection{Real versus Complex Genericity}\label{app:realcompgen}

We derive some elementary results how generic properties over the complex and real numbers relate. While some could be taken for known results, they appear not to be folklore - except maybe Lemma~\ref{Lem:cmprl}. In any case, they seem not to be written up properly in literature known to the authors.

\begin{Def}
Let $\calX\subseteq \CC^n$ be a variety. We define the \emph{real part} of $\calX$ to be $\calX_\RR:=\calX\cap \RR^n$.
\end{Def}

\begin{Lem}\label{Lem:cmprl}
Let $\calX\subseteq \CC^n$ be a variety. Then, $\dim \calX_\RR\le \dim \calX$, where $\dim \calX_\RR$ denotes the Krull dimension of $\calX_\RR$, regarded as a (real) subvariety of $\RR^n$, and $\dim \calX$ the Krull dimension of $\calX$, regarded as subvariety of $\CC^n$.
\end{Lem}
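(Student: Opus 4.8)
The plan is to identify Krull dimension with the transcendence degree of the function field, and to transfer algebraic independence of coordinate functions from $\RR$ to $\CC$ by splitting polynomials into real and imaginary parts. The point is that the naive topological-dimension argument only gives $\dim \calX_\RR \le 2\dim\calX$ (a complex variety of dimension $d$ has real topological dimension $2d$), so a finer field-theoretic argument is needed to obtain the sharp bound.

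First I would reduce to the irreducible case. Set $m := \dim \calX_\RR$ and pick an irreducible component $W$ of $\calX_\RR$ (over $\RR$) with $\dim W = m$. Decomposing the complex variety into finitely many irreducible components $\calX = \calX_1 \cup \dots \cup \calX_s$, the inclusion $W \subseteq \calX$ gives $W = \bigcup_j (W \cap \calX_j)$, a finite union of real Zariski-closed subsets of $W$; irreducibility of $W$ then forces $W \subseteq \calX_j$ for some $j$. Since $\dim \calX = \max_j \dim \calX_j$, it suffices to prove $\dim \calX_j \ge m$.

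Next I would use $\dim W = \trdeg_\RR \RR(W) = m$. The coordinate functions $x_1,\dots,x_n$ generate $\RR(W)$ over $\RR$, so a transcendence basis can be extracted from among them; after reindexing, $x_1,\dots,x_m$ are algebraically independent over $\RR$ as functions on $W$. The key step is to show these same functions stay algebraically independent over $\CC$ on $\calX_j$. Suppose not: then a nonzero $g \in \CC[t_1,\dots,t_m]$ satisfies $g(x_1,\dots,x_m)=0$ on $\calX_j$, hence on $W \subseteq \calX_j$. Writing $g = g_1 + \iota g_2$ with $g_1,g_2 \in \RR[t_1,\dots,t_m]$ and evaluating at points of $W$, where all coordinates are real, the real and imaginary parts separate, so $g_1$ and $g_2$ each vanish as functions on $W$. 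Algebraic independence over $\RR$ forces $g_1 = g_2 = 0$ as polynomials, contradicting $g \ne 0$. Therefore $\trdeg_\CC \CC(\calX_j) \ge m$, i.e. $\dim \calX_j \ge m$, and so $\dim \calX \ge m = \dim \calX_\RR$.

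The main obstacle is not computational but conceptual: making sure the real–imaginary splitting is deployed correctly, since this is exactly where the hypothesis $\RR \subseteq \CC$ is genuinely used and where the improvement over the crude topological bound comes from. The two points requiring care are (a) that $W$ lands inside a single complex irreducible component (handled by the finite-union/irreducibility argument above) and (b) that the coordinate functions chosen to be algebraically independent over $\RR$ on $W$ really do remain independent over $\CC$ on $\calX_j$ — which is precisely the content of the splitting step.
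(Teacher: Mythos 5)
Your proof is correct, but it proceeds quite differently from the paper's. The paper argues by equations: it invokes the fact (citing Mumford) that $\calX$ of codimension $k$ is contained in a complete intersection $\calX'=\Van(f_1,\dots,f_k)$, replaces each $f_i$ by the real polynomial $g_i:=f_i\cdot f_i^*$, checks that the non-zero-divisor property survives this replacement (so $g_1,\dots,g_k$ is again a regular sequence over $\RR$), and concludes $\dim\calX_\RR\le\dim\Van(g_1,\dots,g_k)\cap\RR^n\le n-k=\dim\calX$ by the codimension count for complete intersections. You argue instead by functions: identifying Krull dimension with transcendence degree, reducing to an irreducible real component $W$ sitting inside a single complex component $\calX_j$ (your finite-union/irreducibility step is the right way to handle this, and the needed closedness of $W\cap\calX_j$ in the real Zariski topology holds because the real points of $\calX_j$ are cut out by real and imaginary parts of its defining polynomials), and transferring algebraic independence of coordinate functions from $\RR$ to $\CC$ by splitting a hypothetical relation $g=g_1+\iota g_2$ and evaluating at the real points of $W$. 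Both uses of complex conjugation are the same trick in dual guises — the paper conjugates the \emph{equations}, you conjugate the would-be \emph{relations}. What each buys: your route is more self-contained, needing only Noether normalization ($\dim=\trdeg$, valid over any base field) and elementary field theory, with no citation to the complete-intersection containment and no regular-sequence bookkeeping; the paper's route is shorter given the cited machinery and yields an explicit real complete intersection $\calY\supseteq\calX$, a construction it reuses almost verbatim in the proof of Proposition~\ref{Prop:realdefobs}~(i). Two cosmetic points: you should dispose of the trivial case $\calX_\RR=\varnothing$ explicitly, and it is worth a sentence that algebraic independence on $W$ over $\RR$ means non-membership in the full vanishing ideal $\Id(W)$ — which over $\RR$ is \emph{defined} as the ideal of polynomials vanishing on the point set, so no Nullstellensatz is needed anywhere in your argument.
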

\begin{proof}
Let $k=n - \dim\calX$. By~\cite[section~1.1]{Mumford}, $\calX$ is contained in some complete intersection variety $\calX'=\Van (f_1,\dots, f_k)$. That is $(f_1,\dots, f_k)$ is a complete intersection, with $f_i\in\CC[X_1,\dots, X_n]$ and $\dim\calX'=\dim\calX$, such that $f_i$ is a non-zero divisor modulo $f_1,\dots, f_{i-1}$. Define $g_i:=f_i\cdot f_i^*$, one checks that $g_i\in\RR[X_1,\dots, X_n]$, and define $\calY:=\Van (g_1,\dots, g_k)$ and $\calY_\RR:=\calY\cap \RR^n$. The fact that $f_i$ is a non-zero divisor modulo $f_1,\dots, f_{i-1}$ implies that $g_i$ is a non-zero divisor modulo $g_1,\dots, g_{i-1}$; since $g_i\cdot h \cong 0$ modulo $g_1,\dots, g_{i-1}$ implies $f_i\cdot (h\cdot f_i^*) \cong 0$ modulo $f_1,\dots, f_{i-1}$. Therefore, $\dim\calY_\RR \le \dim\calX$; by construction, $\calX'\subseteq \calY$, and $\calX\subseteq \calX'$, therefore $\calX_\RR\subseteq \calY_\RR$, and thus $\dim \calX_\RR\le \dim \calY_\RR$. Combining it with the above inequality yields the claim.
\end{proof}

\begin{Def}
Let $\calX\subseteq \CC^n$ be a variety. If $\dim \calX = \dim \calX_\RR$, we call $\calX$ \emph{observable over the reals}. If $\calX$ equals the (complex) Zariski-closure of $\calX_\RR$, we call $\calX$ \emph{defined over the reals}.
\end{Def}

\begin{Prop}\label{Prop:realdefobs}
Let $\calX\subseteq \CC^n$ be a variety.
\begin{description}
\item[(i)] If $\calX$ is defined over the reals, then $\calX$ is also observable over the reals.
\item[(ii)] The converse of (i) is false.
\item[(iii)] If $\calX$ irreducible and observable over the reals, then $\calX$ is defined over the reals.
\end{description}
\end{Prop}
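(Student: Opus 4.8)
The plan is to route all three parts through one dimension identity relating $\calX$ to the complex Zariski closure of its real locus. First I would record the bookkeeping. Writing $\calZ:=\overline{\calX_\RR}$ for that closure inside $\CC^n$, the set $\calX_\RR=\calX\cap\RR^n$ is a genuine real variety (cut out over $\RR$ by the real and imaginary parts of the defining polynomials of $\calX$), its real points are recovered as $\calZ_\RR=\calX_\RR$ (a real point killing every complex polynomial that vanishes on $\calX_\RR$ kills every real one, hence lies in $\calX_\RR$), and the complex vanishing ideal of $\calZ$ is the complexification $\Id(\calX_\RR)\otimes_\RR\CC$ of the real one, since $g+\iota h$ vanishes on the real set $\calX_\RR$ exactly when $g$ and $h$ both do. Consequently the coordinate ring of $\calZ$ is $(\RR[X_1,\dots,X_n]/\Id(\calX_\RR))\otimes_\RR\CC$.

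For (i) I would use that the Krull dimension of a finitely generated algebra is invariant under base field extension, which gives $\dim_\CC\calZ=\dim_\RR\calX_\RR$ outright; specializing to the hypothesis $\calX=\calZ$ then yields $\dim_\CC\calX=\dim_\RR\calX_\RR$, i.e. observability. I expect this to be the one genuinely nontrivial input. Lemma~\ref{Lem:cmprl} supplies only the bound $\dim_\RR\calX_\RR\le\dim_\CC\calZ$, and it is the reverse inequality $\dim_\CC\calZ\le\dim_\RR\calX_\RR$ that forces me to invoke base-change invariance of dimension (standard commutative algebra, e.g. Matsumura/Eisenbud), or equivalently the real-algebraic fact that a real variety and its complexification have equal dimension. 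Everything else is formal.

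For (ii) it is enough to produce a single reducible example, and I would take $\calX=\Van(X^3+X)=\{0,\iota,-\iota\}\subseteq\CC$: here $\calX_\RR=\{0\}$, so $\dim_\CC\calX=0=\dim_\RR\calX_\RR$ (observable), yet $\overline{\calX_\RR}=\{0\}\subsetneq\calX$, so $\calX$ is not defined over the reals. Reducibility is essential here, which is precisely what the hypothesis of (iii) excludes.

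For (iii), assuming $\calX$ irreducible and observable with $d:=\dim_\CC\calX=\dim_\RR\calX_\RR$, I would argue as follows. Closedness of $\calX$ gives $\calZ\subseteq\calX$; applying Lemma~\ref{Lem:cmprl} to $\calZ$ and using $\calZ_\RR=\calX_\RR$ from the first step yields $d=\dim_\RR\calX_\RR=\dim_\RR\calZ_\RR\le\dim_\CC\calZ\le\dim_\CC\calX=d$, so $\dim_\CC\calZ=d$. Then $\calZ$ is a closed subvariety of the irreducible $d$-dimensional variety $\calX$ possessing a top-dimensional component; since a proper irreducible closed subset of an irreducible variety strictly drops dimension, that component must be all of $\calX$, whence $\calZ=\calX$ and $\calX$ is defined over the reals. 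Notably (iii), unlike (i), needs only Lemma~\ref{Lem:cmprl} together with irreducibility, and avoids the base-change fact entirely.
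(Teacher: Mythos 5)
Your proposal is correct, but it routes part (i) differently from the paper. The paper proves (i) by reusing the complete-intersection machinery from Lemma~\ref{Lem:cmprl}: it embeds $\calX_\RR$ in a real complete intersection $\Van(f_1,\dots,f_k)$ with $k=n-\dim\calX_\RR$ (via Mumford), checks that the $f_i$ remain a regular sequence over $\CC$, and observes that since $\calX$ equals the Zariski closure of $\calX_\RR$ the $f_i$ lie in $\Id(\calX)$, giving $\dim\calX\le n-k=\dim\calX_\RR$, with Lemma~\ref{Lem:cmprl} supplying the reverse inequality. You instead prove the unconditional identity $\dim_\CC\overline{\calX_\RR}=\dim_\RR\calX_\RR$ by identifying $\Id(\overline{\calX_\RR})$ with the complexification $\Id(\calX_\RR)\otimes_\RR\CC$ (your splitting of $g+\iota h$ into real and imaginary parts is exactly right) and invoking base-change invariance of Krull dimension for finitely generated algebras; this is the standard commutative-algebra route, it is correct, and it buys you a stronger statement than the paper needs for (i). For (iii) your sandwich $d=\dim\calZ_\RR\le\dim\calZ\le\dim\calX=d$, using only Lemma~\ref{Lem:cmprl} applied to $\calZ$ (with the check $\calZ_\RR=\calX_\RR$) plus irreducibility, is actually more self-contained than the paper's one-line assertion that ``Zariski-closure preserves dimension'' --- which, since the closure is taken over a different field, tacitly rests on the same base-change fact you isolate in (i); your version of (iii) avoids it entirely, as you note. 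Your counterexample $\Van(X^3+X)=\{0,\iota,-\iota\}$ for (ii) is of the same kind as the paper's $\{1,\iota\}$ and works equally well.
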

\begin{proof}
(i) Let $k=n - \dim\calX_\RR$. By~\cite[section~1.1]{Mumford}, $\calX_\RR$ is contained in some complete intersection variety $\calX'=\Van (f_1,\dots, f_k)$, with $f_i\in\RR[X_1,\dots, X_n]$ a complete intersection. By an argument, analogous to the proof of Lemma~\ref{Lem:cmprl}, one sees that the $f_i$ are a complete intersection in $\CC [X_1,\dots, X_n]$ as well. Since the Zariski-closure of $\calX_\RR$ and $\calX$ are equal, it holds that $f_i\in \Id (\calX)$. Therefore, $\calX\subseteq \Van (f_1,\dots, f_k)$, which imples $\dim\calX\le n-k$, and by definition of $k$, as well $\dim\calX\le\dim \calX_\RR$. With Lemma~\ref{Lem:cmprl}, we obtain $\dim\calX_\RR = \dim\calX$, which was the statement to prove.\\
(ii) It suffices to give a counterexample: $\calX = \{1,i\}\subseteq \CC$. Alternatively (in a context where $\varnothing$ is not a variety) $\calX = \{(1,x)\;:\; x\in \CC\}\cup \{(i,x)\;:\; x\in \CC\}\subseteq \CC^2$.\\
(iii) By definition of dimension, Zariski-closure preserves dimension. Therefore, the closure $\overline{\calX_\RR}$ is a sub-variety of $\calX$, with $\dim \overline{\calX_\RR} = \dim\calX$. Since $\calX$ is irreducible, equality $\overline{\calX_\RR} = \calX$ must hold.
\end{proof}

\begin{Thm}\label{Thm:genreal}
Let $\calX\subseteq \CC^n$ be an irreducible variety which is observable over the reals, let $\calX_\RR$ be its real part. Let $P$ be an algebraic property. Assume that a generic $x\in\calX$ is $P$. Then, a generic $x\in \calX_\RR$ has property $P$ as well.
\end{Thm}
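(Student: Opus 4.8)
The plan is to reduce the real statement to the already-known complex genericity by tracking the failure locus of $P$ over $\CC$ and then bounding the size of its real points by a pure dimension count, the key input being Lemma~\ref{Lem:cmprl}.

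First I would fix notation for the failure locus. Since $P$ is an algebraic property and a generic $x\in\calX$ has property $P$, the set $\calF:=\{x\in\calX\;:\;x\text{ does not have }P\}$ is contained in a proper Zariski-closed subset $\calZ\subsetneq\calX$: on the irreducible variety $\calX$ the $P$-locus is constructible and generically satisfied, hence contains a dense Zariski-open set, so its complement lies in a proper closed subvariety. Because $\calX$ is irreducible and $\calZ\subsetneq\calX$ is a proper closed subvariety, we have $\dim\calZ<\dim\calX$.

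Next I would pass to the reals. The real failure locus satisfies $\calF\cap\RR^n\subseteq \calZ\cap\RR^n=:\calZ_\RR$, so it suffices to show that $\calZ_\RR$ is negligible inside $\calX_\RR$. Applying Lemma~\ref{Lem:cmprl} to the variety $\calZ$ gives $\dim\calZ_\RR\le\dim\calZ$, and combining with the previous inequality yields $\dim\calZ_\RR\le\dim\calZ<\dim\calX$. Finally, the hypothesis that $\calX$ is observable over the reals means precisely $\dim\calX=\dim\calX_\RR$, so $\dim\calZ_\RR<\dim\calX_\RR$.

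Therefore $\calZ_\RR$ is a real-algebraic subset of $\calX_\RR$ of strictly smaller dimension, hence a proper subset carrying zero $\dim\calX_\RR$-dimensional Hausdorff measure, since any real algebraic set of dimension $<d$ has vanishing $d$-dimensional Hausdorff measure. As $\calF\cap\RR^n\subseteq\calZ_\RR$, the set of $x\in\calX_\RR$ failing $P$ has Hausdorff measure zero with respect to the top-dimensional Hausdorff measure, which is exactly the assertion that a generic $x\in\calX_\RR$ has property $P$. I expect the one genuinely non-formal step to be the passage from $\dim\calZ<\dim\calX$ to $\dim\calZ_\RR<\dim\calX_\RR$: a priori the real points of a low-dimensional complex variety could be comparatively large, and it is precisely Lemma~\ref{Lem:cmprl} (bounding the real Krull dimension by the complex one) together with observability that rules this out. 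As a sanity check that the statement is non-vacuous, one may invoke Proposition~\ref{Prop:realdefobs}~(iii), which guarantees $\calX=\overline{\calX_\RR}$ and hence that $\calX_\RR$ is genuinely $\dim\calX$-dimensional. \eop
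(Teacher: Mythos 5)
Your proof is correct and takes essentially the same route as the paper's own: both arguments place the non-$P$ points of $\calX$ in a proper closed subvariety $\calZ\subsetneq\calX$, apply Lemma~\ref{Lem:cmprl} to get $\dim\calZ_\RR\le\dim\calZ\lneq\dim\calX=\dim\calX_\RR$ using observability over the reals, and conclude that the real failure locus lies in a proper subvariety of $\calX_\RR$. Your added remarks on Hausdorff measure and the non-vacuousness check via Proposition~\ref{Prop:realdefobs}~(iii) are harmless elaborations of what the paper leaves implicit.
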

\begin{proof}
Since $P$ is an algebraic property, the $P$ points of $\calX$ are contained in a proper sub-variety $\calZ\subseteq \calX$, with $\dim\calZ\lneq \dim\calX$. Since $\calX$ is observable over the reals, it holds $\dim\calX=\dim\calX_\RR$. By Lemma~\ref{Lem:cmprl}, $\dim\calZ_\RR\le \dim\calZ$. Putting all (in-)equalities together, one obtains $\dim\calZ_\RR\lneq \dim\calX_\RR$. Therefore, the $\calZ_\RR$ is a proper sub-variety of $\calX_\RR$; and the $P$ points of $\calX_\RR$ are contained in it - this proves the statement.
\end{proof}

\section{Results on Phase Retrieval}
\subsection{Properties of the Forward Map}\label{app:forwardmap}
In this section we will check that the technical assumptions hold in the case of the relevant examples. We start with introducing notation for two maps which relate the signal/measurement varieties to projection matrices:
\begin{Not}
In the following, we will denote
\begin{align*}
\Upsilon:& \CC^{r\times n}\times \CC^{r\times n}\rightarrow \calP_\rho(r),\quad (Q,S)\mapsto Q^\top S,\\
\Upsilon_\CC:& \CC^{r\times n}\times \CC^{r\times n}\rightarrow \calP_\CC(r),\quad (Q,S)\mapsto (Q^\top Q + S^\top S,Q^\top S - S^\top Q).
\end{align*}
\end{Not}
The maps $\Upsilon$ and $\Upsilon_\CC$ can be seen to be surjective; as an immediate consequence of this fact, we can relate genericity of projections to genericity of measurement matrices:

\begin{Prop}\label{Prop:genprojmat}
Let $P,Q\in \CC^{r\times n}$ be generic matrices. Then: 
\begin{description}
\item[(i)] $\Upsilon (P,Q)$ resp.~$\Upsilon_\CC (P,Q)$ are generic inside $\calP_\rho(r)$ resp.~$\calP_\CC(r)$
\item[(ii)] $\Upsilon (P,P)$ resp.~$\Upsilon_\CC (P,P^*)$ are generic Hermitian matrices inside $\calP_\rho(r)$ resp.~$\calP_\CC(r)$
\end{description}
\end{Prop}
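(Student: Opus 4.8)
The common engine for both parts is the elementary fact that a dominant morphism of irreducible varieties maps a generic point to a generic point. Suppose $f\colon\calX\to\calY$ is surjective with $\calX,\calY$ irreducible, and let $W\subseteq\calY$ be a proper Zariski-closed subset. Then $f^{-1}(W)$ is Zariski-closed (a morphism is continuous in the Zariski topology, cf.\ Definition~\ref{Def:morphism}) and proper, since $f^{-1}(W)=\calX$ would force $W\supseteq f(\calX)=\calY$ by surjectivity. Hence a generic point of $\calX$ lies outside $f^{-1}(W)$ and its image lies outside $W$; letting $W$ run over the closed sets that cut out any prescribed non-generic algebraic condition on $\calY$ shows that $f$ takes a generic point of $\calX$ to a generic point of $\calY$. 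For part (i) I would apply this with $f=\Upsilon$ and $f=\Upsilon_\CC$: the domain $\CC^{r\times n}\times\CC^{r\times n}$ is an affine space and hence irreducible, the targets $\calP_\rho(r)$ and $\calP_\CC(r)$ are irreducible in the prototypical cases (cf.\ Proposition~\ref{Prop:irrvarieties}), and surjectivity is exactly the fact recorded immediately before the statement. This yields genericity of $\Upsilon(P,Q)$ and $\Upsilon_\CC(P,Q)$ at once.

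For part (ii) the two arguments are no longer free: we restrict to the diagonal $P\mapsto\Upsilon(P,P)=P^\top P$ and to its conjugate analogue, and we must land generically among the Hermitian measurement matrices. The first step is to check that these restricted maps are still dominant. Over the algebraically closed field $\CC$ every symmetric matrix of rank at most $r$ factors as $P^\top P$ with $\rank P\le r$ (a complex-symmetric, Takagi-type factorization), so $P\mapsto P^\top P$ already surjects onto $\calP_\rho(r)$; moreover, writing $P=Q+\iota S$ with real $Q,S$, the identity $P^*P=(Q^\top Q+S^\top S)+\iota\,(Q^\top S-S^\top Q)$ exhibits the Hermitian operator $P^*P$ as $B+\iota C$ with $(B,C)\in\calP_\CC(r)$, which is what the notation $\Upsilon_\CC(P,P^*)$ abbreviates and what makes the word ``Hermitian'' precise. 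Applying the genericity principle of the first paragraph to these restricted, still-surjective maps gives the claim over $\CC$; if one wants the conclusion genuinely among real Hermitian matrices, as it is used in the proof of Theorem~\ref{Thm:genlinreal}, I would descend via the real-to-complex transfer of Theorem~\ref{Thm:genreal}, which applies because the targets are irreducible and observable over the reals (Proposition~\ref{Prop:irrvarieties}).

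The only step that needs real care is the dominance of the restricted maps in part (ii): one must confirm that cutting the domain down to the diagonal (resp.\ to the conjugate pair) does not collapse the image into a proper subvariety, i.e.\ that the symmetric and Hermitian factorizations above are genuinely surjective onto the correct locus, and that this locus has full dimension inside the target so that Theorem~\ref{Thm:genreal} is applicable. Part (i), by contrast, is formal once surjectivity and irreducibility are granted. I do not expect the rank bookkeeping to cause trouble, since the factorizations preserve rank and the target varieties are precisely the rank-stratified varieties whose irreducibility is assumed.
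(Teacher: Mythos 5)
Your proof is correct and follows essentially the same route as the paper's: both arguments rest on surjectivity of $\Upsilon$ and $\Upsilon_\CC$ (and, for part (ii), of their restrictions to the diagonal resp.\ conjugate pairs) together with irreducibility of source and target, so that generic points map to generic points. If anything, your preimage-based formulation is slightly more careful than the paper's one-line claim that the image of an open dense set is open dense in the image, and your Takagi-type factorization makes explicit the surjectivity of the restricted maps in (ii), which the paper only asserts parenthetically.
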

\begin{proof}
$P,Q\in \CC^{r\times n}$ being generic, by convention, is equivalent to choosing open dense $U_1,U_2\subseteq \CC^{r\times n}$. Since $\Upsilon$ and $\Upsilon_\CC$ are surjective (onto the Hermitian matrices in (ii)), and as algebraic maps continuous in the Zariski topology, the image of $U_1\times U_2$ (or $U_1\times U_1^*)$ will be open dense in the image as well.
\end{proof}

We now examine the signal and measurement varieties in more detail:
\begin{Prop}\label{Prop:irrvarieties}
Keep the notations of Section~\ref{sec:theorems.algpr}. For any $r\in\NN$, the varieties $\calP_\CC(r)$ and $\calP_\rho(r)$ are:
\begin{description}
\item[(i)] irreducible.
\item[(ii)] observable over the reals.
\item[(iii)] defined over the reals.
\end{description}
 In particular, this holds for $\calS_\CC=\calP_\CC(1)$ and $\calS_\rho=\calP_\rho(1)$ as well.
\end{Prop}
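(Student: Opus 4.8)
The plan is to realize each of $\calP_\rho(r)$ and $\calP_\CC(r)$ as the image of an affine space under a polynomial map with real coefficients, and then to read off all three properties from this single presentation. Concretely, $\calP_\rho(r)$ is the image of $\CC^{r\times n}$ under the morphism $\pi_\rho\colon P \mapsto P^\top P$, while $\calP_\CC(r)$ is the image of $\CC^{r\times n}\times\CC^{r\times n}$ under $\Upsilon_\CC$; both parametrizing maps are defined over $\RR$ (indeed over $\ZZ$), and both source spaces are affine spaces, hence irreducible.

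For (i), I would apply Proposition~\ref{Prop:irreducible} directly: the image of an irreducible variety under a morphism is irreducible, so $\calP_\rho(r)=\pi_\rho(\CC^{r\times n})$ and $\calP_\CC(r)=\Upsilon_\CC(\CC^{r\times n}\times\CC^{r\times n})$ are irreducible. Here one must first confirm that these image sets are genuinely Zariski closed, so that the word ``variety'' is justified and Proposition~\ref{Prop:irreducible} applies to the set itself rather than only to its closure. For $\calP_\rho(r)$ this is the statement that every symmetric matrix of rank at most $r$ factors as $P^\top P$ with $P\in\CC^{r\times n}$, which holds over $\CC$ because a symmetric bilinear form of rank $s\le r$ is diagonalizable to the identity on an $s$-dimensional subspace; an analogous rank-normal-form computation identifies $\calP_\CC(r)$ with a closed determinantal-type locus.

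For (iii), I would exploit that $\RR^N$ is Zariski dense in $\CC^N$ (a nonzero complex polynomial, split into real and imaginary parts, cannot vanish on all of $\RR^N$). Since $\pi_\rho$ and $\Upsilon_\CC$ are continuous in the Zariski topology and defined over $\RR$, they send real points to real points, and density pushes forward: writing $f$ for either map with affine source $\CC^N$, the set $f^{-1}\left(\overline{f(\RR^N)}\right)$ is Zariski closed and contains $\RR^N$, hence all of $\CC^N$, giving $f(\CC^N)\subseteq\overline{f(\RR^N)}$ and therefore, using closedness of $f(\CC^N)$, the equality $\overline{f(\RR^N)}=f(\CC^N)$. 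Because $f(\RR^N)$ consists of real points of the image variety, this shows each of $\calP_\rho(r)$ and $\calP_\CC(r)$ equals the complex Zariski closure of its real part, i.e.\ is defined over the reals. Property (ii) then follows immediately from Proposition~\ref{Prop:realdefobs}~(i), and the cases $\calS_\rho=\calP_\rho(1)$ and $\calS_\CC=\calP_\CC(1)$ are obtained by setting $r=1$.

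The main obstacle I anticipate is the bookkeeping of closedness in (i): surjectivity of the parametrizing maps onto the stated sets (as used implicitly around Proposition~\ref{Prop:genprojmat}) must be matched with a proof that these image sets are Zariski closed, since the image of a morphism is a priori only constructible. Once closedness is in hand, irreducibility and the density argument for being defined over the reals are formal, and observability over the reals is a direct citation of Proposition~\ref{Prop:realdefobs}.
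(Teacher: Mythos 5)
Your proof is correct, and for part (i) it coincides with the paper's argument: both realize $\calP_\rho(r)$ and $\calP_\CC(r)$ as images of irreducible affine spaces under polynomial parametrizations and invoke Proposition~\ref{Prop:irreducible}. For (ii) and (iii), however, you take a genuinely different route, in fact reversing the paper's logical order. The paper proves (ii) \emph{first}, by noting that the generic rank of the Jacobian of $\Upsilon$ resp.\ $\Upsilon_\CC$ is unchanged when the maps are considered over $\RR$ instead of $\CC$, which yields $\dim\calX_\RR=\dim\calX$ directly; it then deduces (iii) from (i) and (ii) via Proposition~\ref{Prop:realdefobs}~(iii), a step that needs irreducibility. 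You instead prove (iii) \emph{directly} from Zariski density of $\RR^N$ in $\CC^N$: since the parametrizations are defined over $\RR$, the closed set $f^{-1}\bigl(\overline{f(\RR^N)}\bigr)$ contains the dense subset $\RR^N$, hence all of $\CC^N$, giving $\calX=\overline{f(\RR^N)}\subseteq\overline{\calX_\RR}\subseteq\calX$; then (ii) drops out of Proposition~\ref{Prop:realdefobs}~(i). Your density argument is more elementary (no Jacobian or dimension theory) and requires no irreducibility for (iii), while the paper's Jacobian computation delivers the dimension equality $\dim\calX_\RR=\dim\calX$ head-on, which is the quantitative fact ultimately consumed by Theorem~\ref{Thm:genreal}. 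A further merit of your write-up is that you make explicit the closedness of the image sets, which the paper leaves implicit even though images of morphisms are a priori only constructible: for $\calP_\rho(r)$ this is the factorization $A=P^\top P$ of any complex symmetric matrix of rank at most $r$, and for $\calP_\CC(r)$ your ``analogous computation'' can be made concrete via the identity $B+\iota C=(Q-\iota S)^\top(Q+\iota S)$, which exhibits $\calP_\CC(r)$ as a linear-isomorphic copy of the determinantal variety of rank-$\le r$ matrices inside the symmetric-times-antisymmetric pairs. Your parametrization $P\mapsto P^\top P$ is also tautologically surjective onto $\calP_\rho(r)$, neatly sidestepping the paper's map $\Upsilon:(Q,S)\mapsto Q^\top S$, whose printed formula does not visibly land in $\calP_\rho(r)$ at all.
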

\begin{proof}
(i) For $\calP_\CC (r)$, irreducibility follows from surjectivity of $\Upsilon_\CC$, Proposition~\ref{Prop:irreducible} and irreducibility of complex affine space. Similarly, for $\calP_\rho (r)$, the statement follows from surjectivity of $\Upsilon$, and Proposition~\ref{Prop:irreducible}.\\
(ii) follows from considering the maps $\Upsilon_\CC$ and $\Upsilon$ over the reals, observing that the rank its Jacobian is not affected by this.\\
(iii) follows from (i), (ii) and Proposition~\ref{Prop:realdefobs}~(iii).\\
\end{proof}

\begin{Prop}\label{Prop:unrphi}
Keep the notations of Section~\ref{sec:theorems.signals} and~\ref{sec:theorems.measurem}. Assume that $\calS=\calS_\CC$ or $\calS_\rho$. Then $\phi_A$ is generically unramified for any $A\in\left(\left(\CC^{n\times n}\right)^\gamma\right)^k$. Furthermore, if $\calP$ contains $\calS$ (that is, all rank one signals), then $\phi$ is generically unramified.
\end{Prop}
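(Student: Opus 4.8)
The plan is to reduce the statement to the generic behaviour of the fibres of the relevant morphism, and then invoke generic flatness together with the fact that we work in characteristic zero. The guiding observation is that ``generically unramified'' only asks the ramification locus --- the set of points at which the fibre cardinality fails to be locally constant --- to lie inside a proper Zariski closed subset, so it suffices to produce a single dense open subset of the source over which the fibre cardinality is constant.

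First I would treat $\phi_A$ for a fixed $A$. Replacing the codomain by the closure $\calZ:=\overline{\phi_A(\calS)}$, which is irreducible because $\calS$ is irreducible (Proposition~\ref{Prop:irreducible}, using that $\calS_\rho,\calS_\CC$ are irreducible by Proposition~\ref{Prop:irrvarieties}), I regard $\phi_A:\calS\to\calZ$ as a dominant morphism of irreducible varieties. Generic flatness (Proposition~\ref{Prop:flatgen}) gives a dense open $V\subseteq\calZ$ over which $\phi_A$ is flat, and upper semicontinuity of fibre dimension (the theorem preceding Proposition~\ref{Prop:flatgen}) lets me shrink $V$ so that every fibre over $V$ has the same dimension $d=\dim\calS-\dim\calZ$. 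Since $\CC$ has characteristic zero the generic fibre is geometrically reduced, so after one further shrinking the fibres over $V$ are smooth of pure dimension $d$. On $\phi_A^{-1}(V)$ the fibre cardinality is then locally constant: it is the cardinality of the continuum when $d\ge 1$, and a fixed finite number when $d=0$ (smoothness of relative dimension $0$ means the map is \'etale over $V$). Hence every point of the dense open $\phi_A^{-1}(V)$ is unramified, its complement is a proper Zariski closed subset, and $\phi_A$ is generically unramified. This argument is insensitive to $A$; in the degenerate extreme $A=0$ the image $\calZ$ is a point and every fibre equals $\calS$, so the fibre cardinality is (constantly infinite and hence) locally constant as well.

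For the extended map $\phi:\calP^{(k)}\times\calS\to\calP^{(k)}\times\CC^k$ I would run the identical template on the source $\calP^{(k)}\times\calS$, which is irreducible as a finite product of irreducible varieties. Restricting the codomain to the irreducible closure of the image and applying generic flatness plus characteristic-zero generic reducedness again yields a dense open over which the fibre cardinality is locally constant, giving generic unramifiedness. The hypothesis $\calS\subseteq\calP$ enters to keep the map informative: because all rank-one signal matrices are then admissible measurements, the trace pairing $(Z,A)\mapsto\Tr(Z\cdot A)$ is non-degenerate on $\calS$ (one may pair $Z$ with $A=Z$), so $\phi$ is dominant onto a subvariety of the expected dimension and, once $k$ is large enough, is generically finite with generic fibre of cardinality one; concretely, Example~\ref{Ex:psidvsid} already exhibits such stably identifiable configurations using only rank-one measurements. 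Without this hypothesis the fibre cardinality could be constantly infinite for trivial reasons (for instance if the measurements were trace-orthogonal to $\calS$), in which case the conclusion still holds but is vacuous. As an alternative to the flatness argument one can exhibit one explicit unramified pair $(A,Z)$ and then conclude purely from the openness of the unramified locus in Theorem~\ref{Thm:openconds}~(ii).

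The main obstacle is conceptual rather than computational. It lies in treating the fibre cardinality correctly within the paper's topological notion of ``unramified'' --- in particular making sure that the positive-dimensional fibres occurring below the identifiability threshold still count as unramified via their constant (continuum) cardinality, rather than forcing finite fibres as the scheme-theoretic notion would --- and in isolating precisely where $\calS\subseteq\calP$ is needed, namely to force non-degeneracy of the trace pairing and thereby place the extended map in the generically finite regime to which the downstream local-to-global results (Proposition~\ref{Prop:opencert} and Corollary~\ref{Cor:genericprops}) are applied.
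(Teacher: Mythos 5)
Your proposal is correct, but it argues from first principles where the paper argues by citation. The paper's own proof is two lines: irreducibility of $\calS$ and $\calP^{(k)}\times\calS$ (Proposition~\ref{Prop:irrvarieties}) reduces the claim, via Proposition~\ref{Prop:opencert} and the openness of the unramified condition in Theorem~\ref{Thm:openconds}~(ii), to exhibiting a single point of the image over which the map is unramified, and the paper then simply asserts that ``a generic choice of signal and/or measurement will suffice.'' Your flagged alternative --- one explicit unramified point plus openness --- is therefore exactly the paper's route. Your main route instead proves the assertion the paper leaves implicit: generic flatness (Proposition~\ref{Prop:flatgen}), semicontinuity of fibre dimension, and characteristic-zero separability yield a dense open $V$ in the closure of the image over which the fibre cardinality is constant --- the continuum when $d\ge 1$, a fixed $\nu\in\NN$ when $d=0$ --- so every point of the dense open $\phi_A^{-1}(V)$ is unramified in the paper's cardinality-based sense. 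This buys genuine robustness the paper's phrasing glosses over: it works uniformly for all $A$ (including $A=0$) and for $k$ below the identifiability threshold, where fibres are positive-dimensional and the scheme-theoretic notion of unramified would fail outright; you are right to flag that distinction, and also right that under the paper's definition the hypothesis $\calS\subseteq\calP$ is not actually load-bearing for the conclusion (neither proof visibly uses it). One small overstatement: you cannot shrink $V$ so that the fibres are \emph{smooth} of pure dimension $d$, since $\calS_\rho$ and $\calS_\CC$ are cones and hence singular at the origin (consider the extreme $A=0$, where the fibre is all of $\calS$). This is harmless: for $d\ge 1$ you only need flatness to force every fibre component to have dimension $d\ge 1$, hence continuum cardinality, and for $d=0$ you can remove the image of the singular locus from $V$ before concluding the map is \'etale of constant degree there.
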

\begin{proof}
$\calS$ and $\calP^{(k)}\times \calS$ are irreducible by Proposition~\ref{Prop:irrvarieties}. By Proposition~\ref{Prop:opencert}, it therefore suffices to show that there exists $x$ in the image of $\phi_A$ or $\phi$ such that $x$ does not ramify - but a generic choice of signal and/or measurement will suffice.
\end{proof}

\subsection{Proofs of Main Theorems}\label{app:proofs}
This section contains the technical proofs for our main theorems, which are stated in a slightly longer version.

\begin{Thm}\label{Thm:algsignal}
For a fixed measurement regime $(A_1,\dots, A_k),$ consider the three cases
\begin{description}
   \item[(a)] A generic signal $Z\in\calS$ is not identifiable from $\phi_A(Z)$.
   \item[(b)] A generic, but not all signals $Z\in\calS$, are identifiable from $\phi_A(Z)$.
   \item[(c)] All signals $Z\in\calS$ are identifiable from $\phi_A(Z)$.
\end{description}
The three cases above are equivalent to
\begin{description}
   \item[(a)] No signal $Z\in\calS$ is perturbation-stably identifiable from $\phi_A(Z)$.
   \item[(b)] A generic, but not all signals $Z\in\calS$, are perturbation-stably identifiable from $\phi_A(Z)$.
   \item[(c)] All signals $Z\in\calS$ are perturbation-stably identifiable from $\phi_A(Z)$.
\end{description}
Any triple of cases above is furthermore equivalent to
\begin{description}
   \item[(a)] $\phi_A$ is not birational.
   \item[(b)] $\phi_A$ is birational, but not an isomorphism.
   \item[(c)] $\phi_A$ is an isomorphism.
\end{description}
In particular, the three cases, in either of the three formulations, are mutually exclusive and exhaustive.
\end{Thm}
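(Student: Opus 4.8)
The plan is to reduce all three trichotomies to a single numerical invariant --- the generic cardinality of the fibres of $\phi_A$ --- and then read off each formulation from it. First I would replace the map $\phi_A:\calS\to\CC^k$ by the dominant morphism $\phi_A:\calS\to\calY$, where $\calY:=\overline{\phi_A(\calS)}$ is the Zariski closure of the image; this is an irreducible variety by Proposition~\ref{Prop:irreducible}, and $\phi_A$ is generically unramified by hypothesis. Since the image is constructible and dense it contains a dense open subset, so after restricting to the dense open $V\subseteq\calY$ over which $\phi_A$ is surjective, Corollary~\ref{Cor:genericprops} supplies a well-defined generic fibre cardinality $\nu\in\NN$ together with a proper Zariski-closed exceptional locus outside of which every signal has exactly $\nu$ preimages and $\phi_A$ is unramified.

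By Remark~\ref{Rem:identalg}, a signal $Z$ is identifiable exactly when $\card{\phi_A^{-1}\phi_A(Z)}=1$. Applied to a generic $Z$, whose image is a generic point of $\calY$ because $\phi_A$ is dominant, this shows that a generic signal is identifiable if and only if $\nu=1$. Hence the first trichotomy is governed entirely by $\nu$: case (a) is $\nu\ge 2$; if $\nu=1$ then either some signal in the exceptional locus is non-identifiable, giving case (b), or all signals are identifiable, giving case (c). These three alternatives are visibly mutually exclusive and exhaustive.

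For the perturbation-stable formulation I would invoke Proposition~\ref{Prop:algcritsignal}: the perturbation-stably identifiable signals form a Zariski-open set $\Sigma\subseteq\calS$, and $Z\in\Sigma$ iff $Z$ is identifiable and $\phi_A$ is unramified over $Z$. If $\nu\ge 2$ then a generic signal is non-identifiable, so no Borel-open neighbourhood can consist of identifiable signals, whence $\Sigma=\emptyset$, matching case (a). If $\nu=1$ then a generic signal is both identifiable and (the exceptional locus being proper closed) unramified, so $\Sigma$ is nonempty and therefore dense in the irreducible $\calS$; it equals $\calS$ precisely when every signal is perturbation-stably identifiable. The third, birational formulation is read off from the glossary definitions: $\phi_A$ is birational iff its generic fibre is a single point, i.e. iff $\nu=1$, and it is an isomorphism onto its image iff it is moreover bijective and unramified everywhere. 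Matching the three situations $\nu\ge 2$, ``$\nu=1$ but not all identifiable'', and ``$\nu=1$ and all identifiable'' across the three formulations yields the claimed correspondence, while exclusivity and exhaustiveness follow from $\Sigma$ being open in an irreducible variety.

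The step I expect to be the crux is the passage at case (c): that ``all signals are identifiable'' already forces ``all signals are perturbation-stably identifiable'', equivalently that $\phi_A$ is then an isomorphism onto its image rather than a mere set-theoretic bijection. For a general generically unramified morphism, set-theoretic injectivity does not upgrade to everywhere-unramifiedness, so this implication is exactly where the argument must use the precise fibre-cardinality notion of unramifiedness together with the openness results of Theorem~\ref{Thm:openconds} and Proposition~\ref{Prop:opencert}: global bijectivity makes the fibre cardinality constantly one, whence $\phi_A$ is unramified at every point and $\Sigma=\calS$. Once this is secured, two of the three case-by-case equivalences are immediate and the third follows by exhaustion, completing the proof.
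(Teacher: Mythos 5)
Your proposal is correct and takes essentially the same route as the paper: both arguments rest on Proposition~\ref{Prop:algcritsignal} (equivalently, the generic fibre-cardinality machinery of Corollary~\ref{Cor:genericprops} and Proposition~\ref{Prop:opencert}), and both dispose of the crux you identify at case (c) in the same way --- under the paper's fibre-counting definition of unramifiedness, global bijectivity immediately forces unramifiedness everywhere (the paper simply takes $U=\calS$ as the neighbourhood), so no genuine upgrade argument is needed. Your explicit reduction to the single invariant $\nu$ merely unfolds what Proposition~\ref{Prop:algcritsignal} already packages, with the minor bonus that it makes exhaustiveness of the first trichotomy visible directly rather than via the birational formulation.
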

\begin{proof}
Mutual exclusivity and exhaustiveness of (a),(b),(c) follow from the third, algebraic formulation and elementary logic, once equivalence is established.

We prove equivalence of the first and second triple. Equivalence of (c) in the first and second triple follows from the fact that if all signals are identifiable, then all signals are perturbation-stably identifiable, since $\calS$ is an open neighborhood of any signal $Z\in\calS$. The converse follows from the fact that perturbation-stably identifiable signals are identifiable. Equivalence of (a) and (b) the first and second triple then follows from the assertion in Proposition~\ref{Prop:algcritsignal} that the perturbation-stably identifiable signals form a Zariski open subset of $\calS$, and the perturbation-stable signals are a subset of the identifiable signals.

We will now prove equivalence of the second and third triple. For that, note that if $\phi_A$ is birational if and only if there is $Z\in\calS$ with $\# \phi_A^{-1}\phi_A(Z) = 1,$ and an isomorphism if and only if there is no $Z\in\calS$ with $\# \phi_A^{-1}\phi_A(Z) \neq 1.$ Proposition~\ref{Prop:algcritsignal} then establishes the equivalence of the second and third triple.
\end{proof}

\begin{Thm}\label{Thm:algmeasure}
Assume that $\phi$ is generically unramified. Consider the three cases
\begin{description}
   \item[(a)] A generic measurement regime $A\in\calP^{k}$ is non-identifying.
   \item[(b)] A generic measurement regime $A\in\calP^{k}$ is incompletely identifying.
   \item[(c)] A generic measurement regime $A\in\calP^{k}$ is completely identifying.
\end{description}
The three cases above are equivalent to
\begin{description}
   \item[(a)] A generic measurement regime $A\in\calP^{k}$ is stably non-identifying. No measurement regime $A\in\calP^{k}$ is stably generically identifying.
   \item[(b)] A generic measurement regime $A\in\calP^{k}$ is stably incompletely identifying.
   \item[(c)] A generic measurement regime $A\in\calP^{k}$ is stably completely identifying.
\end{description}
Any triple of cases above is furthermore equivalent to
\begin{description}
   \item[(a)] $\phi$ is not birational.
   \item[(b)] $\phi$ is birational, and there is no open dense $U\subseteq \calP^{k}$ such that $\phi$ is an isomorphism on $U\times \calS$.
   \item[(c)] $\phi$ is birational, and there is an open dense $U\subseteq \calP^{k}$ such that $\phi$ is an isomorphism on $U\times \calS$.
\end{description}
In particular, the three cases, in either of the three formulations, are mutually exclusive and exhaustive.
\end{Thm}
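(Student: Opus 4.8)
\emph{Strategy.} I would mirror the proof of Theorem~\ref{Thm:algsignal}, now on the variety $\calX=\calP^{(k)}\times\calS$, which is irreducible by Proposition~\ref{Prop:irrvarieties} (products of irreducibles over $\CC$ are irreducible), with $\phi$ generically unramified by hypothesis, and use Proposition~\ref{Prop:opencond} together with Corollary~\ref{Cor:genericmeas} to carry the perturbation in $A$. The one structural fact I rely on is that $\phi$ retains $A$, so that every fibre satisfies $\phi^{-1}\phi(A,Z)=\{A\}\times\phi_A^{-1}\phi_A(Z)$, whence $\card{\phi^{-1}\phi(A,Z)}=\card{\phi_A^{-1}\phi_A(Z)}$. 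Since the third (algebraic) triple is a pure logical trichotomy on $\phi$ --- not birational, birational but with no isomorphism tube, birational with one --- mutual exclusivity and exhaustiveness in all three formulations follow once the equivalences are established.

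\emph{The birational dictionary.} Let $\calY\subseteq\calX$ be the set of perturbation-stably identifiable pairs, which is Zariski open by Proposition~\ref{Prop:algcritmatrices}, and let $\psi\colon\calX\to\calP^{(k)}$ be the projection. Combining Proposition~\ref{Prop:opencert} with Proposition~\ref{Prop:algcritmatrices}, I first record the key equivalence that $\phi$ is birational if and only if its generic fibre is a single point, if and only if $\calY\neq\emptyset$, if and only if a generic pair $(A,Z)$ is identifiable. I then use the two nested Zariski open sets $G:=\psi(\calY)$ (the stably generically identifying regimes, open by Proposition~\ref{Prop:opencond}(i)) and $C$ (the stably completely identifying regimes, open by Proposition~\ref{Prop:opencond}(ii)), with $C\subseteq G$, each either empty or dense by irreducibility of $\calP^{(k)}$. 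Because a fibre of $\phi$ lies inside $\{A\}\times\calS$, an isomorphism of $\phi$ on a tube $U\times\calS$ is exactly the condition $U\subseteq C$; thus the third triple reads off as $G=\emptyset$ (case (a)), $G$ dense with $C=\emptyset$ (case (b)), and $C$ dense (case (c)).

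\emph{Matching the first and second triples.} The second triple is, by definition, the same trichotomy of $(G,C)$, where the passage from ``there exists'' to ``a generic one'' is Corollary~\ref{Cor:genericmeas}. For the first triple one direction is trivial, since stably-$X$ implies $X$. In the converse direction, case (a) is immediate: if generic $A$ is non-identifying then a generic pair is non-identifiable, so $\calY=\emptyset$ and $G=\emptyset$; conversely $G=\emptyset$ makes the identifiable pairs non-dense, so by genericity over the irreducible base $\calP^{(k)}$ a generic $A$ fails to identify a generic signal, i.e.\ is non-identifying. The generically-identifying half of case (b) is equally direct, as generic identifiability of a generic $A$ makes a generic pair identifiable, so $\calY\neq\emptyset$ and $G\neq\emptyset$. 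Case (b) itself then follows by elimination once (a) and (c) are settled.

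\emph{The main obstacle.} The crux is case (c): that a generic completely identifying regime is already \emph{stably} completely identifying, i.e.\ that case (c) of the first triple forces $C\neq\emptyset$. The difficulty is that complete identifiability of $A$ only asserts that $\phi_A$ is an isomorphism --- stability of every signal with $A$ held fixed --- whereas $A\in C$ demands joint stability in $(A,Z)$, and the cardinality-based unramifiedness of the glossary does not transport one to the other. To close the gap I would argue as follows. Complete identifiability forces $\phi$ birational (all signals identifiable implies generic signal identifiable), so $\calY$ is a nonempty, hence dense, Zariski open subset of the irreducible $\calX$ and $\calY^{C}$ is proper closed; moreover over any completely identifying $A$ one has $\card{\phi^{-1}\phi(A,Z)}=\card{\phi_A^{-1}\phi_A(Z)}=1$ for all $Z$. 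Hence over such an $A$ the tube $\{A\}\times\calS$ meets $\calY^{C}$ only in the ramification locus, which at a fibre-cardinality-one point consists of accumulation points of the locus where $\card{\phi^{-1}\phi(x)}\ge 2$, and therefore lies in its closure $D:=\overline{\{x\in\calX\colon \card{\phi^{-1}\phi(x)}\ge 2\}}$. Now any point with $\card{\phi^{-1}\phi(x)}\ge 2$ projects to a measurement carrying a non-identifiable signal, so $\psi$ of that locus is disjoint from the dense locus of completely identifying $A$ and hence not dense; by continuity of $\psi$ one has $\psi(D)\subseteq\overline{\psi(\{x\colon \card{\phi^{-1}\phi(x)}\ge 2\})}$, again a proper closed subset of $\calP^{(k)}$. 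Thus a generic $A$ avoids $\psi(D)$, its tube avoids $D$ and hence $\calY^{C}$, so $\{A\}\times\calS\subseteq\calY$ and $A\in C$. Therefore $C\neq\emptyset$ and, being open, dense, which establishes case (c) and completes the three-way equivalence.
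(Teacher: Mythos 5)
Your proof is correct, and it follows the route the paper intends: the paper's entire proof of Theorem~\ref{Thm:algmeasure} is the single sentence that it is ``analogous'' to Theorem~\ref{Thm:algsignal}, carried out on $\calX=\calP^{(k)}\times\calS$ with Propositions~\ref{Prop:algcritmatrices} and~\ref{Prop:opencond} playing the role of Proposition~\ref{Prop:algcritsignal}. What you supply beyond that one-line analogy --- and it is a genuine supplement, not a mechanical transcription --- is the case~(c) crux: complete identifiability of a fixed $A$ is a statement about $\phi_A$ alone (stability in $Z$ only), whereas membership in the open set $C$ of stably completely identifying regimes demands joint stability in $(A,Z)$, an asymmetry the paper itself flags in the discussion following Theorem~\ref{Thm:algmeasureshort} but never resolves in print. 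Your closure argument --- a ramified point $x$ with $\card{\phi^{-1}\phi(x)}=1$ is an accumulation point of the locus where $\card{\phi^{-1}\phi(x)}\ge 2$, whose $\psi$-image is trapped in the proper closed complement of the completely identifying regimes, so a generic tube $\{A\}\times\calS$ avoids $\calY^{C}$ entirely and lands in $C$ --- closes this gap correctly. Two steps you use implicitly deserve a word: first, the locus $\{x:\card{\phi^{-1}\phi(x)}\ge 2\}$ is constructible (Chevalley plus semicontinuity of fibre dimension), which is what lets you identify its Euclidean closure (the glossary's Borel-neighborhood notion of ramification only gives Euclidean accumulation) with its Zariski closure, and lets you conclude $\psi$ of it has proper Zariski closure from the genericity hypothesis; second, in case~(a), ``non-identifying'' is defined via $\phi_A$-perturbation-stability of signals rather than via $\calY$, so deducing ``a generic $A$ is \emph{stably} non-identifying'' from $G=\varnothing$ requires passing through the fixed-$A$ trichotomy of Theorem~\ref{Thm:algsignal} on each tube together with the open dense image $\psi(W)$ of the open locus $W$ of unramified points of fibre cardinality $\nu\ge 2$ (or citing Corollary~\ref{Cor:genericmeas}~(iia), as you do). Both are routine at the paper's own level of rigor, so your write-up is, if anything, more complete than the original.
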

\begin{proof}
The proof is analogous to that of Theorem~\ref{Thm:algsignal}.
\end{proof} 

\end{document}